\newtheorem{remark}{Remark}[section]
\newtheorem{deft}{Definition}[section]
\newtheorem{thm}{Theorem}
\newtheorem{prop}{Proposition}[section]
\newtheorem{lemma}{Lemma}
\title{Dynamics in a kinetic model of oriented particles with phase transition}
\author{Amic Frouvelle\thanks{Institut de Mathématiques de Toulouse, CNRS -- UMR 5219, Université de Toulouse, F-31062 Toulouse, France, \href{mailto:amic.frouvelle@math.univ-toulouse.fr}{amic.frouvelle@math.univ-toulouse.fr} } 
\and Jian-Guo Liu\thanks{Department of Physics and Department of Mathematics, 
Duke University, Durham, NC 27707, USA, \href{mailto:Jian-Guo.Liu@duke.edu}{Jian-Guo.Liu@duke.edu}}}
\date{}
\begin{document}
\maketitle
 
\begin{abstract}
Motivated by a phenomenon of phase transition in a model of alignment of self-propelled particles, we obtain a kinetic mean-field equation which is nothing else than the Doi equation (also called Smoluchowski equation) with dipolar potential.

In a self-contained article, using only basic tools, we analyze the dynamics of this equation in any dimension. 
We first prove global well-posedness of this equation, starting with an initial condition in any Sobolev space.
We then compute all possible steady-states. 
There is a threshold for the noise parameter: over this threshold, the only equilibrium is the uniform distribution, and under this threshold, there is also a family of non-isotropic equilibria.

We give a rigorous prove of convergence of the solution to a steady-state as time goes to infinity. 
In particular we show that in the supercritical case, the only initial conditions leading to the uniform distribution in large time are those with vanishing momentum. 
For any positive value of the noise parameter, and any initial condition, we give rates of convergence towards equilibrium, exponentially for both supercritical and subcritical cases and algebraically for the critical case. 

\end{abstract}

\medskip
\textbf{Key words:} Doi-Onsager equation, Smoluchowski equation, nonlinear Fokker-Planck equation, dipolar potential, phase transition, LaSalle invariance principle, steady-states.

\medskip
\textbf{AMS subject classification:} 35K55, 35Q84, 35R01, 82B26, 82C26. 
\newpage

\section{Introduction}
Phase transition and large time behavior of large interacting oriented/rod-like particle systems and their mean field limits have shown to be interesting in many physical and biological complex systems. 
Examples are: paramagnetism to ferromagnetism phase transition near Curie temperature, nematic phase transition in liquid crystal or rod-shaped polymers, emerging of flocking dynamics near critical mass of self-propelled particles, etc. 

The dynamics on orientation for self-propelled particles proposed by Vicsek \emph{et al}~\cite{vicsek1995novel} to describe, for instance, fish schooling or bird flocking, present such a behavior in numerical simulations.
As the density increases (or as the noise decreases) and reaches a threshold one can observe strong correlations between the orientations of particles. 
The model is discrete in time and particles move at constant speed following their orientation. At each time step, the orientation of each particle is updated, replaced by the mean orientation of its neighbors, plus a noise term.

A way to provide a time-continuous version of this dynamical system, which allows to take a mean-field limit (and even a macroscopic limit), has been proposed by Degond and Motsch \cite{degond2008continuum}. 
Instead of replacing the orientation at the next time step, they introduce a parameter playing the role of a rate of relaxation towards this mean orientation. 
Unfortunately the mean-field limit of this model does not present phase transition.
In \cite{frouvelle2011continuous}, the first author of the present paper proved the robustness of the behavior of this model when this rate of relaxation depends on a local density. 
In particular, phase transition is still absent. 
However, when this parameter is set to be proportional to the local momentum of the neighboring particles, we will see that the model present a phenomenon of phase transition.
This phenomenon occurs on the orientation dynamics, so we will only consider here the spatial homogeneous dynamics. 
A current joint work with Pierre Degond is dedicated to the study of this model when we take in account the space variable. It is left aside in the present paper, and a specific paper \cite{degond2011macroscopic} is in progress on this subject. 

The particular model is described as follows: we have $N$ oriented particles, described by vectors~$\omega_1,\dots,\omega_N$ belonging to $\mathbb{S}$, the unit sphere of $\mathbb{R}^n$, and satisfying the following system of coupled stochastic differential equations, for $k\in \llbracket 1,N\rrbracket $:
\begin{align}
\mathrm d\omega_k &= (\mathrm{Id} - \omega_k \otimes \omega_k) (J_k \, \mathrm dt + \sqrt{2\sigma} \, \mathrm dB^k_t), 
\label{Part_Dyn_orient_sto} \\
J_k &= \frac1N\sum_{j=1}^N \omega_j. 
\label{Part_Dyn_J} 
\end{align}
The term $(\mathrm{Id} - \omega_k \otimes \omega_k)$ denotes the projection on the hyperplane orthogonal to $\omega_k$, and constrains the norm of $\omega_k$ to be constant. 
The terms $B^k_t$ stand for $N$ independent standard Brownian motions on $\mathbb{R}^n$, and then the stochastic term $(\mathrm{Id} - \omega_k \otimes \omega_k)\mathrm dB^k_t$ represents the contribution of a Brownian motion on the sphere $\mathbb{S}$ to the model. For more details on how to define Brownian motion on a Riemannian manifold, see \cite{hsu2002stochastic}. 

Without this stochastic term, equation \eqref{Part_Dyn_orient_sto} can be written
\begin{equation*}
\dot{\omega_k}=\nabla_\omega (\omega \cdot J_k)|_{\omega =\omega_k},
\end{equation*}
where $\nabla_\omega $ is the tangential gradient on the sphere (see the beginning of Section~\ref{preliminaries} for some useful formulas on the unit sphere). So the model can be understood as a relaxation towards a unit vector in the direction of $J_k$, subjected to a Brownian motion on the sphere with intensity $\sqrt{2\sigma}$.
The only difference with the model proposed in \cite{degond2008continuum} (in the spatial homogeneous case) is that $J_k$ is there replaced by $\nu \Omega_k$, where $\Omega_k$ is the unit vector in the direction of $J_k$ and the frequency of relaxation $\nu $ is constant (or dependent on the local density in \cite{frouvelle2011continuous}). 
One point to emphasize is that, in that model, the interaction cannot be seen as a sum of binary interactions, contrary to the model presented here.
Here the mean momentum $J_k$ does not depend on the index $k$ (but this is not true in the inhomogeneous case, where the mean is taken among the neighboring particles).

To simplify notations, we work with the uniform measure of total mass $1$ on the sphere $\mathbb{S}$.
We denote by $f^N:\mathbb{R}_+\times \mathbb{S} \to \mathbb{R}_+$ the probability density function (depending on time) associated to the position of one particle. Then, as the number~$N$ of particles tends to infinity, $f^N$ tends to a probability density function $f$ satisfying
\begin{equation}
\partial_t f = Q(f), 
\label{Doi_eq} 
\end{equation}
with
\begin{align}
Q(f) &= - \nabla_\omega \cdot ((\mathrm{Id} - \omega \otimes \omega) J[f] f) +\sigma \Delta_\omega f, 
\label{Q_def} \\
J[f] &= \int_{\mathbb{S}} \omega \, f(.,\omega) \, \mathrm d\omega. 
\label{J_def} 
\end{align}
In the model of \cite{degond2008continuum}, $J[f]$ is just replaced in \eqref{Q_def} by $\nu \,\Omega [f]$, where $\Omega [f]$ is the unit vector in the direction of $J[f]$. 

The first term of $Q(f)$ can be formally derived using a direct computation with the empirical distribution of particles.
And the diffusion part comes from Itô’s formula. 
A rigorous derivation of this mean-field limit is outside the scope of the present paper, and is linked with the so-called “propagation of chaos” property.
We refer to \cite{sznitman1991topics} for an introduction to this notion. 
The laboratory example given in this reference is the original model of McKean \cite{mckean1967propagation} which is a more general version of our system in~$\mathbb{R}^n$ instead of $\mathbb{S}$ (in that case, equation \eqref{Doi_eq} is called McKean-Vlasov equation).
The main point is to adapt the theory in the framework of stochastic analysis on Riemannian manifolds.

Notice that equation \eqref{Doi_eq} can be written in the form
\begin{equation*}
\partial_t f = \nabla \cdot (f\nabla \Psi) + \sigma \Delta f,
\end{equation*}
with
\begin{equation*}
\Psi (\omega,t)=-\omega \cdot J(t)=\int_{\mathbb{S}} K(\omega,\bar{\omega}) \, f(t,\bar{\omega}) \, \mathrm d\bar{\omega}.
\end{equation*}

This equation is known as Doi equation (or Doi-Onsager, Smoluchowski, or even nonlinear Fokker-Planck equation) and was introduced by Doi \cite{doi1981molecular} as gradient flow equation for the Onsager free energy functional:
\begin{equation}
\mathcal F(f)=\sigma \int_\mathbb{S} f(.,\omega) \ln f(.,\omega) \mathrm d\omega + \tfrac12\int_{\mathbb{S}\times \mathbb{S}} K(\omega,\bar{\omega}) f(.,\omega)\, f(.,\bar{\omega}) \,\mathrm d\omega \mathrm d\bar{\omega}.
\label{Onsager_free_energy}
\end{equation}
This functional was proposed by Onsager \cite{onsager1949effects} to describe the equilibrium states of suspensions of rod-like polymers. They are given by the critical points of this functional. 

Defining the chemical potential $\mu $ as the first order variation of $\mathcal F(f)$ under the constraint $\int_{\mathbb{S}}f=1$, we get $\mu =\sigma \ln f + \Psi $, and the Doi equation becomes 
\begin{equation*}
\partial_t f = \nabla \cdot (f \nabla \mu).
\end{equation*}

In the original work of Onsager, the kernel has the form $K(\omega,\bar{\omega})=|\omega \times \bar{\omega}|$, but there is another form, introduced later by Maier and Saupe \cite{maier1958eine}, which leads to similar quantitative results: $K(\omega,\bar{\omega})=-(\omega \cdot \bar{\omega})^2$. 
In our case, the potential given by $K(\omega,\bar{\omega})=-\omega \cdot \bar{\omega}$ is called the dipolar potential. 
This is a case where the arrow of the orientational direction has to be taken in account.

One of the interesting behavior of the Doi-Onsager equation is the phase transition bifurcation. 
This is indeed easy to see (here with the dipolar potential) from the following linearization around the uniform distribution: if $f$ is a probability density function, solution of \eqref{Doi_eq}, we write $f=1+g$, so $\int_\mathbb{S} g\,\mathrm d\omega =0$ and we can get the equation for $g$. 
We multiply the equation by $\omega $ and integrate, using the formula~$\int_\mathbb{S} \omega \otimes \omega \, \mathrm d\omega = \frac1n\,\mathrm{Id}$ (this is a matrix with trace one and commuting with any rotation) and the tools in the beginning of Section \ref{preliminaries}. 
We get the linearized equation for $g$ and $J[g]$:
\begin{gather*}
 \partial_t g = \sigma \Delta_\omega g + (n-1)\,\omega \cdot J[g] + O(g^2),\\
 \frac{\mathrm d}{\mathrm dt} J[g] = (n-1)\left(\dfrac{1}n-\sigma \right)J[g] + O(g^2).
\end{gather*}

Therefore if we take the linear part of this system, we can solve the second equation directly, and the first one becomes the heat equation with a known source term.
Finally, around the constant state, the linearized Doi equation is stable if $\sigma \geqslant \frac1n$, and unstable if $\sigma <\frac1n$. 
We expect to find another kind of equilibrium in this regime. The work has been done in \cite{fatkullin2005critical} for the dimension $n=3$, the distribution obtained is called Fisher-Von Mises distribution \cite{watson1982distributions}. As far as we know, this is the only work dealing with the dipolar potential alone.

A lot of work has been done to study the equilibrium states for the Maier-Saupe potential, and in particular to show the axial symmetry of these steady states.
A complete classification has been achieved for the two and three-dimensional cases in~\cite{liu2005axial} (see also~\cite{zhang2007stable}, including the analysis of stability under a weak external shear flow). 
The interesting behavior, besides the phase transition, is the hysteresis phenomenon: before a first threshold, only the anisotropic equilibrium is stable, then both anisotropic and uniform equilibria are stable, and after a second threshold, the only equilibrium is the uniform distribution.
In the case of a coupling between the Maier-Saupe and the dipolar, it is shown in \cite{zhou2007characterization} that the only stable equilibrium states are axially symmetric.
To our knowledge, less work has been done to study the dynamics of the Doi-Onsager equation, in particular the rate at which the solution converges to a steady-state.

The purpose of this paper is to give a rigorous proof of the phase transition in any dimension for the dipolar potential, and study the large time dynamics and the convergence rates towards equilibrium states.

In Section \ref{general_results}, we give some general results concerning equation \eqref{Doi_eq}. 
We provide a self-contained proof for existence and uniqueness of a solution with initial nonnegative condition in any Sobolev space. 
We show that the solution is instantaneously positive and in any Sobolev space (and actually analytic in the space variable), and we obtain uniform bounds in time for each Sobolev norm.

In Section \ref{using_free_energy}, we use the Onsager free energy (decreasing in time) to analyze the general behavior of the solution as time goes to infinity. 
We prove a kind of LaSalle principle, implying that the solution converges, in the $\omega $-limit sense, to a given set of equilibria. 
We determine all the steady states, and see that the value~$\frac1n$ is indeed a threshold for the noise parameter $\sigma $. 
Over this threshold, the only equilibrium is the uniform distribution. 
When $\sigma <\frac1n$, two kinds of equilibria exist: the uniform distribution, and a family of non-isotropic distributions (called Fischer-Von Mises distributions), with a concentration parameter $\kappa $ depending on $\sigma $.
 
Finally, in Section \ref{convergence}, we show that the solution converges strongly to a given equilibrium. We first obtain a new conservation relation, which plays the role of an entropy when $\sigma \geqslant \frac1n$, and shows a global convergence to the uniform distribution with rate proportional to $\sigma -\frac1n$. Then we prove that, in the supercritical case $\sigma <\frac1n$, the solution converges to a non-isotropic equilibrium if and only if the initial drift velocity $|J[f_0]|$ is non-zero (if it is zero, the equation reduces to the heat equation, and the solution converges exponentially fast to the uniform distribution). We prove in that case that the convergence to this steady-state is exponential in time, and we give the asymptotic rate of convergence. Finally, in the critical case $\sigma =\frac1n$, we show that the speed of convergence to the uniform distribution is algebraic (more precisely the decay in any Sobolev norm is at least $\frac{C}{\sqrt t}$).

\section{General results}
\label{general_results}
\subsection{Preliminaries: some results on the unit sphere}
\label{preliminaries}

This subsection consists essentially in a main lemma, allowing to perform some estimates on the norm of integrals of the form $\int_\mathbb{S} g\nabla_\omega h $, where $h$ and $g$ are real functions with mean zero.

But let us start by some useful formulas.

For $V$ a constant vector in $\mathbb{R}^n$, we have:
\begin{gather*}
 \nabla_\omega (\omega \cdot V) = (\mathrm{Id} - \omega \otimes \omega) V\\
 \nabla_\omega \cdot ((\mathrm{Id} - \omega \otimes \omega)V)= -(n-1)\,\omega \cdot V,
\end{gather*}
where $\nabla_\omega $ (resp. $\nabla_\omega \cdot $) stands for the tangential gradient (resp. the divergence) on the unit sphere. When no confusion is possible, we will just use the notation $\nabla $.

Then, taking the dot product with a given tangent vector field $A$ or multiplying by a regular function $f$ and integrating by parts, we get
\begin{gather*}
 \int_\mathbb{S} \omega \,\nabla_\omega \cdot A(\omega) \mathrm d\omega = -\int_\mathbb{S} A(\omega) \mathrm d\omega \\
 \int_\mathbb{S} \nabla_\omega f \mathrm d\omega = (n-1)\int_\mathbb{S} \omega f \mathrm d\omega \label{grad_flux}.
\end{gather*}

We then introduce some notations.
We denote by $\dot{H}^s(\mathbb{S})$ the subspace composed of mean zero functions of the Sobolev space $H^s(\mathbb{S})$.
This is a Hilbert space, associated to the inner product $\langle g,h\rangle_{\dot{H}^s}^2= \langle (-\Delta)^sg,h\rangle $, where $\Delta $ is the Laplace-Beltrami operator on the sphere. 
This has also a sense for any $s\in \mathbb{R}$ by spectral decomposition of this operator. We will denote by $\|\cdot \|_{\dot{H}^s}$ the norm on this Hilbert space.

We then define the so-called conformal Laplacian $\widetilde{\Delta}_{n-1}$ on the sphere (see \cite{beckner1993sharp}) which plays a role in some Sobolev inequalities. 
This is a positive definite operator (pseudodifferential operator of degree $n-1$, mapping continuously $\dot{H}^s(\mathbb{S})$ into~$\dot{H}^{s-n+1}(\mathbb{S})$, which is a differential operator when $n$ is odd) given by 
\begin{equation}
\label{conformal_laplacian}
\widetilde{\Delta}_{n-1}=
\begin{cases}
\displaystyle\prod_{ 0\leqslant j\leqslant \frac{n-3}2}\left(-\Delta +j(n-j-2)\right) & \text{ for } n \text{ odd,}\\
\left(-\Delta +(\frac n2-1)^2\right)^{\frac12}\displaystyle\prod_{ 0\leqslant j\leqslant \frac n2-2}\left(-\Delta +j(n-j-2)\right) & \text{ for } n \text{ even.}
\end{cases}
\end{equation}
Equivalently, it can be also defined by 
\begin{equation}
\label{spectral_conformal_laplacian}
\widetilde{\Delta}_{n-1}\, Y_\ell = \ell (\ell +1)\dots (\ell +n-2)Y_\ell \text{ for any spherical harmonic }Y_\ell \text{ of degree }\ell.
\end{equation}
% which does not give a direct feeling that it is a differential operator when $n$ is odd, but which is more useful for computations.

Here is the main lemma.

\begin{lemma}
\label{main_lemma}
Estimates on the sphere.
\begin{enumerate}
\item If $h$ in $\dot{H}^{-s+1}(\mathbb{S})$ and $g$ in $\dot{H}^{s}(\mathbb{S})$, the following integral is well defined and we have
\begin{equation}
\left|\int_\mathbb{S} g\nabla h\right|\leqslant C\|g\|_{\dot{H}^s}\|h\|_{\dot{H}^{-s+1}}
\end{equation}
where the constant $C$ depends only on $s$ and $n$.
\item
We have the following estimation, for any $g \in \dot{H}^{s+1}(\mathbb{S})$:
\begin{equation}
\left|\int_{\mathbb{S}}g \nabla (-\Delta)^sg\right|\leqslant C\|g\|^2_{\dot{H}^{s}},
\label{commutator_estimate}
\end{equation}
where the constant $C$ depends only on $s$ and $n$.
\item We have the following identity, for any $g \in \dot{H}^{-\frac{n-3}2}$:
\begin{equation}
\int_{\mathbb{S}}g \nabla \widetilde{\Delta}_{n-1}^{-1}g=0
\label{exact_commutator}
\end{equation}
\end{enumerate}
\end{lemma}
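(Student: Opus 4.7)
The unifying idea is to decompose everything into spherical harmonics: the operators $(-\Delta)^s$, $\widetilde{\Delta}_{n-1}$, and the homogeneous Sobolev norms are all diagonal in this basis, with eigenvalues $\lambda_\ell = \ell(\ell+n-2)$ and $\mu_\ell = \ell(\ell+1)\cdots(\ell+n-2)$ on the degree-$\ell$ subspace. The central algebraic ingredient, which I would derive first, is the identity
\[
\nabla Y_\ell \cdot e_i = (\ell+n-2)\, P^{(i)}_{\ell-1}(Y_\ell) \;-\; \ell\, P^{(i)}_{\ell+1}(Y_\ell),
\]
where $P^{(i)}_k(f)$ denotes the degree-$k$ spherical-harmonic component of $\omega_i f$. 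One obtains it from the Leibniz rule $\Delta(\omega_i Y_\ell)= \omega_i \Delta Y_\ell + Y_\ell \Delta \omega_i + 2\nabla \omega_i \cdot \nabla Y_\ell$, the eigenvalue relations, and the classical fact that $\omega_i Y_\ell$ splits into spherical harmonics of degrees $\ell\pm 1$ only. As a consequence, if $g=\sum_\ell g_\ell$ and $h=\sum_\ell h_\ell$, then $\int g_\ell \nabla h_{\ell'}\cdot e_i$ vanishes unless $\ell = \ell'\pm 1$, and every surviving contribution reduces to a scalar moment $\int \omega_i g_\ell h_{\ell\pm 1}$.

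For part~(1), inserting the decompositions gives
\[
\int_\mathbb{S} g\, \nabla h \cdot e_i = \sum_\ell \Bigl[(\ell+n-1)\int_\mathbb{S} \omega_i g_\ell h_{\ell+1} \;-\; \ell \int_\mathbb{S} \omega_i g_{\ell+1} h_\ell\Bigr].
\]
Each moment is bounded by $\|g_\ell\|_{L^2}\|h_{\ell+1}\|_{L^2}$ (and the symmetric analogue) because $|\omega_i|\leqslant 1$. A Cauchy--Schwarz inequality with weights $\lambda_\ell^{s/2}$ on the $g$-side and $\lambda_{\ell+1}^{(1-s)/2}$ on the $h$-side delivers $\|g\|_{\dot H^s}\|h\|_{\dot H^{-s+1}}$; the only point to check is that the remaining scalar factor $(\ell+n-1)\lambda_\ell^{-s/2}\lambda_{\ell+1}^{(s-1)/2}$ stays uniformly bounded in $\ell$, which reduces to the asymptotics $\lambda_\ell\sim \ell^2$.

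For part~(2), specialising to $h=(-\Delta)^s g$ (so that $h_\ell=\lambda_\ell^s g_\ell$) turns the previous sum into $\sum_\ell c_\ell \int \omega_i g_\ell g_{\ell+1}$ with
\[
c_\ell = \lambda_{\ell+1}^s(\ell+n-1) - \lambda_\ell^s\, \ell.
\]
The naive bound $|c_\ell|\lesssim \ell^{2s+1}$ is too weak -- it would fail to sum against the $\dot H^s$ norm -- but the leading-order term actually cancels. Rewriting $c_\ell = (\lambda_{\ell+1}^s-\lambda_\ell^s)(\ell+n-1)+(n-1)\lambda_\ell^s$ and using $\lambda_{\ell+1}-\lambda_\ell = 2\ell+n-1$ yields $|c_\ell|\leqslant C\,\lambda_\ell^{s/2}\lambda_{\ell+1}^{s/2}$, and the inequality $2ab\leqslant a^2+b^2$ applied pairwise gives the desired control by $\|g\|^2_{\dot H^s}$. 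This cancellation is the main technical obstacle.

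For part~(3), the same computation with the eigenvalues $\mu_\ell$ of $\widetilde{\Delta}_{n-1}$ replaces $c_\ell$ by
\[
\tilde c_\ell = \mu_{\ell+1}^{-1}(\ell+n-1) - \mu_\ell^{-1}\,\ell.
\]
Direct substitution with $\mu_\ell = \ell(\ell+1)\cdots(\ell+n-2)$ shows that $\mu_{\ell+1}^{-1}(\ell+n-1)= 1/[(\ell+1)(\ell+2)\cdots(\ell+n-2)] = \mu_\ell^{-1}\ell$, so $\tilde c_\ell\equiv 0$ and the identity is exact. In retrospect, the product form of $\mu_\ell$ is precisely what is needed so that every pair's contribution telescopes away.
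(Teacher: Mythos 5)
Your proposal is correct, and while it follows the same overall strategy as the paper (expansion in spherical harmonics, reduction of $\int_{\mathbb S} g\,\nabla h$ to a sum of couplings between adjacent degrees with coefficients $(\ell+n-1)$ and $\ell$, then the boundedness/cancellation analysis of those coefficients for each of the three parts), the way you obtain the key structural identity is genuinely different and more economical. The paper constructs an explicit orthonormal basis $Y_{\ell,m}^k$ by separation of variables, identifies the radial factors as Gegenbauer polynomials, and extracts the tridiagonal formula from the three classical recurrence and derivative identities for those polynomials. You instead get the same formula, $e_i\cdot\nabla_\omega Y_\ell=(\ell+n-2)P^{(i)}_{\ell-1}(Y_\ell)-\ell\,P^{(i)}_{\ell+1}(Y_\ell)$, coordinate-free, from $\Delta_\omega(\omega_iY_\ell)=\omega_i\Delta_\omega Y_\ell+Y_\ell\Delta_\omega\omega_i+2\,e_i\cdot\nabla_\omega Y_\ell$ together with the eigenvalue relations and the fact that $\omega_iY_\ell$ splits into degrees $\ell\pm1$; one checks that the resulting coefficients $\tfrac12(\lambda_\ell+n-1-\lambda_{\ell\pm1})$ are exactly $-\ell$ and $\ell+n-2$. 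This bypasses the entire Gegenbauer machinery, at the price of not producing the explicit basis (which the paper reuses elsewhere, e.g.\ in the Gevrey-regularity appendix). Your subsequent estimates are sound: in part (1) the weight $(\ell+n-1)\lambda_\ell^{-s/2}\lambda_{\ell+1}^{(s-1)/2}$ is indeed uniformly bounded; in part (2) your decomposition $c_\ell=(\lambda_{\ell+1}^s-\lambda_\ell^s)(\ell+n-1)+(n-1)\lambda_\ell^s$ with $\lambda_{\ell+1}-\lambda_\ell=2\ell+n-1$ captures precisely the cancellation the paper expresses by saying that $(\ell+n-1)(\lambda_{\ell+1}/\lambda_\ell)^{s/2}-\ell(\lambda_\ell/\lambda_{\ell+1})^{s/2}$ tends to $(n-1)+2s$; and in part (3) the telescoping $(\ell+n-1)\mu_{\ell+1}^{-1}=\ell\,\mu_\ell^{-1}$ is exactly the computation in the paper.
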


Let us make some remarks on these statements. 
The first one is just expressing the fact that the gradient operator (or more precisely any of its component $e\cdot \nabla $ for a given unit vector $e$) is well defined as an operator sending $\dot{H}^{-s+1}(\mathbb{S})$ continuously into $\dot{H}^{-s}(\mathbb{S})$ for any $s$.

The second one is actually a commutator estimate. 
It is equivalent to the fact that for any given unit vector $e$, and for any $g,h \in \dot{H}^{s+1}$ we have 
\begin{equation*}
\left|\int_{\mathbb{S}}g e\cdot \nabla (-\Delta)^sh+h e\cdot \nabla (-\Delta)^sg\right|\leqslant \widetilde C\|g\|_{\dot{H}^{s}}\|h\|_{\dot{H}^{s}}.
\end{equation*} 
Defining the operator $F$ by 
\begin{equation*}
Fg=e\cdot \nabla (-\Delta)^sg-(-\Delta)^s\nabla \cdot ((\mathrm{Id} - \omega \otimes \omega)eg)
\end{equation*} 
and integrating by parts, this inequality becomes $\left|\int_{\mathbb{S}}h\,Fg\right|\leqslant \widetilde C\|g\|_{\dot{H}^{s}}\|h\|_{\dot{H}^{s}}$. 
In other words, $F$ sends $\dot{H}^{s}(\mathbb{S})$ continuously into $\dot{H}^{-s}(\mathbb{S})$ for any $s$. 

So since $F=[e\cdot \nabla,(-\Delta)^s]+(n-1)(-\Delta)^se\cdot \omega $, this second statement \eqref{commutator_estimate} expresses that the commutator $[\nabla,(-\Delta)^s]$ is an operator of degree $2s$.

With the same point of view, the last equality \eqref{exact_commutator} gives an exact computation of the commutator of the gradient and the inverse of conformal Laplacian.

This is just saying that $[\nabla,\widetilde{\Delta}_{n-1}^{-1}]=-(n-1)\widetilde{\Delta}_{n-1}^{-1}\omega $, or, multiplying left and right by~$\widetilde{\Delta}_{n-1}$, that $[\nabla,\widetilde{\Delta}_{n-1}]=(n-1)\omega \widetilde{\Delta}_{n-1}$.

The proof of this lemma relies on some computations on spherical harmonics, and is given in Appendix \ref{appendix_harmonics}.

\subsection{Existence, uniqueness, positivity, regularity.}

We present here a self-contained proof of well-posedness of the problem \eqref{Doi_eq}, working in any Sobolev space for the initial condition. 
Some analogous claims are given in~\cite{constantin2005dissipativity}, without proof, starting for a continuous nonnegative function. 
They are based on arguments of \cite{cao2000gevrey}, stating that the Galerkin method based on spherical harmonics converges (exponentially fast) to the unique solution. 
They are weaker with respect to the initial conditions and the positivity, but stronger for the regularity of the solution (analytic in space). As a remark we will give the same regularity results, and prove it in Appendix \ref{analyticity}.

\begin{deft}\label{definition_weak_solution}
Weak solution.

For $T>0$, the function $f \in L^2((0,T),H^{s+1}(\mathbb{S}))\cap H^1((0,T),H^{s-1}(\mathbb{S}))$ is said to be a weak solution of~\eqref{Doi_eq} if for almost all $t \in [0,T]$, we have for all $h \in H^{-s+1}(\mathbb{S})$
\begin{equation}
 \langle \partial_t f,h\rangle =-\sigma \langle \nabla_\omega f,\nabla_\omega h\rangle + \langle f,J[f]\cdot \nabla_\omega h\rangle,
\label{weak_solution}
\end{equation}
where $\langle \cdot,\cdot \rangle $ is the usual duality product for distributions on the sphere $\mathbb{S}$.
\end{deft}
Since it is sometimes more convenient to work with mean zero functions (in order to use the main lemma of the previous subsection), we reformulate this problem in another framework. 
We set $f=1+g$ so that $f$ is a weak solution if and only if~$g \in L^2((0,T),\dot{H}^{s+1}(\mathbb{S}))\cap H^1((0,T),\dot H^{s-1}(\mathbb{S}))$ with, for almost all $t \in [0,T]$, and for all~$h \in \dot H^{-s+1}(\mathbb{S})$, 
\begin{equation}
 \langle \partial_t g,h\rangle =-\sigma \langle \nabla_\omega g,\nabla_\omega h\rangle + (n-1)J[g]\cdot J[h] + \langle g,J[g]\cdot \nabla_\omega h\rangle.
\label{weak_solution_meanzero}
\end{equation}

That makes sense to look for a weak solution with prescribed initial condition in~$H^s$, since it always belongs to~$C([0,T],H^s(\mathbb{S}))$, as stated by the following proposition.

\begin{prop}
\label{embedding_IC}
If $g \in L^2((0,T),\dot{H}^{s+1}(\mathbb{S}))\cap H^1((0,T),\dot H^{s-1}(\mathbb{S}))$, then, up to redefining it on a set of measure zero, it belongs to~$C([0,T],\dot H^s(\mathbb{S}))$, and we have
\begin{equation*}
\max_{[0,T]}\|u(t)\|^2_{\dot H^s}\leqslant C\int_0^T\|u\|^2_{\dot H^{s+1}}+\|\partial_tu\|^2_{\dot H^{s-1}},
\end{equation*}
where the constant $C$ depends only on $T$.
\end{prop}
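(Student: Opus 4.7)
The plan is to prove this via spectral decomposition into spherical harmonics, which reduces the statement to a classical Lions--Magenes type embedding. Write $g(t)=\sum_{\ell\geqslant 1}g_\ell(t)$, where $g_\ell(t)$ is the orthogonal projection of $g(t)$ onto the (finite-dimensional) eigenspace of $-\Delta$ associated with the eigenvalue $\lambda_\ell=\ell(\ell+n-2)$. Then every Sobolev norm admits the explicit expression $\|g\|_{\dot H^\sigma}^2=\sum_\ell \lambda_\ell^\sigma \|g_\ell\|_{L^2}^2$, and the pairing between $\dot H^{s-1}$ and $\dot H^{s+1}$ obeys the Cauchy--Schwarz bound
\begin{equation*}
|\langle u,v\rangle_{\dot H^s}|=\Bigl|\sum_\ell \lambda_\ell^s\langle u_\ell,v_\ell\rangle_{L^2}\Bigr|\leqslant \|u\|_{\dot H^{s-1}}\|v\|_{\dot H^{s+1}}.
\end{equation*}

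Next, I would work with finite truncations $g^N=\sum_{\ell=1}^N g_\ell$. Because each $g^N$ lives in a finite-dimensional subspace where all Sobolev norms are equivalent, the hypothesis forces $g^N\in H^1((0,T),\dot H^\infty)\hookrightarrow C([0,T],\dot H^\infty)$, so the map $t\mapsto \|g^N(t)\|_{\dot H^s}^2$ is absolutely continuous with
\begin{equation*}
\|g^N(t_2)\|_{\dot H^s}^2-\|g^N(t_1)\|_{\dot H^s}^2=2\int_{t_1}^{t_2}\langle \partial_t g^N(\tau),g^N(\tau)\rangle_{\dot H^s}\,\mathrm d\tau.
\end{equation*}
Combining this identity with the Cauchy--Schwarz bound above and $2ab\leqslant a^2+b^2$ yields, for all $t_1,t_2\in[0,T]$,
\begin{equation*}
\bigl|\|g^N(t_2)\|_{\dot H^s}^2-\|g^N(t_1)\|_{\dot H^s}^2\bigr|\leqslant \int_0^T\|\partial_t g^N\|_{\dot H^{s-1}}^2+\|g^N\|_{\dot H^{s+1}}^2\,\mathrm d\tau.
\end{equation*}

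Then I would choose $t_1=t_0$ by the mean value theorem for integrals: since $\dot H^{s+1}\hookrightarrow \dot H^s$ (the embedding constant being $1/\sqrt{\lambda_1}=1/\sqrt{n-1}$ thanks to the mean-zero condition), there exists $t_0\in[0,T]$ with $\|g^N(t_0)\|_{\dot H^s}^2\leqslant \frac{1}{T(n-1)}\int_0^T\|g^N\|_{\dot H^{s+1}}^2\,\mathrm d\tau$. Taking the supremum over $t_2\in[0,T]$ gives
\begin{equation*}
\max_{[0,T]}\|g^N(t)\|_{\dot H^s}^2\leqslant C\int_0^T\|g^N\|_{\dot H^{s+1}}^2+\|\partial_t g^N\|_{\dot H^{s-1}}^2\,\mathrm d\tau,
\end{equation*}
with $C$ depending only on $T$ (and $n$). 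Applying the same estimate to the differences $g^N-g^M$ shows that $(g^N)$ is Cauchy in $C([0,T],\dot H^s)$; since $g^N\to g$ in $L^2((0,T),\dot H^{s+1})$, the limit must coincide almost everywhere with $g$, which gives the continuous representative, and passing to the limit $N\to\infty$ in the displayed inequality yields the claim. The only mildly delicate point is justifying the chain rule identity at the level of truncations and checking that $\partial_t g^N$ and $g^N$ inherit the correct regularity from $\partial_t g$ and $g$; both follow immediately because the projection onto harmonics of degree $\leqslant N$ commutes with $\partial_t$ and with any power of $-\Delta$.
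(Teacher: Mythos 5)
Your proof is correct, but it takes a different route from the paper, which simply invokes the classical embedding of Evans (Theorem 3, \S 5.9.2: $u\in L^2(0,T;H^1)$ with $u'\in L^2(0,T;H^{-1})$ implies $u\in C([0,T];L^2)$) for the case $s=0$ and then reduces the general case to it by applying that result to $(-\Delta)^{s/2}g$. You instead give a self-contained proof valid for all $s$ at once: you replace Evans' regularization by mollification in time with a Galerkin truncation onto the spherical harmonics of degree $\leqslant N$, where the chain-rule identity $\frac{\mathrm d}{\mathrm dt}\|g^N\|_{\dot H^s}^2=2\langle\partial_tg^N,g^N\rangle_{\dot H^s}$ is immediate, and you obtain the uniform bound from the duality estimate $|\langle u,v\rangle_{\dot H^s}|\leqslant\|u\|_{\dot H^{s-1}}\|v\|_{\dot H^{s+1}}$ plus a mean-value choice of base time; the Cauchy property of $(g^N)$ in $C([0,T],\dot H^s)$ then delivers the continuous representative. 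All the individual steps check out (the projections commute with $\partial_t$, the tails of $g^N$ and $\partial_tg^N$ vanish in the relevant $L^2$-in-time norms by dominated convergence, and your constant $1+\tfrac1{T(n-1)}$ depends only on $T$ and $n$, matching the statement). What your approach buys is independence from the external reference and a single argument covering every real $s$ without the lifting trick; it is also consonant with the rest of the paper, which works with the spherical-harmonic decomposition throughout (Appendix A.1). What the paper's route buys is brevity. One cosmetic point: the proposition's displayed inequality is stated for $u$ while the hypothesis is on $g$; you correctly read these as the same function.
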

The proof in the case $s=0$ is the same as in \cite{evans1998partial}, Thm 3, §5.9.2. To do the general case, we apply the result to $(-\Delta)^{\frac s2}g$. 

\begin{thm}
\label{generalresults}
Given an initial probability measure $f_0$ in $H^s(\mathbb{S})$, there exists a unique weak solution $f$ of \eqref{Doi_eq} such that $f(0)=f_0$. 
This solution is global in time (the definition above is valid for any time $T>0$). Moreover, $f\in C^\infty ((0,+\infty)\times \mathbb{S})$, with~$f(t,\omega)>0$ for all positive $t$. 

We also have the following instantaneous regularity and uniform boundedness estimates (for $m\in \mathbb{N}$, the constant $C$ depending only on $\sigma,m,s$), for all $t>0$:
\begin{equation*}
\|f(t)\|^2_{H^{s+m}}\leqslant C\left(1+\frac1{t^m}\right)\|f_0\|^2_{H^{s}}.
\end{equation*}

\end{thm}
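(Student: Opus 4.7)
The plan is to work in the mean-zero formulation \eqref{weak_solution_meanzero}: setting $f = 1+g$ reduces \eqref{Doi_eq} to a nonlinear heat equation on $\mathbb{S}$ with a bounded linear source $(n-1)\,\omega\cdot J[g]$ and a quadratic nonlinearity $\nabla_\omega\cdot((\mathrm{Id}-\omega\otimes\omega)J[g]\,g)$. I would construct solutions by a Galerkin truncation in spherical harmonics, for which the projected equation is a polynomial ODE, locally well-posed on each finite-dimensional space; passing to the limit requires a uniform a priori bound in $L^\infty_t\dot H^s\cap L^2_t\dot H^{s+1}$.

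The central energy inequality is obtained by testing the weak formulation against $h=(-\Delta)^s g$. Using that $\omega$ is a degree-one spherical harmonic with Laplace--Beltrami eigenvalue $n-1$, so that $J[(-\Delta)^s g]=(n-1)^s J[g]$, and invoking part~2 of Lemma~\ref{main_lemma} to control $\int_{\mathbb{S}} g\,\nabla_\omega(-\Delta)^s g$ by $C\|g\|^2_{\dot H^s}$, one obtains
\[
\tfrac12\tfrac{d}{dt}\|g\|^2_{\dot H^s}+\sigma\|g\|^2_{\dot H^{s+1}}\leq C|J[g]|^2+C\,|J[g]|\,\|g\|^2_{\dot H^s}.
\]
Combined with $|J[g]|\leq C\|g\|_{L^2}$, this closes on a short interval and yields local existence after passing to the Galerkin limit. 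Uniqueness follows from the same estimate applied to the difference $g_1-g_2$ of two solutions: the difference of the quadratic nonlinearities is linear in $g_1-g_2$ with coefficients controlled by $\|g_1\|_{\dot H^s}+\|g_2\|_{\dot H^s}$, and Grönwall closes the argument.

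To globalize, I would first conserve mass (take $h\equiv 1$ in the definition) and propagate nonnegativity of $f_0$ by the parabolic maximum principle applied to the linear equation $\partial_t f = \sigma\Delta f - \nabla_\omega\cdot((\mathrm{Id}-\omega\otimes\omega)J(t)\,f)$ with $J(t):=J[f(t)]$ treated as a given smooth coefficient; Hopf's lemma then promotes this to strict positivity for $t>0$. Consequently $|J[g]|=|J[f]|\leq\|f\|_{L^1}=1$ uniformly in time, so the above inequality becomes
\[
\tfrac{d}{dt}\|g\|^2_{\dot H^s}+2\sigma\|g\|^2_{\dot H^{s+1}}\leq C\|g\|^2_{\dot H^s}+C,
\]
which, after interpolating $\|g\|^2_{\dot H^s}$ between $L^2$ (bounded by the conserved mass) and $\dot H^{s+1}$, produces a uniform bound on $\|g(t)\|_{\dot H^s}$ and hence global existence.

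For the instantaneous-regularization estimate $\|f(t)\|^2_{H^{s+m}}\leq C(1+t^{-m})\|f_0\|^2_{H^s}$, I would iterate a weighted energy argument: inductively, assuming $t^k\|g(t)\|^2_{\dot H^{s+k}}$ is bounded and $t^k\|g\|^2_{\dot H^{s+k+1}}$ is integrable in $t$, differentiating $t^{k+1}\|g\|^2_{\dot H^{s+k+1}}$ and once more invoking Lemma~\ref{main_lemma} for the transport term yields the same pair of bounds at level $s+k+1$; the telescoping gives the stated estimate for every $m$ and hence $C^\infty$ smoothness on $(0,\infty)\times\mathbb{S}$, with analyticity in $\omega$ deferred to Appendix~\ref{analyticity}. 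The main obstacle throughout is the apparent derivative loss in the transport term $J[g]\cdot\int_{\mathbb{S}}g\,\nabla_\omega(-\Delta)^s g$: a naive bound would cost one extra derivative and prevent absorption by the $\sigma$-dissipation. The commutator estimate of Lemma~\ref{main_lemma} is precisely what turns this into a zero-loss term, and every step of the proof hinges on it.
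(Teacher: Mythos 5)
Your overall architecture matches the paper's: Galerkin truncation in spherical harmonics, the key energy estimate obtained by testing against $(-\Delta)^s g$ and controlling the transport term with the commutator estimate of Lemma~\ref{main_lemma}, uniqueness by the same estimate on differences, positivity by the maximum principle with $J[f]$ frozen as a coefficient, the resulting bound $|J[f]|\leqslant 1$ to globalize, and a time-weighted energy iteration for the instantaneous regularization. Two points, one of them a genuine gap.

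The gap is in your uniform boundedness step. You propose to absorb $C\|g\|^2_{\dot H^s}$ by interpolating between $L^2$ ``bounded by the conserved mass'' and $\dot H^{s+1}$. But mass conservation gives $\|f\|_{L^1}=1$, not an $L^2$ bound: a probability density can have arbitrarily large $L^2$ norm, and nothing in the argument up to that point controls $\|g(t)\|_{L^2}$ uniformly in time. As written, the differential inequality $\frac{d}{dt}\|g\|^2_{\dot H^s}+2\sigma\|g\|^2_{\dot H^{s+1}}\leqslant C\|g\|^2_{\dot H^s}+C$ only yields exponential growth via Gr\"onwall — enough for global existence, but not for the uniform-in-time constant $C(\sigma,m,s)$ in the stated estimate $\|f(t)\|^2_{H^{s+m}}\leqslant C(1+t^{-m})\|f_0\|^2_{H^s}$. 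The correct mechanism, which the paper implements via a low/high mode splitting, is that the \emph{probability-measure} property controls the pairing of $f$ against any fixed smooth test function, hence controls every finite block of low modes in any $H^s$ norm: $\|f^N-1\|^2_{\dot H^s}=\int(-\Delta)^s f^N\,f\leqslant\|(-\Delta)^sf^N\|_{L^\infty}\leqslant K_N\|f^N-1\|_{\dot H^s}$ by finite-dimensional norm equivalence, while the high modes carry a Poincar\'e constant $(N+1)(N+n-1)$ large enough to absorb $C\|g\|^2_{\dot H^s}$ into the dissipation. Your interpolation idea can be salvaged by replacing $L^2$ with $H^{-s_0}$ for $s_0>\frac{n-1}{2}$, where probability measures are uniformly bounded, but with $L^2$ it fails. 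A second, smaller imprecision: you close the local estimate with $|J[g]|\leqslant C\|g\|_{L^2}$, whereas the theorem allows $f_0$ to be a measure in $H^s$ with $s$ arbitrarily negative; one needs $|J[g]|\leqslant C\|g\|_{\dot H^s}$ (true because $\omega\in\dot H^{-s}$ for every $s$), and likewise the positivity argument must first be run for regular data and then transferred to measure data by the continuity-in-initial-data estimate, since the maximum principle requires a classical solution.
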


The proof consists in several steps, which we will treat as propositions.
We first use a Galerkin method to prove existence on a small interval. We then show the continuity with respect to initial conditions on this interval (and so the uniqueness). 
Next, we prove the positivity of $1+g$ for regular solutions. 
This gives us a better estimate of $J[g]$. 
Repeating the procedure on the following small interval, and so on, we can show that this extends to any $t>0$. 
Regularizing the initial condition give then global existence in any case.

We finally obtain the instantaneous regularity and boundary estimates by decomposing the solution between low and high modes.

For the proof of all propositions, we will denote by $C_0,C_1,\dots $ some positive constants which depends only on $s$ and $\sigma $.
We will also fix one parameter $K>0$ (which will be a bound on the norm of initial condition), and denote by $M_0,M_1,\dots $ some positive constants which depends only on $s$ and $\sigma $, and $K$.

\begin{prop}Existence: Galerkin method.

We set 
\begin{equation}
T=\frac1{C_1}\ln\left(1+\frac1{1+2C_2K}\right), 
\label{def_small_time}
\end{equation}
where the constant $C_1$ and $C_2$ will be defined later.

If $\|g_0\|_{\dot H^s}\leqslant K$, then we have existence of a weak solution on $[0,T]$ satisfying~\eqref{weak_solution_meanzero}, uniformly bounded in $L^2((0,T),\dot{H}^{s+1}(\mathbb{S}))\cap H^1((0,T),\dot H^{s-1}(\mathbb{S}))$ by a constant $M_1$.
\end{prop}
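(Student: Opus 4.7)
The strategy is the classical Galerkin approach on the sphere: project onto spherical harmonics up to degree $N$, solve the resulting finite-dimensional ODE, derive $N$-independent estimates valid on $[0,T]$, and pass to the limit. What is specific to our setting is that the closure of the nonlinear estimate in $\dot H^s$ will rest on the commutator estimate \eqref{commutator_estimate} from Lemma~\ref{main_lemma}.

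First, I would introduce the approximation. Let $V_N\subset \dot H^s(\mathbb{S})$ be the span of the spherical harmonics of degrees $1,\dots,N$, and let $P_N$ denote the $L^2$-orthogonal projection onto $V_N$. I look for $g_N\in C^1([0,T_N);V_N)$ with $g_N(0)=P_Ng_0$ satisfying
\begin{equation*}
\langle \partial_t g_N,h\rangle =-\sigma \langle \nabla g_N,\nabla h\rangle +(n-1)J[g_N]\cdot J[h]+\langle g_N,J[g_N]\cdot \nabla h\rangle
\end{equation*}
for every $h\in V_N$. Expanded in the spherical harmonic basis this is a system of ODEs with quadratic right-hand side, so Cauchy--Lipschitz provides a unique maximal solution.

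Second, I would establish an $N$-uniform bound in $\dot H^s$. Testing with $h=(-\Delta)^s g_N\in V_N$, the two lower-order terms are bounded by $C_0\|g_N\|_{\dot H^s}^2$, using that $J[(-\Delta)^s g]=(n-1)^s J[g]$ (the degree-one spherical harmonics are eigenfunctions of $-\Delta$ with eigenvalue $n-1$ and $J$ only reads off that mode) together with $|J[g_N]|\leqslant C\|g_N\|_{\dot H^s}$ for the same reason. The delicate cubic transport term is handled by applying \eqref{commutator_estimate} componentwise with the unit vector $e=J[g_N]/|J[g_N]|$, yielding
\begin{equation*}
\bigl|\langle g_N,J[g_N]\cdot \nabla (-\Delta)^sg_N\rangle \bigr|\leqslant C\,|J[g_N]|\,\|g_N\|_{\dot H^s}^2\leqslant C_2'\|g_N\|_{\dot H^s}^3 .
\end{equation*}
Setting $y(t)=\|g_N(t)\|_{\dot H^s}^2$ and temporarily dropping the dissipation $-2\sigma \|g_N\|_{\dot H^{s+1}}^2$, this produces an inequality of the form $y'\leqslant C_1y+2C_2y^{3/2}$, and a Bernoulli-type comparison in $\sqrt y$ yields a uniform bound $y(t)\leqslant M_0$ on the interval $[0,T]$ with $T$ given by~\eqref{def_small_time}, for a suitable choice of $C_1,C_2$. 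Reinstating the dissipation then controls $\|g_N\|_{L^2(0,T;\dot H^{s+1})}$; dually, for any $h\in \dot H^{-s+1}$ the pairing $\langle \partial_t g_N,h\rangle $ is bounded by $C(\|g_N\|_{\dot H^{s+1}}+\|g_N\|_{\dot H^s}+\|g_N\|_{\dot H^s}^2)\|h\|_{\dot H^{-s+1}}$, where part~(1) of Lemma~\ref{main_lemma} is used for the transport contribution, giving the remaining bound on $\|\partial_t g_N\|_{L^2(0,T;\dot H^{s-1})}$.

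Third, I would pass to the limit in $N$. From the uniform bounds plus Aubin--Lions, a subsequence of $(g_N)$ converges weakly in $L^2(0,T;\dot H^{s+1})$, weak-$*$ in $L^\infty (0,T;\dot H^s)$, and strongly in $L^2(0,T;\dot H^{s'})$ for every $s'<s+1$; in particular $J[g_N]\to J[g]$ strongly in $L^2(0,T)$. These modes of convergence suffice to pass to the limit in the weak formulation, approximating a fixed test $h\in \dot H^{-s+1}$ by $P_Nh$: the linear terms pass by weak convergence, and the quadratic term passes by combining strong convergence of $J[g_N]$ with strong $L^2(L^2)$ convergence of $g_N$. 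The principal obstacle throughout is step two, and specifically the cubic transport term: a naive Cauchy--Schwarz bound would produce $|J[g_N]|\,\|g_N\|_{L^2}\|g_N\|_{\dot H^{2s+1}}$, which cannot be absorbed by the linear dissipation as soon as $s>0$, and it is the commutator estimate \eqref{commutator_estimate} that precisely enables the closure at the cubic level.
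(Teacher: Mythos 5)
Your proposal is correct and follows essentially the same route as the paper: Galerkin projection onto spherical harmonics, testing with $(-\Delta)^s g_N$ and closing the cubic transport term via the commutator estimate \eqref{commutator_estimate}, the same differential inequality yielding the time $T$ of \eqref{def_small_time}, and then compactness to pass to the limit. The only (immaterial) difference is that you invoke Aubin--Lions to get strong convergence of $J[g_N]$, whereas the paper bounds $\frac{\mathrm d}{\mathrm dt}J[g^N]$ directly and applies Ascoli--Arzel\`a; both give the compactness on the scalar quantity $J[g_N]$ that makes the quadratic term pass to the limit.
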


\begin{proof}
We denote by $P_N$ the space spanned by the first $N$ (non-constant) eigenvectors of the Laplace-Beltrami operator. 
This is a finite dimensional vector space, included in $\dot{H}^{p}(\mathbb{S})$ for all $p$, and containing the functions of the form $\omega \mapsto V\cdot \omega $ (see Appendix \ref{appendix_harmonics} for more details).
 
Let $g^N \in C^1(I,P_N)$ be the unique solution of the following Cauchy problem, defined on a maximal interval $I\subset \mathbb{R}_+$ (``non-linear'' ODE on a finite dimensional space):
\begin{equation*}
\begin{cases}
 \frac{\mathrm d}{\mathrm dt}g^N = \Pi_N(\sigma \Delta_\omega g^N + (n-1)(1+g^N)\,\omega \cdot J[g^N] - J[g^N]\cdot \nabla_\omega g^N),\\ 
 g^N(0)=\Pi_N(g_0),
\end{cases}
\end{equation*}
where $\Pi_N$ is the orthogonal projection on $P_N$. 
The first equation is equivalent to the fact that for any $h \in P_N$, we have
\begin{equation}
 \frac{\mathrm d}{\mathrm dt}\langle g^N,h\rangle =-\sigma \langle \nabla_\omega g^N,\nabla_\omega h\rangle + (n-1)J[g^N]\cdot J[h] + \langle g^N,J[g^N]\cdot \nabla_\omega h\rangle.
\label{Galerkin}
\end{equation}
The goal is to prove that~$[0, T]\subset I$ and that there exists an extracted sequence~$N_k$ such that, as $k\rightarrow \infty $,
\begin{itemize}
\item $g^{N_k}$ converges weakly in~$L^2((0,T),\dot{H}^{s+1}(\mathbb{S}))$ to a function $g$,
\item $\partial_t g^{N_k}$ converges weakly to~$\partial_t g$ in~$L^2((0,T),\dot{H}^{s}(\mathbb{S}))$,
\item $J[g^{N_k}]\rightarrow J[g]$ uniformly.
\end{itemize}
We have that $(-\Delta)^sg^N \in P_N$, so we can take it for $h$, put it in \eqref{Galerkin} and use the second part of Lemma \ref{main_lemma} to get:
\begin{align}
\frac12\frac{\mathrm d}{\mathrm dt} \|g^N\|^2_{\dot{H}^{s}}+\sigma \|g^N\|^2_{\dot{H}^{s+1}}&\leqslant C_0|J[g^N]|\|g^N\|^2_{\dot{H}^{s}}+(n-1)^s|J[g^N]|^2\label{estimate_galerkin}\\
&\leqslant C_1\|g^N\|^2_{\dot{H}^{s}}(1+C_2\|g^N\|_{\dot{H}^{s}})\label{estimate_galerkin_rough}.
\end{align}
Indeed, any component of $\omega $ belongs to any $\dot{H}^{-s}$, then $J[g^N]=\langle \omega,g^N\rangle $ is controlled by any $\dot{H}^{s}$ norm of $g^N$. 

Solving this inequality, we obtain for $0\leqslant t<C_1^{-1}\ln(1+(C_2\|\Pi^N(g_0)\|_{\dot H^s})^{-1})$, 
\begin{align}
\|g^N\|_{\dot{H}^{s}}\leqslant \frac{\|\Pi^N(g_0)\|_{\dot H^s}}{e^{-C_1t}-C_2\|\Pi^N(g_0)\|_{\dot H^s}(1-e^{-C_1t})}\label{sol_ineq_galerkin}.
\end{align}
Then we have $\|g^N(t)\|_{\dot{H}^{s}}\leqslant 2\|g_0\|_{\dot H^s}$ for all $t$ in $[0,T]$.
There is no finite-time blow up in $[0,T]$, then the ODE $\eqref{Galerkin}$ has a solution on~$[0,T]$, for any $N \in \mathbb{N}$.

Now we denote by $M_0$ a bound for $|J[g^N]|$ on $[0,T]$. The inequality \eqref{estimate_galerkin} gives
\begin{equation*}
\frac12\frac{\mathrm d}{\mathrm dt} \|g^N\|^2_{\dot{H}^{s}}+\sigma \|g^N\|^2_{\dot{H}^{s+1}}\leqslant (1+M_0)C_3\|g\|^2_{\dot{H}^{s}}.
\end{equation*}
Solving this inequality, we get for $t\in [0,T]$
\begin{equation*}
\|g^N\|^2_{\dot{H}^s} + \sigma \int_0^{T}\|g^N\|^2_{\dot{H}^{s+1}} \leqslant \|g_0\|^2_{\dot{H}^s}e^{(1+M_0) C_3 T}.
\end{equation*}
We then use the ODE \eqref{Galerkin} and this estimate to control the derivative of $g$. 

Taking $h \in L^2((0,T),\dot{H}^{-s+1}(\mathbb{S}))$, and integrating the equation in time, we get 
\begin{equation*}
\int_0^{T}\|\partial_tg^N\|^2_{\dot{H}^{s-1}}\leqslant (C_4+M_0)\|g_0\|^2_{\dot{H}^s}e^{(1+M_0) C_3 T}
\end{equation*}
Then we can take $M_1^2=K^2e^{(1+M_0) C_3 T}\max(\sigma^{-1},C_4+M_0)$, and we get that $g^N$ is bounded by $M_1$ in~$L^2((0,T),\dot H^{s+1}(\mathbb{S})) \cap H^1((0,T),\dot H^{s-1}(\mathbb{S}))$.

Now, we just need estimates for $\frac{\mathrm d}{\mathrm dt} J[g^N]$. We can take $h=\omega \cdot V$ in the ODE~\eqref{Galerkin} and use the tools given in the beginning of this section. We get 
\begin{align*}
\left|\frac{\mathrm d}{\mathrm dt} J[g^N]\right| &= \left| \frac{n-1}n(1-\sigma n)J[g^N] - \int_\mathbb{S}(\mathrm{Id} - \omega \otimes \omega) J[g^N] g^N \,\mathrm d\omega,\right|\\
&\leqslant (C_5+M_0C_6)\|g_0\|_{\dot{H}^s}e^{\frac12(1+M_0) C_3 T}.
\end{align*}
Indeed, again, since any component of $\mathrm{Id} - \omega \otimes \omega $ is in $\dot H^{-s}$, we can control the term~$\int_\mathbb{S}(\mathrm{Id} - \omega \otimes \omega) g^N \,\mathrm d\omega $ by any $\dot{H}^{s}$ norm of $g^N$, uniformly in $N$ and in $t\in [0,T]$.

In summary if we suppose that $g_0$ is in $\dot H^{s}(\mathbb{S})$, for some $s \in \mathbb{R}$, we have that $g^N$ is bounded in~$L^2((0,T),\dot H^{s+1}(\mathbb{S})) \cap H^1((0,T),\dot H^{s-1}(\mathbb{S}))$, and that $ J[g^N]$ and $\frac{\mathrm d}{\mathrm dt} J[g^N]$ are uniformly bounded in $N$ and $t \in [0,T]$.

Then, using weak compactness and the Ascoli-Arzela theorem, we can find an increasing sequence $N_k$, a function~$g \in L^2((0,T),\dot H^{s+1}(\mathbb{S})) \cap H^1((0,T),\dot H^{s-1}(\mathbb{S}))$, and a continuous function $J:[0,T]\rightarrow \mathbb{R}^n$ such that, as $k\rightarrow \infty $,
\begin{itemize}
\item $J[g^{N_k}]$ converges uniformly to $J$ on~$[0,T]$,
\item $g^{N_k}$ converges weakly to $g$ in $L^2((0,T),\dot H^{s+1}(\mathbb{S}))$ and in $H^1((0,T),\dot H^{s-1}(\mathbb{S}))$.
\end{itemize}

The limit $g$ is also bounded by $M_1$ in~$L^2((0,T),\dot H^{s+1}(\mathbb{S})) \cap H^1((0,T),\dot H^{s-1}(\mathbb{S}))$.
 
Then, since we have $\int_0^{T}\int_\mathbb{S}\varphi (t)\omega (g^{N_k}-g)\,\mathrm d\omega \,\mathrm dt\rightarrow 0$ for any smooth function $\varphi $, we get $\int_0^{T}\varphi (t)(J[g]-J)\,\mathrm dt=0$ and so $J=J[g]$.

For a fixed $h \in P_M$ passing the weak limit in \eqref{Galerkin} (for $N_k\geqslant M$), we get for almost every $t \in [0,T]$ that
\begin{equation*}
 \forall h \in P_M, \langle \partial_t g,h\rangle =-\sigma \langle \nabla_\omega g,\nabla_\omega h\rangle + (n-1)J[g]\cdot J[h] + \langle g,J[g]\cdot \nabla_\omega h\rangle.
\end{equation*}

And this is valid for any $M$ (except on a countable union of subsets of $[0,T]$ of zero measure).
By density (and using the first part of Lemma \ref{main_lemma}), we have that $g$ is a weak solution of our problem.

Now for any $h\in \dot H^{-s+1}(\mathbb{S})$, we have that $\langle g^N(t)-\Pi_N(g_0),h\rangle =\int_0^t\langle \partial_tg^N,h\rangle $ is controlled by $M_1\sqrt t\|h\|_{\dot H^{-s+1}}$, uniformly in $N$.
So, passing the limit, we get that~$g(t)\rightarrow g_0$ in $\dot H^{-s+1}(\mathbb{S})$ as $t\rightarrow 0$. But since we know that $g\in C([0,T],H^s(\mathbb{S}))$, by uniqueness, we get $g(0)=g_0$. 
\end{proof}

\begin{prop} Continuity with respect to the initial condition.

\label{continuity}
Set $T=\frac1{C_1}\ln(1+\frac1{1+2C_2K})$, as in \eqref{def_small_time}.
Suppose we have two solutions $g$ and $\widetilde g$, with $\|g(0)\|_{\dot H^s}\leqslant K$ and $\|\widetilde g(0)\|_{\dot H^s}\leqslant K$.

Then there exists a constant $M_3$ such that $g-\widetilde g$ is bounded in $L^2((0,T),\dot H^{s+1}(\mathbb{S}))$ and in $H^1((0,T),\dot H^{s-1}(\mathbb{S}))$ by $M_3\|g(0)-\widetilde g(0)\|_{\dot H^s}$.

\end{prop}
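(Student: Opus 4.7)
The plan is to derive an energy estimate for the difference $\delta = g - \widetilde g$ and apply Gronwall's lemma on $[0,T]$. Writing the weak formulation \eqref{weak_solution_meanzero} for each of $g$ and $\widetilde g$ and subtracting, the cross terms split as $J[g]\cdot J[h]-J[\widetilde g]\cdot J[h]=J[\delta]\cdot J[h]$ and $\langle g,J[g]\cdot\nabla h\rangle-\langle\widetilde g,J[\widetilde g]\cdot\nabla h\rangle=\langle \delta,J[g]\cdot\nabla h\rangle+\langle \widetilde g,J[\delta]\cdot\nabla h\rangle$, so $\delta$ satisfies a weak equation that is linear in $\delta$ with coefficients depending on $g$ and $\widetilde g$.

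The main energy step is to take $h=(-\Delta)^s\delta$, justified via the same Galerkin approximation as in the existence proof. The diffusion produces the good term $\sigma\|\delta\|_{\dot H^{s+1}}^2$. The term $(n-1)J[\delta]\cdot J[(-\Delta)^s\delta]$ equals $(n-1)^{s+1}|J[\delta]|^2$ since $\omega$ is an eigenvector of $-\Delta$ with eigenvalue $n-1$, and is thus controlled by $C\|\delta\|_{\dot H^s}^2$. The term $\langle\delta,J[g]\cdot\nabla(-\Delta)^s\delta\rangle$ is exactly the commutator-type integral bounded by part 2 of Lemma \ref{main_lemma}, giving $C|J[g]|\|\delta\|_{\dot H^s}^2$, and $|J[g]|$ is uniformly bounded on $[0,T]$ by the previous proposition.

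The main obstacle is the term $\langle\widetilde g,J[\delta]\cdot\nabla(-\Delta)^s\delta\rangle$: it is not a commutator and involves the high-regularity norm $\dot H^{s+1}$ of $\delta$. I would apply part 1 of Lemma \ref{main_lemma} with $h=(-\Delta)^s\delta\in\dot H^{-s+1}$ to get a bound $C|J[\delta]|\,\|\widetilde g\|_{\dot H^s}\|\delta\|_{\dot H^{s+1}}$, then use $|J[\delta]|\leqslant C\|\delta\|_{\dot H^s}$ and Young's inequality to absorb $\|\delta\|_{\dot H^{s+1}}^2$ into half of $\sigma\|\delta\|_{\dot H^{s+1}}^2$. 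Since $\|\widetilde g\|_{\dot H^s}\leqslant 2K$ on $[0,T]$ by the existence proposition, this yields an inequality of the form
\begin{equation*}
\tfrac{\mathrm d}{\mathrm dt}\|\delta\|_{\dot H^s}^2+\tfrac{\sigma}{2}\|\delta\|_{\dot H^{s+1}}^2\leqslant C_K\|\delta\|_{\dot H^s}^2.
\end{equation*}

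Gronwall's inequality then gives $\|\delta(t)\|_{\dot H^s}^2\leqslant \|\delta(0)\|_{\dot H^s}^2 e^{C_K t}$ for $t\in[0,T]$, and integrating the differential inequality produces the $L^2((0,T),\dot H^{s+1})$ bound for $\delta$ of the required form. For the time derivative bound, I would read off $\partial_t\delta$ directly from the weak equation: taking the supremum over $h\in\dot H^{-s+1}$ with unit norm, part 1 of the main lemma controls each term by $\|\delta\|_{\dot H^{s+1}}$ multiplied by a constant depending on $K$, so squaring and integrating in time gives the bound in $L^2((0,T),\dot H^{s-1})$, hence in $H^1((0,T),\dot H^{s-1})$, all proportional to $\|\delta(0)\|_{\dot H^s}$. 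Uniqueness of the weak solution on $[0,T]$ is then immediate.
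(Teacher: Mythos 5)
Your proof is correct and follows the paper's strategy: the same difference equation \eqref{weak_solution_diff}, the same test function $h=(-\Delta)^s u$ for $u=g-\widetilde g$, the same use of parts 1 and 2 of Lemma \ref{main_lemma}, then Grönwall. The one genuine difference is the treatment of the problematic term $\langle \widetilde g, J[u]\cdot\nabla(-\Delta)^s u\rangle$. The paper applies part 1 of the lemma so as to place the extra derivative on $\widetilde g$, obtaining a bound of the form $C\,\|u\|_{\dot H^{s}}\,\|\widetilde g\|_{\dot H^{s+1}}\,\|u\|_{\dot H^{s}}$ and hence a Grönwall coefficient $M_2(1+\|\widetilde g\|_{\dot H^{s+1}})$ whose time integral is controlled by the $L^2((0,T),\dot H^{s+1})$ bound $M_1$ on $\widetilde g$ coming from the existence proposition. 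You instead place the extra derivative on $u$, getting $C\,\|u\|_{\dot H^s}\|\widetilde g\|_{\dot H^{s}}\|u\|_{\dot H^{s+1}}$, and absorb the $\dot H^{s+1}$ factor into the dissipation by Young's inequality, which only requires the uniform-in-time bound $\|\widetilde g\|_{\dot H^s}\leqslant 2K$. Both close the estimate; yours yields a time-independent Grönwall coefficient, while the paper's version never divides by $\sigma$, which is what lets the same argument survive in the no-noise case $\sigma=0$ of Remark \ref{no_noise} (your absorption step degenerates there). Everything else --- the preliminary uniform bounds on $g$ and $\widetilde g$ on $[0,T]$, the identity $J[(-\Delta)^s u]=(n-1)^sJ[u]$ for the flux term, and reading off the $H^1((0,T),\dot H^{s-1})$ bound directly from the weak equation --- matches the paper.
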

This automatically gives uniqueness of a weak solution on $(0,T)$ with initial condition~$g_0$.

\begin{proof}

Putting $h=(-\Delta)^sg\in \dot H^{-s+1}$ in \eqref{weak_solution_meanzero}, we do the same estimations as in the previous proposition. 
We have the same estimate as \eqref{estimate_galerkin}-\eqref{estimate_galerkin_rough}:
\begin{align}
\frac12\frac{\mathrm d}{\mathrm dt} \|g\|^2_{\dot{H}^{s}}+\sigma \|g\|^2_{\dot{H}^{s+1}}&\leqslant C_0|J[g]|\|g\|^2_{\dot{H}^{s}}+(n-1)^s|J[g]|^2\label{estimate_solution}\\
&\leqslant C_1\|g\|^2_{\dot{H}^{s}}(1+C_2\|g\|_{\dot{H}^{s}})\label{estimate_solution_rough}.
\end{align}
So if we set $T=C_1^{-1}\ln(1+(1+2C_2K)^{-1})$, we can solve this inequality on $[0,T]$, exactly as in \eqref{sol_ineq_galerkin}. 
These solutions are then uniformly bounded in $L^2((0,T),\dot H^{s+1}(\mathbb{S}))$ and in $H^1((0,T),\dot H^{s-1}(\mathbb{S}))$ (by the constant $M_1$). 

Taking $u=g-\widetilde g$, and using \eqref{weak_solution_meanzero} gives an equation for $u$: for almost all $t \in [0,T]$, for all $h \in \dot H^{-s}(\mathbb{S})$, 
\begin{equation}
 \langle \partial_t u,h\rangle =-\sigma \langle \nabla_\omega u,\nabla_\omega h\rangle + (n-1)J[u]\cdot J[h] + \langle u,J[g]\cdot \nabla_\omega h\rangle +\langle \widetilde g,J[u]\cdot \nabla_\omega h\rangle.
\label{weak_solution_diff}
\end{equation}
Now we take $h=(-\Delta)^su$ and use the first and second parts of Lemma \ref{main_lemma} to get
\begin{align}
\frac12\frac{\mathrm d}{\mathrm dt} \|u\|^2_{\dot{H}^{s}}+\sigma \|u\|^2_{\dot{H}^{s+1}}&\leqslant (1+M_1)C_3\|u\|^2_{\dot{H}^{s}}+C_7\|u\|_{\dot{H}^{s}}\|\widetilde g\|_{\dot{H}^{s+1}}\|(-\Delta)^su\|_{\dot{H}^{-s}}\nonumber\\
&\leqslant M_2(1+\|\widetilde g\|_{\dot{H}^{s+1}})\|u\|^2_{\dot{H}^{s}}\label{estimate_diff}.
\end{align}
Grönwall’s lemma gives then the following estimate:
\begin{align*}
\|u\|^2_{\dot{H}^s} + \sigma \int_0^T\|u\|^2_{\dot{H}^{s+1}} &\leqslant \|u_0\|^2_{\dot{H}^s}\exp\left(M_2\int_0^{T}(1+\|\widetilde g\|_{\dot{H}^{s+1}})\right)\\
&\leqslant \|u_0\|^2_{\dot{H}^s}e^{M_2(T+M_1^2)}.
\end{align*}
Using \eqref{weak_solution_diff}, we get that $u$ is bounded in $L^2((0,T),\dot H^{s+1}(\mathbb{S})) \cap H^1((0,T),\dot H^{s-1}(\mathbb{S}))$ by a constant $M_3$ times $\|u(0)\|_{\dot H^{s}}$.
\end{proof}

\begin{prop}Positivity for regular solutions (maximum principle).
\label{positivity}
Suppose that $g_0$ is in $\dot H^s(\mathbb{S})$, with $s$ sufficiently large (according to the Sobolev embeddings, so~$s>\frac{n+3}2$ is enough) so that the (unique) solution belongs to $C^0([0,T],C^2(\mathbb{S}))$. 
Here~$T$ is defined as in \eqref{def_small_time}, with $K=\|g_0\|_{\dot H^{s}}$.
We go back to the original formulation $f=1+g$. Then $f$ is a classical solution of \eqref{Doi_eq}.

If $f_0$ is nonnegative, then $f$ is positive for any positive time, and more precisely we have the following estimates, for all $t\in (0,T]$ and $\omega \in \mathbb{S}$ (if $f_0$ is not equal to the constant function $1$):
\begin{equation}
e^{-(n-1)\int_0^t|J[f]|}\min_\mathbb{S} f_0 <f(t,\omega)<e^{(n-1)\int_0^t|J[f]|}\max_\mathbb{S} f_0.
\label{max_principle}
\end{equation}
\end{prop}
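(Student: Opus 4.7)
First, I would rewrite the equation in a form suitable for the maximum principle. Expanding the divergence in~\eqref{Q_def} and using the identities of Section~\ref{preliminaries}---namely $\nabla_\omega\cdot((\mathrm{Id}-\omega\otimes\omega)V)=-(n-1)\,\omega\cdot V$ for constant $V$, applied with $V=J[f](t)$ which does not depend on~$\omega$, together with the fact that the projection $(\mathrm{Id}-\omega\otimes\omega)$ leaves the tangent field $\nabla_\omega f$ unchanged---I obtain the linear (in $f$) parabolic equation
\begin{equation*}
\partial_t f = \sigma\Delta_\omega f - J[f]\cdot\nabla_\omega f + (n-1)(\omega\cdot J[f])\,f.
\end{equation*}
Under the hypothesis $f\in C^0([0,T],C^2(\mathbb{S}))$, the vector $J[f](\cdot)$ is continuous in~$t$, so this is a genuine parabolic equation on the compact manifold $\mathbb{S}$ with bounded, continuous coefficients, and the classical parabolic maximum principle is available.

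Second, to obtain the two bounds of~\eqref{max_principle} without strictness, I would absorb the time-dependent part of the zero-order term through the integrating factor $v(t,\omega):=f(t,\omega)\,e^{-(n-1)\int_0^t |J[f](s)|\,\mathrm ds}$, which satisfies
\begin{equation*}
\partial_t v - \sigma\Delta_\omega v + J[f]\cdot\nabla_\omega v + (n-1)\bigl(|J[f]|-\omega\cdot J[f]\bigr)\,v = 0,
\end{equation*}
with a non-negative zero-order coefficient. The weak maximum principle compares $v$ to the constant super-solution $\max_\mathbb{S} f_0$ and directly yields the non-strict upper bound in~\eqref{max_principle}. For the lower bound, the symmetric choice $w:=f\,e^{+(n-1)\int_0^t|J[f]|}$ solves an analogous equation whose zero-order coefficient now has the opposite sign; after the standard exponential shift $w\mapsto we^{-\lambda t}$ with $\lambda=2(n-1)\sup_{[0,T]}|J[f]|$ to restore non-negativity, applying the comparison principle to $w-\min_\mathbb{S} f_0$ (whose initial datum is non-negative and whose forcing term is non-negative) delivers $w(t,\omega)\geqslant \min_\mathbb{S} f_0$.

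Finally, to upgrade to the strict inequalities of~\eqref{max_principle}---and in particular to the positivity claim $f>0$ even when $\min_\mathbb{S} f_0=0$---I would invoke the strong maximum principle on the connected compact manifold $\mathbb{S}$ (which has no boundary, so no Hopf lemma is needed). If $v$ attained its maximum $\max_\mathbb{S} f_0$ at some interior $(t^*,\omega^*)$ with $t^*>0$, then $v\equiv \max_\mathbb{S} f_0$ on $[0,t^*]\times\mathbb{S}$; specializing at $t=0$ forces $f_0\equiv\max_\mathbb{S} f_0$, which combined with $\int_\mathbb{S} f_0\,\mathrm d\omega=1$ gives $f_0\equiv 1$, contradicting the hypothesis. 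The mirror argument for $w$ yields the strict lower bound. I expect no serious obstacle beyond justifying the strong maximum principle in this geometric setting: since in local charts $\sigma\Delta_\omega-J[f](t)\cdot\nabla_\omega$ is uniformly elliptic with coefficients continuous in~$t$ and smooth in~$\omega$, the classical Nirenberg-type statements carry over without modification.
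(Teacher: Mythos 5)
Your proof is correct and follows essentially the same route as the paper: the non-conservative form \eqref{Doi_eq_nc}, the integrating factors $e^{\mp(n-1)\int_0^t|J[f]|}$, the weak maximum principle for the non-strict bounds, and the strong maximum principle (with $f_0\equiv 1$ as the only degenerate case) for strictness. The one genuine, and welcome, variation is in the lower bound, where by retaining the exact zero-order coefficient $-(n-1)(|J[f]|+\omega\cdot J[f])$ and performing an exponential shift you avoid the paper's detour through the first time $\min_\mathbb{S}f$ vanishes and the approximation of nonnegative $f_0$ by positive $f_0^\varepsilon$ (needed there because the inequality $\partial_t f\geqslant\sigma\Delta_\omega f-J[f]\cdot\nabla_\omega f-(n-1)|J[f]|f$ presupposes $f\geqslant 0$); the only step you leave implicit is the verification, done in the paper by undoing the integration by parts in \eqref{weak_solution}, that the weak solution is indeed a classical one.
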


\begin{proof}

Since the solution is in $C^0([0,T],C^2(\mathbb{S}))$, we can do the reverse integration by parts in the weak formulation \eqref{weak_solution}.
We get that, as an element of $L^2((0,T),H^{s-1}(\mathbb{S}))$, the function $\partial_tf$ is equal (almost everywhere) to $\sigma \Delta_\omega f-\nabla_\omega \cdot ((\mathrm{Id}-\omega \otimes \omega)J[f]f)$, which is an element of $C^0([0,T]\times \mathbb{S})$. 
So up to redefining it on a set of measure zero, the function $f$ belongs to $C^1([0,T],C(\mathbb{S}))\cap C^0([0,T],C^2(\mathbb{S}))$, and satisfies the partial differential equation.

Applying the chain rule and using the tools given in the beginning of this section, we get another formulation of the PDE \eqref{Doi_eq}:
\begin{equation}
\partial_tf=\sigma \Delta_\omega f-J[f]\cdot \nabla_\omega f + (n-1)J[f]\cdot \omega \, f.
\label{Doi_eq_nc}
\end{equation}

The next part of the proposition is just a classical strong maximum principle. We only prove here the left part of the inequality, the other part is very similar, once we have that $f$ is positive.

Suppose first that $f_0$ is positive.
We denote by $\widetilde T>0$ the first time such that the minimum on the unit sphere of $f$ is zero (or $\widetilde T=T$ if $f$ is always positive).

Then we have for $t\in [0,\widetilde T]$, that $\partial_tf\geqslant \sigma \Delta_\omega f-J[f]\cdot \nabla_\omega f - (n-1)|J[f]| f$. If we write $\widetilde f=f\,e^{-(n-1)\int_0^t|J[f]|}$, we get 
\begin{equation}
\partial_t\widetilde f\geqslant \sigma \Delta_\omega \widetilde f-J[f]\cdot \nabla_\omega \widetilde f.
\label{supersolution}
\end{equation}
Then the weak maximum principle (see \cite{evans1998partial}, Thm 8, §7.1.4, which is also valid on the sphere) gives us that the minimum of $\widetilde f$ on $[0,\widetilde T]\times \mathbb{S}$ is reached on $\{0\}\times \mathbb{S}$. 
That means that we have a non-strict version of the left part of the inequality \eqref{max_principle}:
\begin{equation}
\forall t\in [0,\widetilde T], \forall \omega \in \mathbb{S}, f(\omega,t)\geqslant e^{-(n-1)\int_0^t|J[f]|}\min_\mathbb{S} f_0.
\label{max_principle_weak}
\end{equation}

Consequently, we have that $\min_\mathbb{S} f(\widetilde T)>0$ and so $\widetilde T=T$. 
If now $f_0$ is only nonnegative, take $f_0^\varepsilon =\frac{f+\varepsilon}{1+\varepsilon}$, and by continuity with respect to initial condition, inequality \eqref{max_principle_weak} is still valid. 
That gives that $f$ is nonnegative on $[0, T]$, and consequently we have that inequality \eqref{supersolution} is valid on $[0,\widetilde T]$.

Now we can use the strong maximum principle (see \cite{evans1998partial}, Thm 11, §7.1.4), which gives that if the inequality \eqref{max_principle_weak} is an equality for some $t>0$ and $\omega \in \mathbb{S}$, then $\widetilde f$ is constant on $[0,t]\times \mathbb{S}$. 
So $f_0$ is the constant function $1$.
\end{proof}

\begin{prop} Global existence, positivity.
Suppose $f_0$ is a probability measure belonging to $H^s(\mathbb{S})$ (this is always the case for $s<-\frac{n-1}2$, according to Sobolev embeddings). 
Then there exists a global weak solution of \eqref{Doi_eq}, which remains a probability measure for any time.
\end{prop}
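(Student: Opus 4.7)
The plan is to regularize $f_0$ to smooth positive data, obtain global smooth positive solutions via an a priori bound coming from nonnegativity together with mass conservation, and then pass to the limit using Proposition~\ref{continuity}.

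I would approximate $f_0$ by $f_0^\varepsilon := e^{\varepsilon \Delta} f_0$, the heat flow on the sphere for time $\varepsilon$. This is $C^\infty$ for each $\varepsilon > 0$, nonnegative (the heat kernel on $\mathbb S$ is a strictly positive Markov kernel, so applying it to a probability measure yields a smooth nonnegative function), of unit mass (the semigroup preserves integrals), and $f_0^\varepsilon \to f_0$ in $H^s$ as $\varepsilon \to 0$. In particular $f_0^\varepsilon \in H^{s_0}$ for any $s_0 > \frac{n+3}{2}$, so Proposition~\ref{positivity} applies: on the small interval given by \eqref{def_small_time}, the solution $f^\varepsilon = 1 + g^\varepsilon$ is classical and positive. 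Testing the weak formulation \eqref{weak_solution} against $h \equiv 1$ (or inspecting the divergence form of $Q$) gives $\int_\mathbb S f^\varepsilon(t)\, \mathrm d\omega = 1$, and together with positivity this yields the crucial a priori bound $|J[f^\varepsilon(t)]| \leqslant 1$.

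Injecting this into the energy estimate \eqref{estimate_solution} linearizes the Gr\"onwall argument,
\[
\tfrac12 \tfrac{\mathrm d}{\mathrm dt}\|g^\varepsilon\|^2_{\dot H^s} + \sigma \|g^\varepsilon\|^2_{\dot H^{s+1}} \leqslant C_0 \|g^\varepsilon\|^2_{\dot H^s} + (n-1)^s,
\]
so $\|g^\varepsilon(t)\|_{\dot H^s}$ admits a bound of the form $C_{\mathcal T}(1 + \|g_0\|_{\dot H^s})$ on any finite interval $[0, \mathcal T]$, uniformly in $\varepsilon$. Iterating the local existence argument on successive intervals of length \eqref{def_small_time}, now with $K$ equal to this uniform a priori bound, extends $f^\varepsilon$ to a global classical positive solution of \eqref{Doi_eq}.

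To pass to the limit, Proposition~\ref{continuity} applied on the small interval $[0,T]$ gives a Lipschitz estimate $\|g^\varepsilon - g^{\varepsilon'}\| \leqslant M_3 \|g_0^\varepsilon - g_0^{\varepsilon'}\|_{\dot H^s}$ in $L^2((0,T), \dot H^{s+1}) \cap H^1((0,T), \dot H^{s-1})$. Thanks to the uniform $\dot H^s$ bound, I can iterate this on $[kT, (k+1)T]$ and cover any finite time interval; the sequence $g^\varepsilon$ is therefore Cauchy, and its limit $g$ is a weak solution of \eqref{weak_solution_meanzero} with initial datum $f_0 - 1$. Nonnegativity of $f = 1 + g$ passes to the limit by testing against smooth nonnegative functions (so each $f(t)$ is a nonnegative distribution, hence a nonnegative Radon measure on $\mathbb S$), and unit total mass is preserved by testing against $h \equiv 1$. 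The only subtle point is the choice of regularization, since it must simultaneously preserve nonnegativity, mass, and give $H^s$-convergence; the heat semigroup is the natural candidate that does all three at once.
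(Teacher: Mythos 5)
Your proposal is correct and follows essentially the same route as the paper: regularize the initial datum to smooth positive data, use the maximum principle to get $|J[f]|\leqslant 1$, turn the Gr\"onwall estimate into a linear one to obtain global uniform bounds, and pass to the limit via the stability estimate of Proposition~\ref{continuity}. The only cosmetic difference is that you instantiate the approximating sequence explicitly with the heat semigroup (the paper leaves the choice of positive smooth approximations abstract) and iterate the local Lipschitz estimate interval by interval where the paper runs a single global Gr\"onwall argument on the difference of two approximate solutions.
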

We remark that the uniqueness of the solution on any time interval remains by Proposition \ref{continuity}.

\begin{proof}
We first prove this proposition in the case $s>\frac{n+3}2$.

We define a solution by constructing it on a sequence of intervals.

We set $T_1=\frac1{C_1}\ln(1+\frac1{1+2C_2\|g_0\|_{\dot H^s}})$, as in \eqref{def_small_time}. This gives existence to a solution~$g$ in~$C([0,T_1],\dot H^s(\mathbb{S}))$. 
By induction we define $T_{k+1}=T_{k}+\frac1{C_1}\ln(1+\frac1{1+2C_2\|g(T_{k})\|_{\dot H^s}})$, which gives existence to a solution $g\in C([T_k,T_{k+1}],\dot H^s(\mathbb{S}))$.

So we have a solution on $[0,T]$, provided that $T\leqslant T_k$ for some integer $k$.

Now by the previous proposition, this solution $f=1+g$ is nonnegative. 
We obviously have $|J[g]|=|J[f]|\leqslant \int_\mathbb{S} |\omega | f=1$. 
Then we can do better estimates, starting from \eqref{estimate_solution}:
\begin{align}
\frac12\frac{\mathrm d}{\mathrm dt} \|g\|^2_{\dot{H}^{s}}+\sigma \|g\|^2_{\dot{H}^{s+1}}&\leqslant C_0|J[g]|\|g\|^2_{\dot{H}^{s}}+(n-1)^s|J[g]|^2\nonumber\\
&\leqslant C_8\|g\|^2_{\dot{H}^{s}}\label{estimate_solution_positive}.
\end{align}
Then, Grönwall’s lemma gives us that $\|g(T_{k})\|_{\dot H^s}\leqslant \|g_0\|_{\dot H^s}e^{C_8T_k}$.
Suppose now that the sequence $(T_k)$ is bounded, then $\|g(T_{k})\|_{\dot H^s}$ is also bounded. 
By the definition of~$T_{k+1}$, the difference $T_{k+1}-T_k$ does not tend to zero, which implies that the increasing sequence $(T_k)$ is unbounded, and this is a contradiction. 
So we have that~$T_k\overset{k\rightarrow \infty}{\rightarrow}\infty $, and the solution is global in time.

Now we do the general case for any $s$. 
Take $g_0^k$ a sequence of elements of $\dot H^{\frac{n}2+2}$ converging to $g_0$ in $\dot H^s$, and such that $f_0^k=1+g_0^k$ are positive functions. 
Let $g^k$ be the solutions associated to these initial conditions.

Then we have the same estimates as before, since we still have $|J[g]|\leqslant 1$, solving~\eqref{estimate_solution_positive} gives 
\begin{equation*}
\|g^k(t)\|^2_{\dot H^s}+\sigma \int_0^t\|g^k(t)\|^2_{\dot H^{s+1}}\leqslant \|g^k_0\|_{\dot H^s}e^{C_8t}.
\end{equation*}
So we can now study the difference $u=g^k-g^j$, as in \eqref{weak_solution_diff},\eqref{estimate_diff}, which satisfies, for any $h\in \dot H^{-s}(\mathbb{S})$,
\begin{equation}
 \langle \partial_t u,h\rangle =-\sigma \langle \nabla_\omega u,\nabla_\omega h\rangle + (n-1)J[u]\cdot J[h] + \langle u,J[g^k]\cdot \nabla_\omega h\rangle +\langle g^j,J[u]\cdot \nabla_\omega h\rangle.
\label{weak_solution_diff_positive}
\end{equation}
We take $h=(-\Delta)^su$ and use the first and second part of Lemma \ref{main_lemma} to get
\begin{align}
\frac12\frac{\mathrm d}{\mathrm dt} \|u\|^2_{\dot{H}^{s}}+\sigma \|u\|^2_{\dot{H}^{s+1}}&\leqslant C_9\|u\|^2_{\dot{H}^{s}}+C_7\|u\|_{\dot{H}^{s}}\|g^k\|_{\dot{H}^{s+1}}\|(-\Delta)^su\|_{\dot{H}^{-s}}\nonumber\\
&\leqslant C_{10}(1+\|\widetilde g\|_{\dot{H}^{s+1}})\|u\|^2_{\dot{H}^{s}}\label{estimate_diff_positive}.
\end{align}
If we fix $T>0$, Grönwall’s lemma gives then the following estimate:
\begin{align*}
\|u\|^2_{\dot{H}^s} + \sigma \int_0^T\|u\|^2_{\dot{H}^{s+1}} &\leqslant \|u_0\|^2_{\dot{H}^s}\exp\left(C_{10}\int_0^{T}(1+\|g^j\|_{\dot{H}^{s+1}})\right)\\
&\leqslant \|u_0\|^2_{\dot{H}^s}\exp\left(C_{10}(T+\sigma^{-1}\sqrt T\|g^k_0\|_{\dot H^s}e^{C_8T})\right).
\end{align*}
Since $\|g^k_0\|_{\dot H^s}$ is bounded (because $g^k_0$ converges in $\dot H^s$), together with \eqref{weak_solution_diff}, we finally get that $u$ is bounded in $L^2((0,T),\dot H^{s+1}(\mathbb{S})) \cap H^1((0,T),\dot H^{s-1}(\mathbb{S}))$ by a constant~$C_T$ times $\|u(0)\|_{\dot H^{s}}$. 
This gives that $g^k$ is a Cauchy sequence in that space, and then it converges to a function $g$, which is a weak solution of our problem (by Proposition~\ref{embedding_IC}, we have that $g(0)=g_0$). 
This is valid for any $T>0$, so this solution is global.

If we take $\varphi $ in $C^\infty (\mathbb{S})$, since $f^k(t)=1+g^k(t)$ is a positive function with mean~$1$, 
we have that 
\begin{equation*}
-\|\varphi \|_\infty =\langle f^k(t),-\|\varphi \|_\infty \rangle \leqslant \langle f^k(t),\varphi \rangle \leqslant \langle f^k(t),\|\varphi \|_\infty \rangle =\|\varphi \|_\infty.
\end{equation*} 
Then passing the limit gives $|\langle g(t),\varphi \rangle |\leqslant \|\varphi \|_\infty $. 
Furthermore we have~$\langle f^k(t),1\rangle =1$ so~$\langle f(t),1\rangle =1$, and if $\varphi $ is a nonnegative function, then~$\langle f^k(t),\varphi \rangle \geqslant 0$ and we get~$\langle f(t),\varphi \rangle \geqslant 0$. 
This gives that $f(t)$ is a positive radon measure with mass $1$, which is a probability measure.
\end{proof}
 
\begin{prop}
\label{regularity_boundedness}
Instantaneous regularity and boundedness estimates.
If $f_0$ is a probability measure, then the solution $f$ belongs to~$C^\infty ((0,+\infty)\times \mathbb{S})$, is positive for any time $t>0$, and we have the following estimates, for all $s\in \mathbb{R}$ and~$m\geqslant 0$:
\begin{equation*}
\|f(t)\|^2_{H^{s+m}}\leqslant C\left(1+\frac1{t^m}\right)\|f_0\|^2_{H^{s}},
\end{equation*}
where the constant $C$ depends only on $\sigma $, $s$, and $m$.

In particular we have that for $t_0>0$, f is uniformly bounded on $[t_0,+\infty)$ in any~$H^{s}$ norm.
\end{prop}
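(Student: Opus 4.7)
The plan is to decouple the qualitative claims ($C^\infty$ regularity and strict positivity) from the quantitative estimate, and for the latter to argue by induction on $m\in\mathbb N$ via a time-weighted energy method.

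For the qualitative part, I would start from the weak solution $f\in C([0,\infty),H^s(\mathbb S))$ given by the previous proposition. The dissipation term in \eqref{estimate_solution_positive} forces $g=f-1\in L^2_{\mathrm{loc}}((0,\infty),\dot H^{s+1})$, so there exist arbitrarily small times $\tau>0$ with $g(\tau)\in\dot H^{s+1}$; restarting the global existence theory at $\tau$ at the new regularity level and iterating yields $f(t)\in H^{s+k}$ for every $k\in\mathbb N$ and every $t>0$. Once $s+k>(n+3)/2$, Proposition~\ref{positivity} applies and produces strict positivity for all positive times, and analyticity in $\omega$ is deferred to Appendix~\ref{analyticity}.

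For the quantitative estimate, the basic ingredient at each level is the energy inequality obtained by testing \eqref{weak_solution_meanzero} against $(-\Delta)^{s+m}g$ and invoking Lemma~\ref{main_lemma}, part 2:
\[
\tfrac12\tfrac{d}{dt}\|g\|^2_{\dot H^{s+m}}+\sigma\|g\|^2_{\dot H^{s+m+1}}\leqslant A_m|J[g]|\,\|g\|^2_{\dot H^{s+m}}+B_m|J[g]|^2,
\]
together with the uniform bound $|J[g]|=|J[f]|\leqslant 1$ coming from $f\geqslant 0$ and $\int f=1$. The base case $m=0$ follows from Poincar\'e ($\|g\|^2_{\dot H^{s+1}}\geqslant (n-1)\|g\|^2_{\dot H^s}$) and Gr\"onwall, the additive constant being absorbed into $\|f_0\|^2_{H^s}\geqslant 1$. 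For the induction step $m\mapsto m+1$, multiply the level-$(s+m+1)$ inequality by $t^{m+1}$ and use
\[
\tfrac{d}{dt}\bigl(t^{m+1}\|g\|^2_{\dot H^{s+m+1}}\bigr)=(m+1)t^m\|g\|^2_{\dot H^{s+m+1}}+t^{m+1}\tfrac{d}{dt}\|g\|^2_{\dot H^{s+m+1}};
\]
integrating from $0$ to $t$ and controlling the extra term $(m+1)\int_0^t s^m\|g(s)\|^2_{\dot H^{s+m+1}}\,ds$ via the integrated level-$m$ estimate (which itself is obtained by multiplying the level-$(s+m)$ inequality by $s^m$ and integrating) closes the induction and delivers the $1+1/t^{m+1}$ factor.

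The main obstacle is the uniform-in-time character of the bound: the constants $A_m$ from Lemma~\ref{main_lemma} may exceed $\sigma(n-1)$ when $\sigma$ is small, so a blunt Poincar\'e--Gr\"onwall argument only delivers exponential growth in $t$. Overcoming this requires exploiting the dissipation more sharply --- e.g.\ through an interpolation that absorbs the full nonlinear term into $\sigma\|g\|^2_{\dot H^{s+m+1}}$ --- or anchoring the induction at a low regularity level using an a priori uniform control, such as the $L\log L$ bound stemming from the non-increasing free energy $\mathcal F(f)$ of \eqref{Onsager_free_energy} combined with $|J[f]|^2\leqslant 1$.
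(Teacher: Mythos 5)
Your qualitative bootstrap (restarting the flow at a slightly later time where $g$ has gained one derivative, invoking uniqueness, and iterating, then applying Proposition~\ref{positivity} once the regularity is high enough) matches the paper. The gap is in the quantitative part, and you have in fact located it yourself: with only $|J[g]|\leqslant 1$, the level-$(s+m)$ energy inequality combined with Poincar\'e and Gr\"onwall yields $e^{(A_m-\sigma(n-1))t}$ growth whenever $A_m>\sigma(n-1)$, so the induction you describe never produces a bound that is uniform in time. Neither of your two proposed remedies closes this. An interpolation $\|g\|^2_{\dot H^{s+m}}\leqslant\varepsilon\|g\|^2_{\dot H^{s+m+1}}+C_\varepsilon\|g\|^2_{\dot H^{s'}}$ only shifts the problem to a lower regularity level, where you still need a uniform-in-time anchor; and the $L\log L$ bound from the free energy is not available here, since $f_0$ is an arbitrary probability measure (possibly a Dirac mass, with $\mathcal F(f_0)=+\infty$), the free energy machinery is only developed after this proposition, and in any case $L\log L$ control does not feed back into the $\dot H^{s+m}$ estimate you need.

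The missing idea is the paper's high/low frequency splitting. Write $f=f^N+g^N$ with $f^N$ the projection onto the first $N$ eigenspaces. The bad term splits as $C_0\|g\|^2_{\dot H^s}\leqslant \frac{C_0}{(N+1)(N+n-1)}\|g\|^2_{\dot H^{s+1}}+C_0\|f^N-1\|^2_{\dot H^s}$: the high-mode part is absorbed into the dissipation $\sigma\|g\|^2_{\dot H^{s+1}}$ by choosing $N$ large, because the Poincar\'e constant on the high modes grows like $N^2$; and the low-mode part is bounded \emph{uniformly in time} by a constant $K_N$, because $f(t)$ is a probability measure and hence $\|f^N-1\|^2_{\dot H^s}=\int_{\mathbb S}(-\Delta)^sf^N\,f\leqslant\|(-\Delta)^sf^N\|_{L^\infty}\leqslant K_N\|f^N-1\|_{\dot H^s}$ by equivalence of norms in finite dimension. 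This yields $\frac12\frac{\mathrm d}{\mathrm dt}\|g\|^2_{\dot H^s}+\frac\sigma2\|g\|^2_{\dot H^{s+1}}\leqslant C_{11}$ with $C_{11}$ independent of $t$, which is the inequality your time-weighting scheme actually needs as input (the paper then gains one derivative at a time and iterates over $[0,t/2]$ and $[t/2,t]$, with real $m$ handled by interpolation). Without this uniform anchor, your induction as written does not prove the stated bound.
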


\begin{proof}
Suppose $f_0\in H^s(\mathbb{S})$, and fix $t>0$. 
The solution $f$ is in $C([0,+\infty),H^s(\mathbb{S}))$, and in $L^2((0,t),H^{s+1}(\mathbb{S}))$. 
Then there exists $s<t$ such that $f(s)\in H^{s+1}(\mathbb{S})$. 
So we can construct a solution belonging to $C([s,+\infty),H^{s+1}(\mathbb{S}))$. 
But this solution is also a weak solution in $L^2((s,T),H^{s+1}(\mathbb{S}))\cap H^{-1}((s,T),H^{s-1}(\mathbb{S}))$, for all $T>s$ so by uniqueness it is equal to $f$. 
Then~$f$ belongs to $C([t,+\infty),H^{s+1}(\mathbb{S}))$. 
Since this is true for all $t>0$, then~$f$ belongs to $C((0,+\infty),H^{s+1}(\mathbb{S}))$. 
We can repeat this argument and have that $f$ belongs to $C((0,+\infty),H^p(\mathbb{S}))$ for any $p$, and is a positive classical solution, by Proposition~\ref{positivity}. 
Using the equation, differentiating in time gives that it is also in $C^k((0,+\infty),H^p(\mathbb{S}))$ for any~$p$ and any $k$, so, by Sobolev embeddings, it is a $C^\infty $ function of $(0,+\infty)\times \mathbb{S}$.

Since we have positivity, we can have estimates for any of the modes of $f=1+g$. 
Let us denote $f^N$ the orthogonal projection of $f$ on the $N$ first eigenspaces of the Laplacian, and $g^N=f-f^N$ the projection on the other ones (high modes).

We have a Poincaré inequality on this space: $\|g^N\|^2_{\dot H^{s}}\leqslant \frac1{(N+1)(N+n-1)}\|g^N\|^2_{\dot H^{s+1}}$ (we recall that the eigenvalues of~$-\Delta $ are given by $\ell (\ell +n-2)$ for $\ell \in \mathbb{N}$). 
We use the estimate~\eqref{estimate_solution}: 
\begin{align}
\frac12\frac{\mathrm d}{\mathrm dt} \|g\|^2_{\dot{H}^{s}}+\sigma \|g&\|^2_{\dot{H}^{s+1}}\leqslant C_0|J[g]|\|g\|^2_{\dot{H}^{s}}+(n-1)^s|J[g]|^2\nonumber\\
&\leqslant \tfrac{C_0}{(N+1)(N+n-1)}\|g\|^2_{\dot{H}^{s+1}}+(n-1)^s|J[g]|^2+C_0\|f^N-1\|^2_{\dot{H}^{s}}.
\label{high-low}
\end{align}
Now we have, since $f$ is a probability measure, that 
\begin{equation*}
\|f^N-1\|^2_{\dot{H}^{s}}=\int_{\mathbb{S}}(-\Delta)^sf^N f\mathrm d\omega \leqslant \|(-\Delta)^sf^N\|_{L^\infty}\leqslant K_N\|f^N-1\|_{\dot{H}^{s}},
\end{equation*}
the last inequality being the equivalence between norms in finite dimension.
Dividing by this last norm, this gives that the low modes of $f$ are uniformly bounded in time by a constant $K_N$. Then we have, taking $N$ sufficiently large,
\begin{equation*}
\frac12\frac{\mathrm d}{\mathrm dt} \|g\|^2_{\dot{H}^{s}}+\frac{\sigma}2\|g\|^2_{\dot{H}^{s+1}}\leqslant C_{11},
\end{equation*}
Now multiplying by $t$ this formula at order $s+1$, we get
\begin{equation*}
\frac12\frac{\mathrm d}{\mathrm dt}(t \|g\|^2_{\dot{H}^{s+1}})+\frac{\sigma}2 t \|g\|^2_{\dot{H}^{s+2}}\leqslant C_{12}t + \frac12 \|g\|^2_{\dot{H}^{s+1}},
\end{equation*}
and finally
\begin{equation*}
\frac12\frac{\mathrm d}{\mathrm dt}( \|g\|^2_{\dot{H}^{s}}+\tfrac{\sigma}2 t \|g\|^2_{\dot{H}^{s+1}})+ \tfrac{\sigma}4(\|g\|^2_{\dot{H}^{s+1}}+\tfrac{\sigma}2 t \|g\|^2_{\dot{H}^{s+2}})\leqslant C_{11}+C_{12}\tfrac{\sigma}2t.
\end{equation*}
Together with Poincaré inequality, solving this inequality gives us 
\begin{equation*}
\|g\|^2_{\dot{H}^{s}}+\frac{\sigma}2 t \|g\|^2_{\dot{H}^{s+1}}\leqslant \|g_0\|^2_{\dot{H}^{s}}e^{-(n-1)\frac{\sigma}4t}+C_{13}(1+t).
\end{equation*}
So we have the result for $\|f\|^2_{{H}^{s}}=1+\|g\|^2_{\dot{H}^{s}}$, and $m=1$: 
\begin{equation*}
\|f(t)\|^2_{H^{s+1}}\leqslant C\left(1+\frac1{t}\right)\|f_0\|^2_{H^{s}}.
\end{equation*}
Then we apply this inequality between $0$ and $\frac{t}2$, and the inequality at order $m$ between~$\frac{t}2$ and $t$ to get the result at order $m+1$. 
The case where $m$ is any nonnegative real also works, by interpolation. 
\end{proof}

This last proposition ends the proof of Theorem \ref{generalresults}. Let us do here two small comments concerning the analyticity of the solution and the limit case with no noise: $\sigma =0$.
\begin{remark}
Analyticity of the solution. 
We can show, as claimed in \cite{constantin2004remarks}, \cite{constantin2005dissipativity} that at any time $t>0$ the solution is analytic in the space variable. 
The idea is to show, following~\cite{constantin2005dissipativity} (based on \cite{cao2000gevrey}, \cite{foias1989gevrey}), that the solution is in some Gevrey class of functions, defined by a parameter depending on time. This class is a subset of the set of real analytic functions on the sphere. 
More details and a complete proof are given in Appendix \ref{analyticity}.
We could have directly dealt with this classes of functions instead of working in the Sobolev spaces, but we will not need these properties of analyticity in the following. 
In any case, to prove analyticity we need the initial condition to be in $H^{-\frac{n-1}2}(\mathbb{S})$, so this study of instantaneous regularization was necessary.
\end{remark}

\begin{remark}\label{no_noise}
Case where $\sigma =0$: no noise.
The proof is also valid, except that the solution belongs to $L^\infty ((0,T),H^{s}(\mathbb{S}))\cap H^1((0,T),H^{s-1}(\mathbb{S}))$ if the initial condition is in $H^s(\mathbb{S})$. 
By an optimal regularity argument, we can get that a solution is in fact in~$C([0,T],H^{s}(\mathbb{S}))$.
The nonnegativity argument is then also valid, and so the solution is global.
Obviously, we do not have the instantaneous regularity and boundedness estimates.
\end{remark}

\section{Using the free energy}
\label{using_free_energy}
In this section, we derive the Onsager free energy \eqref{Onsager_free_energy} for Doi equation \eqref{Doi_eq}, and use it to get general results on the steady states.
\subsection{Free energy and steady states}
We rewrite the equation \eqref{Doi_eq}:
\begin{equation*}
\partial_t f = Q(f) = \nabla_\omega \cdot (\sigma \nabla_\omega f - \nabla_\omega (\omega \cdot J[f])f) = \nabla_\omega \cdot (f \nabla_\omega (\sigma \ln f - \omega \cdot J[f])).
\end{equation*}
Since any solution is in $C^\infty ((0,+\infty)\times \mathbb{S})$, and positive for any $t>0$, there is no problem with using $\ln f$, and doing any integration by parts. 
We multiply the equation by~$\sigma \ln f - \omega \cdot J[f]$ and integrate by parts, we get
\begin{equation*}
\int_\mathbb{S} \partial_t f(\sigma \ln f - \omega \cdot J[f])\, \mathrm d\omega = - \int_\mathbb{S} f |\nabla_\omega (\sigma \ln f - \omega \cdot J[f])|^2\, \mathrm d\omega.
\end{equation*}
Since the left part can be recast as a time derivative, this is a conservation relation. 
We define the free energy $\mathcal F(f)$ and the dissipation term $\mathcal D(f)$ by

\begin{gather}
\label{def_F}
\mathcal F(f)=\sigma \int_\mathbb{S} f \ln f - \tfrac12|J[f]|^2,\\
\label{def_D}
\mathcal D(f)=\int_\mathbb{S} f |\nabla_\omega (\sigma \ln f - \omega \cdot J[f])|^2,
\end{gather}
and we have the following conservation relation:
\begin{equation}
\frac{\mathrm d}{\mathrm dt}\mathcal F+\mathcal D=0
\label{free_energy}
\end{equation}

We define a steady state as a (weak) solution which does not depend on time. 
Here are some characterizations of the steady states.

\begin{prop}Steady states.
The steady states of Doi equation \eqref{Doi_eq} are the probability measures $f$ on $\mathbb{S}$ which satisfy one of the following equivalent conditions.
\begin{enumerate}
\item Equilibrium: $f\in C^2(\mathbb{S})$ and $Q(f)=0$
\item No dissipation: $f\in C^1(\mathbb{S})$ and $\mathcal D(f)=0$
\item The probability density $f\in C^0(\mathbb{S})$ is positive and a critical point of $\mathcal F$ (under the constraint of mean $1$).
\item There exists $C\in \mathbb{R}$ such that $\sigma \ln f-J[f]\cdot \omega =C$.
\end{enumerate}
\end{prop}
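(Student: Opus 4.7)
The plan is first to upgrade the regularity of any steady state to $C^\infty$, and then to close the cycle of implications $1 \Rightarrow 2 \Rightarrow 4 \Rightarrow 3 \Rightarrow 1$, with the observation that condition $1$ is itself equivalent to being a time-independent weak solution. If $f$ is a steady state in the sense of Definition \ref{definition_weak_solution}, then the constant-in-time function $(t,\omega) \mapsto f(\omega)$ is a global weak solution with initial datum $f$; by uniqueness (Proposition \ref{continuity}) and the instantaneous regularization of Proposition \ref{regularity_boundedness}, this solution lies in $C^\infty((0,\infty)\times\mathbb{S})$ and is strictly positive. Being constant in time, $f$ itself is smooth and positive, and $Q(f)=0$ holds classically. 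Conversely, any $f \in C^2(\mathbb{S})$ with $Q(f)=0$ is trivially a stationary weak solution.

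For $1 \Rightarrow 2$, I would multiply $Q(f)=0$ by the chemical potential $\mu := \sigma\ln f - \omega\cdot J[f]$ (legitimate since $f > 0$) and integrate on $\mathbb{S}$; integration by parts yields $\mathcal D(f) = \int_\mathbb{S} f\,|\nabla_\omega\mu|^2\,\mathrm d\omega = 0$, exactly as in the derivation of \eqref{free_energy}. For $2 \Rightarrow 4$, the vanishing of $\mathcal D(f)$ combined with $f > 0$ forces $\nabla_\omega\mu \equiv 0$ on the connected manifold $\mathbb{S}$, hence $\mu$ is a constant $C$. For $4 \Rightarrow 3$, the identity $\sigma\ln f = C + \omega\cdot J[f]$ already exhibits $f$ as a smooth positive function, and the first variation of $\mathcal F$ at $f$ against any $g$ with $\int_\mathbb{S}g\,\mathrm d\omega = 0$ equals
\begin{equation*}
\int_\mathbb{S} g\,(\sigma\ln f + \sigma - \omega\cdot J[f])\,\mathrm d\omega = \int_\mathbb{S} g\,\mu\,\mathrm d\omega = C\int_\mathbb{S} g\,\mathrm d\omega = 0,
\end{equation*}
since the mean-zero constraint kills the constant term $\sigma\int g$. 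Finally, for $3 \Rightarrow 1$, the same computation of the first variation shows that being a constrained critical point is equivalent to $\sigma\ln f + \sigma - \omega\cdot J[f]$ being a constant (the Lagrange multiplier), i.e.\ to $\mu$ being constant; the formula $f = \exp((C+\omega\cdot J[f])/\sigma)$ then gives $f \in C^\infty$, and $Q(f) = \nabla_\omega\cdot(f\nabla_\omega\mu) = 0$.

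The only real obstruction is the initial regularity step: a weak steady state is a priori only a distribution in some negative Sobolev space, so manipulations with $\ln f$, pointwise positivity, and the variational calculus would not make sense without first upgrading to $C^\infty$ and strict positivity. Once that is established via Propositions \ref{continuity}, \ref{regularity_boundedness} and \ref{positivity}, all the remaining implications reduce to short consequences of the factorization $Q(f) = \nabla_\omega\cdot(f\nabla_\omega\mu)$ and of the connectedness of $\mathbb{S}$.
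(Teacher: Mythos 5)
Your overall architecture matches the paper's: identify steady states with stationary weak solutions, use uniqueness, instantaneous regularization and the maximum principle to get smoothness and positivity of a steady state, then close a cycle of implications using the factorization $Q(f)=\nabla_\omega\cdot(f\nabla_\omega\mu)$ with $\mu=\sigma\ln f-\omega\cdot J[f]$ and the first variation of $\mathcal F$. The implications $1\Rightarrow 2$, $4\Rightarrow 3$, $3\Rightarrow 4$ and $4\Rightarrow 1$ are correct and coincide with the paper's computations.

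The gap is in $2\Rightarrow 4$. Condition 2 only assumes that $f$ is a $C^1(\mathbb{S})$ probability density with $\mathcal D(f)=0$; strict positivity is \emph{not} part of that hypothesis, and you cannot import it from condition 1, since each arrow of the cycle must hold under the hypotheses of its source condition alone --- otherwise the equivalence of condition 2 with the others is not established. From $\mathcal D(f)=\int_\mathbb{S} f\,|\nabla_\omega\mu|^2=0$ and $f\geqslant 0$ you only obtain $\nabla_\omega\mu=0$ on the open set $\{f>0\}$, whose connected components could a priori carry different constants, with $f$ vanishing in between (where $\ln f=-\infty$ and the argument says nothing). The paper closes exactly this hole with a connectedness argument: setting $\varphi=\sigma\ln f-J[f]\cdot\omega$, each level set $\varphi^{-1}(\{C\})$ is open (local constancy where $\varphi$ is finite) and also closed, because $\varphi(\omega_k)=C$ forces $f(\omega_k)=\exp\bigl((C+J[f]\cdot\omega_k)/\sigma\bigr)$, and continuity of $f$ gives $f(\omega_\infty)>0$ and $\varphi(\omega_\infty)=C$ at any limit point; since $f\not\equiv 0$ some level set is nonempty, hence equal to all of $\mathbb{S}$. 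This simultaneously proves $f>0$ everywhere and $\mu\equiv C$. You need this (or an equivalent) argument for $2\Rightarrow 4$ to be valid.
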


\begin{proof}
By definition, a steady state $f$ is a solution independent of $t$. 
Since it is a solution, it is positive and $C^\infty $, and we get that $Q(f)=0$.
By the conservation relation \eqref{free_energy}, we get that $\frac{\mathrm d}{\mathrm dt}\mathcal F=0$, so $\mathcal D(f)=0$. 
Since it is positive, we get that~$\nabla_\omega (\sigma \ln f - \omega \cdot J[f])=0$, so there exists $C\in \mathbb{R}$ such that $\sigma \ln f-J[f]\cdot \omega =C$.

Now we do a variational study of $\mathcal F$ around $f$. 
We take a small perturbation~$f+h$ of~$f$ which remains a probability density function (which means that $\int_\mathbb{S} h=0$).

We can expand the function $x\mapsto x\ln x$ around $f$, since $f\geqslant \varepsilon >0$, and we have
\begin{align*}
\mathcal F(f+h)&=\sigma \int_\mathbb{S}(f\ln f+h\ln f + h) - \frac12|J[f]|^2 - J[f]\cdot \int_\mathbb{S} \omega h +O(\|h\|_\infty^2)\\
&=\mathcal F(f)+\int_\mathbb{S} h(\sigma \ln f - J[f]\cdot \omega)+O(\|h\|_\infty^2),\\
&=\mathcal F(f)+O(\|h\|_\infty^2),
\end{align*}
which means that $f$ is a critical point of $\mathcal F$. So $f$ satisfies the four conditions.

Conversely if $f\in C^2(\mathbb{S})$ and $Q(f)=0$, then $f$ is obviously a steady-state. 

If $\sigma \ln f-J[f]\cdot \omega =C$, then $f\in C^2(\mathbb{S})$ and $Q(f)=0$.
We will show that the second and third conditions reduce to this fourth condition.

Doing the above computation around a positive $f\in C^0(\mathbb{S})$ gives that if $f$ is a critical point for the free energy, then~$\int_\mathbb{S} h(\sigma \ln f - J[f]\cdot \omega)$ is zero for any $h$ with mean zero.
This is exactly saying that~$\sigma \ln f - J[f]\cdot \omega $ is constant.

Finally if we suppose $f\in C^1(\mathbb{S})$ and $\mathcal D(f)=0$, at any point $\omega_0\in \mathbb{S}$ such that~$f(\omega_0)>0$ we have that $\nabla (\sigma \ln f-J[f]\cdot \omega)=0$ on a neighborhood of $\omega_0$. 

The function $\varphi $ defined by $\varphi (\omega)=\sigma \ln f-J[f]\cdot \omega $ is then locally constant at any point where it is finite, so~$\varphi^{-1}(\{C\})$ is open in $\mathbb{S}$ for any $C\in \mathbb{R}$. 

Now if $\varphi (\omega_k)=C$, with $\omega_k$ converging to $\omega_\infty $, then $f(\omega_k)=\exp(\frac{C+J[f]\cdot \omega_k}{\sigma})$. 
Passing to the limit, we get that $f(\omega_\infty)=\exp(\frac{C+J[f]\cdot \omega_\infty}{\sigma})$, which gives~$\varphi (\omega_\infty)=C$. 
So $\varphi^{-1}(\{C\})$ is closed. 

Since $f$ is not identically zero, there exists $C\in \mathbb{R}$ such \mbox{that $\varphi^{-1}(\{C\})\neq \emptyset$}, and by connectedness of the sphere, we get $\varphi^{-1}(\{C\})=\mathbb{S}$, so~$\sigma \ln f - J[f]\cdot \omega =C$.
\end{proof}

\subsection{LaSalle principle}
We give here an adaptation of LaSalle’s invariance principle to our PDE framework.
\begin{prop}
\label{lasalle}LaSalle’s invariance principle.
Let $f_0$ be a probability measure on the sphere $\mathbb{S}$. 
We denote by $\mathcal F_\infty $ the limit of~$\mathcal F(f(t))$ as $t\rightarrow \infty $, where $f$ is the solution to Doi equation \eqref{Doi_eq} with initial condition $f_0$.

Then the set $\mathcal E_\infty =\{f \in C^\infty (\mathbb{S}) \text{ s.t. } \mathcal D(f)=0 \text{ and } \mathcal F(f)=\mathcal F_\infty \}$ is not empty.

Furthermore $f(t)$ converges in any $H^s$ norm to this set of equilibria (in the following sense):
\begin{equation*}
\lim_{t\rightarrow \infty}\, \inf_{g\in \mathcal E_\infty}\|f(t)-g\|_{H^s}=0.
\end{equation*}
\end{prop}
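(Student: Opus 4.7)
The plan is to exploit the free energy--dissipation identity \eqref{free_energy} combined with the uniform Sobolev bounds of Proposition~\ref{regularity_boundedness}, following the classical LaSalle scheme adapted to this PDE setting. First I would show that $\mathcal F$ is bounded below along the trajectory: since $f(t)$ is a probability density on $\mathbb S$ (with normalized measure of total mass one), Jensen's inequality gives $\int_\mathbb S f \ln f \geq 0$, and $|J[f]| \leq \int_\mathbb S |\omega| f = 1$, so $\mathcal F(f(t)) \geq -\tfrac12$. The identity \eqref{free_energy} then shows that $\mathcal F(f(\cdot))$ is non-increasing, hence converges to some $\mathcal F_\infty$, and that $\int_0^\infty \mathcal D(f(s))\, \mathrm ds = \mathcal F(f_0) - \mathcal F_\infty < \infty$.

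Next I would promote this summability to $\mathcal D(f(t)) \to 0$ along the full sequence $t \to \infty$ via Barbalat's lemma. By Propositions~\ref{positivity} and~\ref{regularity_boundedness}, for any $t_0 > 0$ the solution $f$ is uniformly bounded on $[t_0, \infty)$ in every $C^k$ norm and, by the strong maximum principle, bounded below by a positive constant. Differentiating the equation in time and iterating the argument gives the same uniform bounds for $\partial_t f$. Thus $t \mapsto \mathcal D(f(t))$ is Lipschitz on $[t_0,\infty)$, hence uniformly continuous; combined with its integrability, Barbalat's lemma yields $\mathcal D(f(t)) \to 0$.

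To conclude, fix $s \in \mathbb R$ and any sequence $t_n \to \infty$. The uniform $H^p$ bounds together with compactness of the embedding $H^{p+1}(\mathbb S) \hookrightarrow H^p(\mathbb S)$ let me extract a subsequence along which $f(t_n) \to f_\infty$ in $C^\infty(\mathbb S)$; the limit $f_\infty$ is automatically a nonnegative $C^\infty$ probability density with a uniform lower bound. Since $\mathcal F$ and $\mathcal D$ are continuous under $C^\infty$ convergence of uniformly positive densities, $\mathcal F(f_\infty) = \mathcal F_\infty$ and $\mathcal D(f_\infty) = 0$, so $f_\infty \in \mathcal E_\infty$, giving non-emptiness. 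For the set convergence, suppose by contradiction there exist $\varepsilon > 0$ and $t_n \to \infty$ with $\inf_{g \in \mathcal E_\infty}\|f(t_n) - g\|_{H^s} \geq \varepsilon$; the extraction above applied to this sequence produces a subsequence of $f(t_n)$ converging in $H^s$ to some $f_\infty \in \mathcal E_\infty$, contradicting the lower bound. The main obstacle is precisely the Barbalat step, namely upgrading the mere integrability of $\mathcal D(f(\cdot))$ to a pointwise limit; this crucially uses both the uniform regularity and the strict positivity from the maximum principle, without which $\mathcal D$ would not even be a continuous functional of the state.
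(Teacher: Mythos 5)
Your overall strategy (integrate the dissipation identity, upgrade $\int_0^\infty\mathcal D(f)\,\mathrm dt<\infty$ to information about the limit points, then extract limits by compactness and uniform regularity) is the right one and matches the paper's, but the pivotal step has a gap. Both your Barbalat argument (Lipschitz continuity of $t\mapsto\mathcal D(f(t))$) and your final continuity claim for $\mathcal D$ rest on the assertion that $f$ is bounded below by a positive constant \emph{uniformly on} $[t_0,\infty)$, which you attribute to the strong maximum principle. That is not what Proposition~\ref{positivity} gives: the lower bound in \eqref{max_principle} is $e^{-(n-1)\int_0^t|J[f]|}\min_\mathbb{S} f_0$, which may decay like $e^{-(n-1)t}$, and no uniform-in-time positive lower bound is established anywhere in the paper. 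A priori a subsequential limit $f_\infty$ of $f(t_n)$ is only nonnegative, and $\mathcal D$, whose leading term is $\sigma^2\int_\mathbb{S}|\nabla f|^2/f$, is then neither obviously Lipschitz along the trajectory nor continuous at $f_\infty$. (A uniform lower bound could likely be obtained from a parabolic Harnack inequality, since the drift coefficients are uniformly bounded and the mass is $1$, but that is an additional ingredient you would have to supply, not a consequence of the cited propositions.)

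The paper circumvents exactly this difficulty. Instead of showing $\mathcal D(f(t))\to0$ pointwise, it argues by contradiction at the level of the state: if a subsequential limit $f_\infty$ had $\mathcal D(f_\infty)>0$, one introduces the regularized functional $\mathcal D_\varepsilon$ (with $f+\varepsilon$ in the denominator of the Fisher-information term), which is continuous on $H^s$ with no positivity assumption and satisfies $\mathcal D\geqslant\mathcal D_\varepsilon$ together with $\mathcal D_\varepsilon(f_\infty)\to\mathcal D(f_\infty)$ by monotone convergence; this yields $\mathcal D(f)\geqslant M>0$ on an $H^s$-ball around $f_\infty$. Combined with the uniform bound on $\partial_t f$ (so that $f(t)$ stays in that ball for a time $\tau$ around each $t_n$), this contradicts $\int_0^\infty\mathcal D(f)\,\mathrm dt<\infty$. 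If you replace your Barbalat step by this localized argument --- or independently prove the uniform lower bound on $f$ --- your proof goes through; note also that in the final extraction step, lower semicontinuity of $\mathcal D$ (as the supremum of the continuous functionals $\mathcal D_\varepsilon$) suffices in place of the continuity you invoke.
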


\begin{proof}

First of all $\mathcal F(f(t))$ is decreasing in time, and bounded below by $-\frac12$, so $\mathcal F_\infty $ is well defined.

Let $(t_n)$ be an unbounded increasing sequence, and suppose that $f(t_n)$ converges in~$H^s(\mathbb{S})$ to $f_\infty $ for some $s\in \mathbb{R}$.
We first remark that $f(t_n)$ is uniformly bounded in~$H^{s+2p}(\mathbb{S})$ (using Theorem \ref{generalresults}), and then by a simple interpolation estimate we get that~$\|f(t_n)-f(t_m)\|^2_{\dot H^{s+p}}\leqslant \|f(t_n)-f(t_m)\|_{\dot H^{s}}\|f(t_n)-f(t_m)\|_{\dot H^{s+2p}}$, and $f(t_n)$ also converges in $H^{s+p}(\mathbb{S})$. 
So $f_\infty $ is in any $H^s(\mathbb{S})$. 

We want to prove that $\mathcal D(f_\infty)=0$. 
Supposing this is not the case, we write
\begin{align}
\mathcal D(f)&=\sigma^2\int_\mathbb{S}\frac{|\nabla_\omega f|^2}{f} + J[f]\cdot \int_\mathbb{S}(\mathrm{Id} - \omega \otimes \omega)f\, J[f] - 2 \sigma J[f]\cdot \int_\mathbb{S}\nabla_\omega f\nonumber\\
&=\sigma^2\int_\mathbb{S}\frac{|\nabla_\omega f|^2}{f} +(1-2(n-1)\sigma)|J[f]|^2 - \int_\mathbb{S} (\omega \cdot J[f])^2f.
\label{expression_D}
\end{align}
Now we take $s$ sufficiently large such that $H^{s}(\mathbb{S})\subset L_\infty (\mathbb{S})\cap H^1(\mathbb{S})$. 
If $f_\infty $ is positive, then~$\mathcal D$, as a function from the nonnegative elements of $H^{s}(\mathbb{S})$ to $[0,+\infty ]$, is continuous at the point $f_\infty $. 
In particular since $\mathcal D(f_\infty)>0$, there exist $\delta >0$ and~$M>0$ such that if $\|f-f_\infty \|_{H^s}\leqslant \delta $, then we have~$\mathcal D(f)\geqslant M$.
We want to show the same result in the case where $f_\infty $ is only nonnegative.
We define 
\begin{equation*}
\mathcal D_\varepsilon (f)=\sigma^2\int_\mathbb{S}\frac{|\nabla_\omega f|^2}{f+\varepsilon} +(1-2(n-1)\sigma)|J[f]|^2 - \int_\mathbb{S} (\omega \cdot J[f])^2f.
\end{equation*}
We have that by monotone convergence that $\mathcal D_\varepsilon (f_\infty)$ converges to $\mathcal D(f_\infty)$ as~$\varepsilon \rightarrow 0$. 
So there exists $\varepsilon >0$ such that $\mathcal D_\varepsilon (f_\infty)>0$. 
Now by continuity of $\mathcal D_\varepsilon $ at the point~$f_\infty $, we get that there exists $\delta >0$ and $M>0$ such that if $\|f-f_\infty \|_{H^s}\leqslant \delta $, then $\mathcal D_\varepsilon (f)\geqslant M$. 
And the fact that $\mathcal D(f) \geqslant \mathcal D_\varepsilon (f)$ gives the same result as before.

Now since $\partial_tf$ is uniformly bounded in $H^s$ (for $t\geqslant t_1>0$), there exists~$\tau >0$ such that if $|t-t'|\leqslant \tau $, then $\|f(t)-f(t')\|_{H^s}\leqslant \frac{\delta}2$. 
We take then $N$ sufficiently large such that~$\|f(t_n)-f_\infty \|_{H^s}\leqslant \frac{\delta}2$ for all $n\geqslant N$.

Then we have that for $n\geqslant N$, $\mathcal D(f)\geqslant M$ on $[t_n,t_n+\tau ]$. Up to extracting, we can assume that $t_{n+1}\geqslant t_n+\tau $, so we have
\begin{equation*}
\mathcal F(f(t_N))-\mathcal F(f(t_{N+p}))=\int^{t_{N+p}}_{t_N}\mathcal D(f)\geqslant p\tau M.
\end{equation*}
Since the left term is bounded by $\mathcal F(f(t_N))-\mathcal F_\infty $, taking $p$ sufficiently large gives the contradiction.

Now if we suppose that for a given $s$ the distance (in $H^s$ norm) between $f(t)$ and~$\mathcal E_\infty $ does not tend to $0$, we get $\varepsilon >0$ and a sequence $t_n$ such that for all $g\in \mathcal E_\infty $, we have $\|f(t_n)-g\|_{H^s}\geqslant \varepsilon $.
Since $f(t_n)$ is bounded in $H^{s+1}(\mathbb{S})$, by a compact Sobolev embedding, up to extracting we can assume that $f(t_n)$ is converging in~$H^{s}(\mathbb{S})$ to~$f_\infty $. 
By the previous argument $f\in C^\infty (\mathbb{S})$ and we have $\mathcal D(f_\infty)=0$. 
Obviously since~$\mathcal F(f)$ is decreasing in time we have that $\mathcal F(f_\infty)=\mathcal F_\infty $. So $f_\infty $ belongs to $\mathcal E_\infty $, and then~$\|f(t_n)-f_\infty \|_{H^s}\geqslant \varepsilon $ for all $n$. 
This is a contradiction.

Since the distance between $f(t)$ and $\mathcal E_\infty $ tends to $0$, obviously this set is not empty.
\end{proof}

\subsection{Computation of equilibria}

Define, for a unit vector $\Omega \in \mathbb{S}$, and $\kappa \geqslant 0$ the Fisher-Von Mises distribution with concentration parameter $\kappa $ and orientation $\Omega $ by 
\begin{equation}
 M_{\kappa \Omega}(\omega)=\dfrac{\exp(\kappa \,\omega \cdot \Omega)}{\int_{\mathbb{S}}\exp(\kappa \,\upsilon \cdot \Omega )\mathrm d\upsilon} \,.
\end{equation}
Note that the denominator depends only on $\kappa $. 
We have that the density of $M_{\kappa \Omega}$ is~$1$, and the flux is
\begin{equation}
 J[M_{\kappa \Omega}]=\dfrac{\int_{\mathbb{S}}\omega \exp(\kappa \,\omega \cdot \Omega)\mathrm d\omega}{\int_{\mathbb{S}}\exp(\kappa \,\omega \cdot \Omega)\mathrm d\omega} = c(\kappa)\Omega,
\label{flux_M}
\end{equation}
where 
\begin{equation}
 c(\kappa)=\dfrac{\int_0^\pi \cos\theta \, e^{\kappa \cos\theta}\sin^{n-2}\theta \, \mathrm d\theta}{\int_0^\pi e^{\kappa \cos\theta}\sin^{n-2}\theta \, \mathrm d\theta}.
\end{equation}
If $f$ is an equilibrium, $\sigma \ln f - J[f]\cdot \omega $ is constant, and then~$f=C\exp(\sigma^{-1}J[f]\cdot \omega)$.
Since $f$ is a probability density function, we get $f=M_{\kappa \Omega}$ with $\kappa \Omega =\sigma^{-1}J[f]$ (in the case where $|J[f]|=0$, then $\kappa =0$ and we can take any $\Omega $, this is just the uniform distribution). 
Finally with \eqref{flux_M} we get $J[f]= c(\kappa)\Omega $, which gives the following compatibility condition
\begin{equation}
c(\kappa)=\sigma \kappa.
\label{compatibility_condition}
\end{equation}
We give the solutions of this equation in a proposition.

\begin{prop} Compatibility condition
\begin{itemize}
\item If $\sigma \geqslant \frac1n$, there is only one solution to the compatibility condition: $\kappa =0$. 
The only equilibrium is the constant function $f=1$.
\item If $\sigma <\frac1n$, the compatibility condition has exactly two solutions: $\kappa =0$ and one unique positive solution, that we will denote $\kappa (\sigma)$.
Apart from the constant function $f=1$ (the case $\kappa =0$), the equilibria form a manifold of dimension~$n-1$: the functions of the form $f=M_{\kappa (\sigma)\Omega}$, where $\Omega \in \mathbb{S}$ is an arbitrary unit vector.
\end{itemize}
\end{prop}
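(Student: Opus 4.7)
The characterization of steady states from the previous proposition reduces the problem to solving the scalar compatibility equation $c(\kappa)=\sigma\kappa$ on $[0,\infty)$: any equilibrium has the form $f=M_{\kappa\Omega}$ with $\kappa\Omega=\sigma^{-1}J[M_{\kappa\Omega}]=\sigma^{-1}c(\kappa)\Omega$ by \eqref{flux_M}. The value $\kappa=0$ always yields the uniform distribution, while a positive solution $\kappa$ gives a family of equilibria parameterized by $\Omega\in\mathbb{S}$, the claimed $(n-1)$-dimensional manifold. It remains to count the positive solutions of $c(\kappa)=\sigma\kappa$.

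The plan is to study $h(\kappa):=c(\kappa)/\kappa$ (extended by continuity to $h(0)=c'(0)$) and show it is strictly decreasing from $h(0)=1/n$ to $h(\infty)=0$. For the boundary values, the formula $\int_{\mathbb{S}}\omega_1^2\,\mathrm d\omega=1/n$ combined with a direct differentiation at the origin gives $c'(0)=1/n$; and as $\kappa\to\infty$, Laplace's method applied to $e^{\kappa\cos\theta}\sin^{n-2}\theta\,\mathrm d\theta$ (which concentrates at $\theta=0$) gives $c(\kappa)\to 1$, hence $h\to 0$. Once strict monotonicity is established, the intermediate value theorem yields the proposition: no positive solution when $\sigma\geqslant 1/n$, and a unique positive solution $\kappa(\sigma)$ when $\sigma<1/n$.

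The core step is the monotonicity of $h$. My plan is to derive a Riccati-type ODE for $c$ via integration by parts. Substituting $u=\cos\theta$ and using $\partial_u[(1-u^2)^{(n-1)/2}]=-(n-1)u(1-u^2)^{(n-3)/2}$ (the boundary term vanishes) gives $(n-1)c(\kappa)=\kappa(1-\langle u^2\rangle_\kappa)$, where $\langle\cdot\rangle_\kappa$ denotes expectation under the probability measure proportional to $e^{\kappa u}(1-u^2)^{(n-3)/2}\,\mathrm du$ on $[-1,1]$. The exponential-family identity $\langle u^2\rangle_\kappa=c'(\kappa)+c(\kappa)^2$ then yields $\kappa c'+(n-1)c+\kappa c^2=\kappa$, equivalently $\kappa h'=1-nh-\kappa^2 h^2$.

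At any interior critical point $\kappa_0>0$ of $h$ one has $nh(\kappa_0)+\kappa_0^2 h(\kappa_0)^2=1$; differentiating the ODE once more yields $\kappa_0 h''(\kappa_0)=-2\kappa_0 h(\kappa_0)^2<0$, so every critical point of $h$ in $(0,\infty)$ must be a strict local maximum. A low-order Taylor expansion, using that $c$ is odd with $c'(0)=1/n$ and $c'''(0)=-6/(n^2(n+2))$ (via $\int_{\mathbb{S}}\omega_1^4\,\mathrm d\omega=3/(n(n+2))$), gives $h(\kappa)=1/n-\kappa^2/(n^2(n+2))+O(\kappa^4)$, so $h$ is strictly decreasing just after $0$. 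If $h$ admitted any critical point $\kappa_1>0$, then $h'$ would have to pass from negative values to $0$ at $\kappa_1$ while $h''(\kappa_1)<0$, a contradiction. Hence $h'<0$ throughout $(0,\infty)$. I expect the main obstacle to be exactly this monotonicity step: the naive approach of proving $c$ globally concave fails because $c''(\kappa)$ (the third central moment) becomes negative for large $\kappa$ where the measure is strongly skewed, so going through the critical-point analysis of the Riccati ODE seems essential.
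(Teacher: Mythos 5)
Your argument is correct, and the reduction to the scalar equation $c(\kappa)=\sigma\kappa$ via the steady-state characterization and \eqref{flux_M} is exactly the paper's. Where you diverge is in proving the strict monotonicity of $h(\kappa)=c(\kappa)/\kappa$. Both you and the paper arrive (by the same integration by parts and the exponential-family variance identity) at the relation $\kappa h'=1-nh-c^2=-\beta$; the paper then establishes $\beta>0$ for $\kappa>0$ by an explicit power-series computation (Lemma \ref{beta_positive}), rewriting the numerator of $\beta$ as a series whose coefficients $\sum_{p+q=k}\tfrac{2(p-q)^2}{(2p+n)(2q+n)}a_pa_q$ are manifestly nonnegative. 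You instead differentiate the Riccati equation once more, observe that at any interior critical point $h''=-2h^2<0$ (so every critical point is a strict local maximum), and combine this with the expansion $h(\kappa)=\tfrac1n-\tfrac{\kappa^2}{n^2(n+2)}+O(\kappa^4)$ at the origin to exclude critical points altogether. This is precisely the alternative proof the paper sketches in the remark following Lemma \ref{beta_positive}, attributed to \cite{zhou2005new}, so it is a legitimate and self-contained route; your diagnosis that global concavity of $c$ fails and that the critical-point analysis is the right substitute is also accurate. The trade-off is that your argument is shorter and avoids the combinatorial identity, while the paper's series computation delivers the positivity of $\beta$ in a quantitative, termwise form; note that $\beta>0$ is reused later (e.g.\ in Proposition \ref{asymptotic_rate}, where the asymptotic rate is $\Lambda_\kappa\beta$ and one needs $\sigma-\beta>0$ as well), but your ODE identity $\beta=-\kappa h'>0$ supplies that fact just as well. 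Two cosmetic points: the boundary behavior $h(\infty)=0$ needs only the trivial bound $c\leqslant 1$ rather than Laplace's method, and the nonvanishing of $h$ at a critical point, which you need for $h''<0$, is already forced by the critical-point relation $nh+\kappa^2h^2=1$, as you implicitly use.
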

\begin{proof}

Let us denote $\widetilde{\sigma}(\kappa)=\frac{c(\kappa)}{\kappa}$.
A simple Taylor expansion gives $\widetilde{\sigma}(\kappa)\underset{\kappa \rightarrow 0}{\rightarrow}\frac1n$. 
Since the function $\widetilde{\sigma}$ tends to $0$ as~\mbox{$\kappa \rightarrow +\infty $} (because $c(\kappa)\leqslant 1$), it is sufficient to prove that it is decreasing. 
Indeed the function is then a one-to-one correspondence from~$\mathbb{R}_+^*$ to $(0,\frac1n)$, and the compatibility condition for $\kappa >0$ is exactly solving $\sigma =\widetilde{\sigma}(\kappa)$.

But we have (after one integration by parts) that $\widetilde{\sigma}'(\kappa)=\frac1{\kappa}(1-n\widetilde{\sigma}(\kappa)-c(\kappa)^2)$, which, by the following lemma is negative for $\kappa >0$. 
\end{proof}

\begin{lemma}
Define $\beta =c(\kappa)^2+n\widetilde{\sigma}(\kappa)-1$. Then for any $\kappa >0$, we have $\beta >0$.
\label{beta_positive} 
\end{lemma}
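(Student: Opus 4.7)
My strategy is to derive a Riccati-type ODE for $c(\kappa)$, and then show that at any hypothetical positive zero of $\beta$, the derivative $\beta'$ is forced to be strictly positive, yielding a contradiction. To obtain the ODE, I set $Z(\kappa)=\int_0^\pi e^{\kappa\cos\theta}\sin^{n-2}\theta\,\mathrm d\theta$, so that $c=Z'/Z$ and $c'=Z''/Z-c^2=\langle\cos^2\theta\rangle_\kappa-c^2$. An integration by parts using $(\sin^{n-1}\theta)'=(n-1)\sin^{n-2}\theta\cos\theta$ gives
\begin{equation*}
c(\kappa)\,Z(\kappa)=\frac{\kappa}{n-1}\int_0^\pi\sin^n\theta\,e^{\kappa\cos\theta}\,\mathrm d\theta,
\end{equation*}
and writing $\sin^n\theta=(1-\cos^2\theta)\sin^{n-2}\theta$ leads to the Riccati identity
\begin{equation*}
c'(\kappa)=1-c(\kappa)^2-(n-1)\frac{c(\kappa)}{\kappa}.
\end{equation*}

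The decisive step is the analysis at a hypothetical zero of $\beta$. Suppose $\beta(\kappa_0)=0$ for some $\kappa_0>0$. Then $1-c(\kappa_0)^2=nc(\kappa_0)/\kappa_0$, and substituting into the Riccati equation yields $c'(\kappa_0)=c(\kappa_0)/\kappa_0$. Differentiating $\beta=c^2+nc/\kappa-1$ and plugging in this value,
\begin{equation*}
\beta'(\kappa_0)=\Bigl(2c(\kappa_0)+\frac{n}{\kappa_0}\Bigr)c'(\kappa_0)-\frac{nc(\kappa_0)}{\kappa_0^2}=\frac{2c(\kappa_0)^2}{\kappa_0}>0.
\end{equation*}
The strict positivity uses $c(\kappa_0)>0$, which holds because $c'(\kappa)=\mathrm{Var}_\kappa(\cos\theta)>0$, so $c$ is strictly increasing from $c(0)=0$.

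To conclude by contradiction, I still need to know that $\beta>0$ on some right-neighborhood of $0$. Feeding the ansatz $c(\kappa)=\kappa/n+c_3\kappa^3+O(\kappa^5)$ into the Riccati equation gives $(n+2)c_3=-1/n^2$, hence $c(\kappa)=\kappa/n-\kappa^3/(n^2(n+2))+O(\kappa^5)$ and
\begin{equation*}
\beta(\kappa)=\frac{2\kappa^2}{n^2(n+2)}+O(\kappa^4)>0\quad\text{for small }\kappa>0.
\end{equation*}
Now if $\beta$ admitted a zero in $(0,+\infty)$, picking the smallest one $\kappa_0$ would give $\beta>0$ on $(0,\kappa_0)$ with $\beta(\kappa_0)=0$, forcing $\beta'(\kappa_0)\leqslant 0$ and contradicting the previous paragraph. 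The main obstacle lies in spotting the right reformulation: it is not a priori obvious that the single pointwise constraint $\beta(\kappa_0)=0$ should pin down $c'(\kappa_0)$ exactly, but the Riccati equation provides precisely this extra piece of information, and the resulting expression for $\beta'(\kappa_0)$ is clean enough to close the argument.
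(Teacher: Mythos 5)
Your proof is correct, but it takes a different route from the one the paper presents as its main argument. You derive the Riccati identity $c'=1-c^2-(n-1)c/\kappa$ (your integration by parts is valid since $\sin^{n-1}\theta$ vanishes at both endpoints for $n\geqslant 2$), observe that at a hypothetical zero $\kappa_0$ of $\beta$ the constraint $1-c^2=nc/\kappa_0$ pins down $c'(\kappa_0)=c(\kappa_0)/\kappa_0$ and hence $\beta'(\kappa_0)=2c(\kappa_0)^2/\kappa_0>0$, and close the loop with the expansion $\beta=\frac{2\kappa^2}{n^2(n+2)}+O(\kappa^4)$ near the origin and a first-zero argument; all three ingredients check out, including the strict positivity $c(\kappa_0)>0$ via $c'=\mathrm{Var}_\kappa(\cos\theta)>0$. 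The paper instead writes $\beta\kappa[1]_\kappa^2$ as a power series in $\kappa$ and, using the recursion $(2p+2)a_{p+1}=a_p/(2p+n)$, exhibits the coefficient of $\kappa^{2k+1}$ as $\sum_{p+q=k}\frac{2(p-q)^2}{(2p+n)(2q+n)}a_pa_q\geqslant 0$ — a sum-of-squares identity that gives positivity term by term and an explicit series for $\beta$. Your Riccati equation is in fact equivalent to the paper's identity $\widetilde{\sigma}'=-\beta/\kappa$, and your barrier argument is close in spirit to the alternative proof the paper sketches in the remark following the lemma (computing $\widetilde{\sigma}''$ at critical points of $\widetilde{\sigma}$, after Zhou et al.); what your version buys is a self-contained, elementary argument avoiding the series combinatorics, while the paper's main proof buys an explicit positive expansion of $\beta$ with no need for a continuity/first-zero argument.
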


\begin{proof}

Define $[\gamma (\cos \theta)]_\kappa =\int_0^\pi \gamma (\cos\theta) \, e^{\kappa \cos\theta}\sin^{n-2}\theta \, \mathrm d\theta $. 

Then we have by definition $\beta = \dfrac{\kappa [\cos \theta ]_\kappa^2 + n [\cos \theta ]_\kappa [1]_\kappa - \kappa [1]_\kappa^2}{\kappa [1]_\kappa^2}$. 
So we only have to show that the numerator is positive. 
We will prove in fact that the Taylor expansion of this term in $\kappa $ has only positive terms.

We have, if we denote $a_p=\frac1{(2p)!}\int_0^\pi \cos^{2p}\theta \sin^{n-2}\theta \, \mathrm d\theta \geqslant 0$,
\begin{equation*}
[1]_\kappa =\sum_{p=0}^\infty a_p \kappa^{2p}\quad\text{ and }\quad[\cos\theta ]_\kappa =\sum_{p=0}^\infty (2p+2)\, a_{p+1}\kappa^{2p+1}
\end{equation*}
Now doing an integration by part in the definition of $a_{p+1}$, we get 
\begin{equation*}
a_{p+1}=\dfrac{2p+1}{n-1} \left(\dfrac{a_p}{(2p+1)(2p+2)} - a_{p+1}\right),
\end{equation*}
which gives 
\begin{equation}
\label{induction_ap}
(2p+2)\, a_{p+1}=\dfrac{a_p}{2p+n}.
\end{equation}
We have, for $\kappa >0$,
\begin{align*}
 \beta \kappa [1]_\kappa^2 &= \sum_{k=0}^\infty \left(\sum_{p+q=k-1\hspace{-0,7cm}}(2p+2)\, a_{p+1} (2q+2)\, a_{q+1} + \sum_{p+q=k\hspace{-0,4cm}}n(2p+2)\, a_{p+1} a_{q} - a_p a_q\right) \kappa^{2k+1}\\
&= \sum_{k=0}^\infty \left(\sum_{p+q=k, p\geqslant 1}2p\, a_{p} \tfrac{1}{2q+n} a_q + \sum_{p+q=k}(\tfrac{n}{2p+n}-1)\, a_{p} a_{q} \right) \kappa^{2k+1}\\
&= \sum_{k=0}^\infty \left(\sum_{p+q=k}2p\left(\tfrac{1}{2q+n} - \tfrac{1}{2p+n}\right)\, a_{p} a_{q} \right) \kappa^{2k+1}\\
&=\sum_{k=0}^\infty \left(\sum_{p+q=k}\left(p(\tfrac{1}{2q+n} - \tfrac{1}{2p+n}) +q(\tfrac{1}{2p+n} - \tfrac{1}{2q+n})\right) \, a_{p} a_{q} \right) \kappa^{2k+1}\\
&= \sum_{k=0}^\infty \left(\sum_{p+q=k}\tfrac{2(p-q)^2}{(2p+n)(2q+n)}\, a_{p} a_{q} \right) \kappa^{2k+1}
\end{align*}
So we finally get 
\begin{equation*}
 \beta =\left(\sum_{p=0}^\infty a_p \kappa^{2p}\right)^{-1}\sum_{k=0}^\infty \left(\sum_{p+q=k}\tfrac{2(p-q)^2}{(2p+n)(2q+n)}\, a_{p} a_{q} \right) \kappa^{2k},
\end{equation*}
which gives that $\beta >0$ when $\kappa >0$.
\end{proof}

\begin{remark}
We can do another proof, following an argument of \cite{zhou2005new}, which does not need to compute explicitly $\beta $. 

The idea is that we compute $\widetilde{\sigma}''=(n-1)\frac{\beta}{\kappa^2}-2\widetilde{\sigma}(\widetilde{\sigma}-\beta)$, 
so we see (except in the case $\kappa =0$) that if $\widetilde{\sigma}'=-\frac{\beta}{\kappa}=0$, then $\widetilde{\sigma}''<0$ (indeed, we will easily see in \eqref{cos2bis} that $\widetilde{\sigma}-\beta $ is positive). 
For the case $\kappa =0$, we can compute the Taylor expansion of~$\widetilde{\sigma}$ up to order $2$: 
$\widetilde{\sigma}(\kappa)=\frac1n-\frac1{n^2(n+2)}\kappa^2+O(\kappa^4)$. 
So we have that any critical point of $\widetilde{\sigma}$ is a maximum. 
Since there is a local maximum at $\kappa =0$ then the function is decreasing.
\end{remark}
We can have an asymptotic expansion of the order parameter $c(\kappa (\sigma))$ as $\sigma $ reaches the critical value $\frac1n$.
Indeed we have that $\sigma -\frac1n\sim-\frac1{n^2(n+2)}\kappa (\sigma)^2$ by the expansion of $\widetilde{\sigma}$ in the previous remark.
So 
\begin{equation}
c(\kappa (\sigma))\sim\tfrac1n \kappa (\sigma)\sim\sqrt{(n+2)(\tfrac1n-\sigma)}\text{ as }\sigma \rightarrow \tfrac1n.
\label{expansion_coefficients}
\end{equation}
\begin{prop} Minimum of the free energy
\label{minimum_free_energy}
\begin{itemize}
\item If $\sigma \geqslant \frac1n$, the minimum of the free energy is $0$, only reached by the uniform distribution. Any solution converges to the uniform distribution in any $H^s$ norm.
\item If $\sigma <\frac1n$, the minimum of the free energy is negative, only reached by any non-isotropic equilibrium $M_{\kappa (\sigma)\Omega}$.
\end{itemize}
\end{prop}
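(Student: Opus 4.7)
The plan is to reduce the minimisation of $\mathcal F$ over probability densities to a one-dimensional problem by a relative-entropy identity against Fisher--Von Mises distributions with the \emph{same flux}. Given a probability density $f$, set $J=J[f]$, pick $\kappa\geqslant 0$ with $c(\kappa)=|J|$ and $\Omega=J/|J|$ (arbitrary if $J=0$), so that $M:=M_{\kappa\Omega}$ satisfies $J[M]=J$. Using $\sigma\ln M=\kappa\,\omega\cdot\Omega-\sigma\ln Z(\kappa)$ where $Z(\kappa)=\int_\mathbb{S} e^{\kappa\,\omega\cdot\Omega}\,\mathrm d\omega$, I would write
\begin{equation*}
\sigma\int_\mathbb{S} f\ln f=\sigma\int_\mathbb{S} f\ln\tfrac{f}{M}+\sigma\int_\mathbb{S} f\ln M=\sigma\,\mathcal H(f|M)+\kappa\,|J|-\sigma\ln Z(\kappa),
\end{equation*}
which yields the key identity $\mathcal F(f)=\sigma\,\mathcal H(f|M)+F(\kappa)$, with $F(\kappa):=\mathcal F(M_{\kappa\Omega})=\sigma\kappa c(\kappa)-\sigma\ln Z(\kappa)-\tfrac12 c(\kappa)^2$ depending only on $\kappa$. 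Since the relative entropy $\mathcal H(f|M)$ is nonnegative and vanishes only when $f=M$, one immediately gets $\inf_f\mathcal F(f)=\inf_{\kappa\geqslant 0}F(\kappa)$, with equality achieved exactly by the Fisher--Von Mises densities whose concentration parameter minimises $F$.

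I would then analyse $F$ on $[0,\infty)$ by computing
\begin{equation*}
F'(\kappa)=c'(\kappa)\bigl(\sigma\kappa-c(\kappa)\bigr)=\kappa\,c'(\kappa)\bigl(\sigma-\widetilde\sigma(\kappa)\bigr),
\end{equation*}
using $F(0)=0$. The factor $c'(\kappa)=\mathrm{Var}_{M_{\kappa\Omega}}(\omega\cdot\Omega)$ is strictly positive, and the previous subsection established that $\widetilde\sigma$ is strictly decreasing from $\widetilde\sigma(0^+)=\tfrac1n$ to $0$. In the case $\sigma\geqslant\tfrac1n$ one then has $\sigma>\widetilde\sigma(\kappa)$ for all $\kappa>0$, so $F$ is strictly increasing and its infimum $0$ is attained only at $\kappa=0$, giving $\mathcal F(f)\geqslant 0$ with equality iff $f\equiv 1$. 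In the case $\sigma<\tfrac1n$, $F'$ vanishes exactly at $\kappa=\kappa(\sigma)$, is negative before and positive after, so $F$ has a unique minimum at $\kappa(\sigma)$, and $F(\kappa(\sigma))<F(0)=0$; the minimisers of $\mathcal F$ are precisely the $M_{\kappa(\sigma)\Omega}$, $\Omega\in\mathbb S$.

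For the convergence statement in the supercritical case $\sigma\geqslant\tfrac1n$, I would invoke the LaSalle principle (Proposition \ref{lasalle}): $f(t)$ approaches in every $H^s$ norm the nonempty set $\mathcal E_\infty$ of $C^\infty$ equilibria at the limiting free-energy level $\mathcal F_\infty$. The compatibility analysis forces every equilibrium to be the uniform distribution $f\equiv 1$, hence $\mathcal E_\infty=\{1\}$ and $f(t)\to 1$ in every $H^s$. The only delicate point is the monotonicity $c'(\kappa)>0$, but that is immediate from the variance interpretation and the fact that $\omega\cdot\Omega$ is non-constant on $\mathbb S$; the main conceptual step is the relative-entropy reduction, which turns a functional minimisation into a one-variable calculus exercise tied to the function $\widetilde\sigma$ already studied.
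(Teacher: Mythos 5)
Your argument is correct in substance, but it follows a genuinely different route from the paper. The paper's proof is dynamical: it invokes the LaSalle principle (Proposition \ref{lasalle}) to show that the infimum of $\mathcal F$ over positive smooth densities equals its infimum over the (already classified, compact) set of equilibria, and then, for $\sigma<\frac1n$, rules out the uniform state as global minimizer by exhibiting the test perturbation $\mathcal F(1+\varepsilon\,\omega\cdot\Omega)\sim\frac1n(\sigma-\frac1n)\varepsilon^2<0$. Your proof is static and variational: the flux-matched relative-entropy decomposition $\mathcal F(f)=\sigma\,\mathcal H(f|M_{\kappa\Omega})+F(\kappa)$, with $\kappa$ chosen so that $c(\kappa)=|J[f]|$, reduces the whole minimisation to the one-variable function $F$, whose derivative $F'(\kappa)=c'(\kappa)(\sigma\kappa-c(\kappa))=\kappa c'(\kappa)(\sigma-\widetilde\sigma(\kappa))$ is governed by the same function $\widetilde\sigma$ already shown to be strictly decreasing. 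This buys you more than the paper states: you get the minimiser without any well-posedness or long-time machinery, you identify the equality cases cleanly (equality forces $\mathcal H(f|M)=0$, hence $f=M_{\kappa\Omega}$ with $\kappa$ minimising $F$), and you obtain the full monotonicity profile of $\kappa\mapsto\mathcal F(M_{\kappa\Omega})$. The convergence claim for $\sigma\geqslant\frac1n$ is handled identically in both proofs, via LaSalle and the uniqueness of the equilibrium.

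Two small points to fix or make explicit. First, there is a slip of a factor $\sigma$: since $M_{\kappa\Omega}=Z(\kappa)^{-1}e^{\kappa\,\omega\cdot\Omega}$, one has $\sigma\ln M=\sigma\kappa\,\omega\cdot\Omega-\sigma\ln Z(\kappa)$ and hence $\sigma\int_\mathbb{S} f\ln M=\sigma\kappa|J|-\sigma\ln Z(\kappa)$; your displayed intermediate identity drops the $\sigma$ in front of $\kappa|J|$, although your final expression $F(\kappa)=\sigma\kappa c(\kappa)-\sigma\ln Z(\kappa)-\frac12 c(\kappa)^2$ and the derivative computation are the correct ones, so the inconsistency is only in the intermediate line. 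Second, you should justify that the flux can always be matched: for any probability density $f$ one has $|J[f]|<1$ (strictly, since $1-|J[f]|=\int_\mathbb{S}(1-\omega\cdot\Omega)f\,\mathrm d\omega>0$), and $c$ is a continuous increasing bijection from $[0,+\infty)$ onto $[0,1)$ because $c'(\kappa)=\mathrm{Var}_{M_{\kappa\Omega}}(\omega\cdot\Omega)>0$ and $c(\kappa)\rightarrow 1$ as $\kappa\rightarrow+\infty$; this guarantees the existence and uniqueness of the $\kappa$ you use. With these points made explicit the proof is complete.
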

\begin{proof}
By LaSalle principle (Proposition \ref{lasalle}), we have that 
\begin{equation*}
\min_{f\in C^\infty (\mathbb{S}),\, f>0}\mathcal F(f)=\min_{f\in C^\infty (\mathbb{S}),\, f>0,\, \mathcal D(f)=0}\mathcal F(f).
\end{equation*}
Indeed for any positive initial condition $f$ in $C^\infty (\mathbb{S})$, there exists an equilibrium $f_\infty $ such that $\mathcal F(f_\infty)=\mathcal F_\infty \leqslant \mathcal F(f)$. 
This gives 
\begin{equation*}
\inf_{f\in C^\infty (\mathbb{S}),\, f>0}\mathcal F(f)=\inf_{f\in C^\infty (\mathbb{S}),\, f>0,\, \mathcal D(f)=0}\mathcal F(f).
\end{equation*}
Since the set of equilibria is compact (either a single point or one point and a manifold homeomorphic to $\mathbb{S}$), this infimum is a minimum.

Furthermore, if $f_0$ is not an equilibrium, then $\mathcal D(f_0)>0$, and then $\mathcal F(f(t))$ is decreasing in the neighborhood of $t=0$. So the minimum of $\mathcal F$ cannot be reached for $f_0$.

In the case $\sigma \geqslant \frac1n$, this gives the result since the only equilibrium is the constant function $1$. By LaSalle principle, we also get that the solution is converging to in any $H^s$ norm. 

In the case $\sigma <\frac1n$, we have that $\mathcal F(1+\varepsilon \omega \cdot \Omega)\sim\frac1n(\sigma -\frac1n)\varepsilon^2$ for a fixed unit vector~$\Omega \in \mathbb{S}$, so there exists $f_0$ such that $\mathcal F(f_0)<0$. 
Then the uniform distribution cannot be a global minimizer. 
Since $\mathcal F(M_{\kappa (\sigma)\Omega})$ is independent of $\Omega $, we get that this value is the minimum.
\end{proof}

\section{Convergence to equilibrium}
\label{convergence}
In this section, we establish and study the convergence of the solution to an equilibrium for any initial condition, in the three different regimes, depending wether $\sigma $ is greater, less, or equal to $\frac1n$.
\subsection{A new entropy, application to the subcritical case $\sigma >\frac1n$.}

In this section we derive a convex entropy, which shows global decay to the uniform distribution in the case $\sigma >\frac1n$.

We define on $\dot H^{-\frac{n-1}2}(\mathbb{S})$ the norm $\|\cdot \|_{\widetilde H^{-\frac{n-1}2}}$ by $\|g\|^2_{\widetilde H^{-\frac{n-1}2}}=\int_{\mathbb{S}}g\widetilde{\Delta}_{n-1}^{-1}g$, where the conformal Laplacian~$\widetilde{\Delta}_{n-1}$ is defined by \eqref{conformal_laplacian}. 
This norm is equivalent to $\|\cdot \|_{\dot H^{-\frac{n-1}2}}$. 
We also define $\|\cdot \|_{\widetilde H^{-\frac{n-3}2}}$ by $\|g\|^2_{\widetilde H^{-\frac{n-3}2}}=\int_{\mathbb{S}}\Delta g\widetilde{\Delta}_{n-1}^{-1}g$, and this norm is equivalent to the $\|\cdot \|_{\dot H^{-\frac{n-3}2}}$ norm.

Taking $h=\widetilde{\Delta}_{n-1}^{-1}g$ in the weak formulation \eqref{weak_solution_meanzero}, and using the last part of Lemma~\ref{main_lemma}, we obtain a conservation relation:
\begin{equation}
 \frac12\frac{\mathrm d}{\mathrm dt}\|g\|^2_{\widetilde H^{-\frac{n-1}2}}=-\sigma \|g\|^2_{\widetilde H^{-\frac{n-3}2}} + \frac1{(n-2)!}|J[g]|^2.
\label{conservation_relation}
\end{equation}
We remark that this is a conservation law between quadratic quantities, as it would be the case for a linear equation.

Since the component of $g$ on the space of spherical harmonics of degree $1$ is given by $n \omega \cdot J[g]$, a simple computation shows that the contribution to $\|g\|^2_{\widetilde H^{-\frac{n-1}2}}$ of this component is equal to $\frac{n}{(n-1)!}|J[g]|^2$. 
Then the last term of the conservation relation~\eqref{conservation_relation} is bounded by~$\frac{n-1}{n}\|g\|^2_{\widetilde H^{-\frac{n-1}2}}$. 
Together with the Poincaré inequality~$\|g\|^2_{\widetilde H^{-\frac{n-3}2}}\geqslant (n-1) \|g\|^2_{\widetilde H^{-\frac{n-3}2}}$, we get the following estimate: 
\begin{equation*}
\frac12\frac{\mathrm d}{\mathrm dt}\|g\|^2_{\widetilde H^{-\frac{n-1}2}}\leqslant (n-1)(\tfrac1n-\sigma)\|g\|^2_{\widetilde H^{-\frac{n-1}2}}.
\end{equation*}
This gives in the case $\sigma >\frac1n$ an exponential decay of rate $(n-1)(\sigma -\frac1n)$ for the norm~$\|\cdot \|_{\widetilde H^{-\frac{n-1}2}}$: 
\begin{equation*}
\|g\|_{\widetilde H^{-\frac{n-1}2}}\leqslant \|g_0\|_{\widetilde H^{-\frac{n-1}2}}\exp(-(n-1)(\sigma -\tfrac1n)t).
\end{equation*}
In the general case, if $f_0\in H^s(\mathbb{S})$ with $s>-\frac{n-1}2$, we use the estimate \eqref{high-low}: 
\begin{equation*}
\frac12\frac{\mathrm d}{\mathrm dt} \|g\|^2_{\dot{H}^{s}}+\sigma \|g\|^2_{\dot{H}^{s+1}}\leqslant \tfrac{C_0}{(N+1)(N+n-1)}\|g\|^2_{\dot{H}^{s+1}}+(n-1)^s|J[g]|^2+C_0\|f^N-1\|^2_{\dot{H}^{s}}.
\end{equation*}
Now we have, since $f$ is a probability measure,
\begin{equation*}
(n-1)^s|J[g]|^2+\|f^N-1\|^2_{\dot{H}^{s}}\leqslant K_N\|f^N-1\|^2_{\widetilde H^{-\frac{n-1}2}}\leqslant K_N\|g_0\|^2_{\widetilde H^{-\frac{n-1}2}}e^{-2(n-1)(\sigma -\frac1n)t},
\end{equation*}
the first inequality being the equivalence between norms in finite dimension. 
For any~$\varepsilon <\frac1n$, taking $N$ sufficiently large, together with Poincaré inequality we get
\begin{equation*}
\frac12\frac{\mathrm d}{\mathrm dt} \|g\|^2_{\dot{H}^{s}}+(n-1)(\sigma -\varepsilon)\|g\|^2_{\dot{H}^{s}}\leqslant C\|g_0\|^2_{\widetilde H^{-\frac{n-1}2}}e^{-2(n-1)(\sigma -\frac1n)t},
\end{equation*}
where the constant $C$ depends only on $s$.

Solving this equation, we get 
\begin{equation*}
\|g\|^2_{\dot{H}^{s}}\leqslant \|g_0\|^2_{\dot{H}^{s}}e^{-2(n-1)(\sigma -\varepsilon)t}+\tfrac{C}{(n-1)(\frac1n-\varepsilon)}\|g_0\|^2_{\widetilde H^{-\frac{n-1}2}}e^{-2(n-1)(\sigma -\frac1n)t}.
\end{equation*}
Taking for example $\varepsilon =\frac1{2n}$, since $s>-\frac{n-1}2$, we get
\begin{equation*}
\|g\|^2_{\dot{H}^{s}}\leqslant (1+2\widetilde C\tfrac{n}{n-1})\|g_0\|^2_{\dot{H}^{s}}e^{-2(n-1)(\sigma -\frac1n)t}.
\end{equation*}
In summary, we have the following theorem:

\begin{thm} New entropy.
\label{new_entropy}
For a given probability density function $f$, we define the quantities $\mathcal{H}(f)=\|f-1\|^2_{\widetilde H^{-\frac{n-1}2}}$ and $\widetilde{\mathcal D}(f)=2\sigma \|f-1\|^2_{\widetilde H^{-\frac{n-3}2}} - \frac2{(n-2)!}|J[f]|^2$. 

We have a conservation relation, for any solution $f$ of Doi equation \eqref{Doi_eq}:
\begin{equation}
 \frac{\mathrm d}{\mathrm dt}\mathcal{H}(f)+\widetilde{\mathcal D}(f)=0.
\label{conservation_free_energy}
\end{equation}
When $\sigma \geqslant \frac1n$, the term $\widetilde{\mathcal D}(f)$ is nonnegative, so the new entropy $\mathcal{H}(f)$ is decreasing in time.

Furthermore, if $\sigma >\frac1n$, then in any Sobolev space $H^s(\mathbb{S})$ with $s\geqslant -\frac{n-1}2$, we have global exponential decay of the solution to the uniform distribution, with rate given by $(n-1)(\sigma -\frac1n)$.

More precisely there is a constant $C$ depending only on $s$ such that for all initial condition $f_0\in H^s(\mathbb{S})$, we have
\begin{equation*}
\|f-1\|_{H^{s}}\leqslant C\|f_0-1\|_{H^{s}}e^{-(n-1)(\sigma -\tfrac1n)t}.
\end{equation*}
\end{thm}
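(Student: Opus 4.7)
The plan is to produce the conservation relation first, then exploit it to get exponential decay in the weak norm $\|\cdot\|_{\widetilde H^{-\frac{n-1}2}}$, and finally bootstrap to any Sobolev norm $H^s$ with $s\geqslant -\frac{n-1}2$ by a high/low mode argument.

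First, I would set $g=f-1$ and plug the test function $h=\widetilde{\Delta}_{n-1}^{-1}g$ into the weak formulation \eqref{weak_solution_meanzero}. The cubic term $\langle g,J[g]\cdot\nabla_\omega h\rangle$ is exactly $J[g]\cdot\int_{\mathbb{S}}g\,\nabla\widetilde{\Delta}_{n-1}^{-1}g$, which vanishes by the third part of Lemma \ref{main_lemma} (the exact commutator identity is precisely what makes this entropy work). The Dirichlet term becomes $-\sigma\|g\|^2_{\widetilde H^{-\frac{n-3}2}}$ by the definition of that norm, and the cross term gives $\frac{1}{(n-2)!}|J[g]|^2$ once one uses the spectral definition \eqref{spectral_conformal_laplacian} to evaluate $\widetilde{\Delta}_{n-1}^{-1}$ on the degree-one spherical harmonic $n\,\omega\cdot J[g]$. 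This yields \eqref{conservation_relation}, i.e.\ the conservation relation for $\mathcal H(f)$ and $\widetilde{\mathcal D}(f)$.

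Next, to show $\widetilde{\mathcal D}(f)\geqslant 0$ when $\sigma\geqslant\frac1n$, I would note that since the projection of $g$ onto the degree-one harmonics is $n\omega\cdot J[g]$, its contribution to $\|g\|^2_{\widetilde H^{-\frac{n-1}2}}$ equals $\frac{n}{(n-1)!}|J[g]|^2$, so $\frac{1}{(n-2)!}|J[g]|^2 \leqslant \frac{n-1}{n}\|g\|^2_{\widetilde H^{-\frac{n-1}2}}$. Combining with the Poincaré-type inequality $\|g\|^2_{\widetilde H^{-\frac{n-3}2}}\geqslant (n-1)\|g\|^2_{\widetilde H^{-\frac{n-1}2}}$ (immediate from the spectral definition since each extra factor contributes at least $n-1$ on harmonics of degree $\geqslant 1$) gives
\begin{equation*}
\tfrac12\tfrac{\mathrm d}{\mathrm dt}\|g\|^2_{\widetilde H^{-\frac{n-1}2}}\leqslant (n-1)(\tfrac1n-\sigma)\|g\|^2_{\widetilde H^{-\frac{n-1}2}},
\end{equation*}
from which both the monotonicity and the exponential decay in $\widetilde H^{-\frac{n-1}2}$ at rate $(n-1)(\sigma-\frac1n)$ follow by Grönwall.

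Finally, for the bootstrap to $H^s$ with $s>-\frac{n-1}2$, I would revisit the high/low mode inequality \eqref{high-low} used in Proposition \ref{regularity_boundedness}, splitting $g=f^N-1+g^N$ with $N$ large enough that $\frac{C_0}{(N+1)(N+n-1)}\leqslant \frac{\sigma}{2}$. The point is that the low-mode error $\|f^N-1\|^2_{\dot H^s}$ and $|J[g]|^2$ involve only finitely many spherical harmonics, so by equivalence of norms on this finite-dimensional space they are controlled by $\|g\|^2_{\widetilde H^{-\frac{n-1}2}}$, which already decays exponentially at rate $2(n-1)(\sigma-\frac1n)$. This turns \eqref{high-low} into a linear inequality of the form $\frac12\frac{\mathrm d}{\mathrm dt}\|g\|^2_{\dot H^s}+(n-1)(\sigma-\varepsilon)\|g\|^2_{\dot H^s}\leqslant C\|g_0\|^2_{\widetilde H^{-\frac{n-1}2}}e^{-2(n-1)(\sigma-\frac1n)t}$ for any small $\varepsilon$, and Grönwall yields the claimed bound after using $\|g_0\|_{\widetilde H^{-\frac{n-1}2}}\lesssim\|g_0\|_{\dot H^s}$ whenever $s\geqslant -\frac{n-1}2$.

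The main obstacle I foresee is not the algebra but ensuring that the chosen test function $\widetilde{\Delta}_{n-1}^{-1}g$ is actually admissible in the weak formulation when $g$ is only in $H^s$ for small $s$. This is really a matter of verifying $\widetilde{\Delta}_{n-1}^{-1}g\in \dot H^{-s+1}$, which holds by the degree $-(n-1)$ mapping property of $\widetilde{\Delta}_{n-1}^{-1}$ together with the instantaneous regularization of Theorem \ref{generalresults}; a mollification or density argument handles the endpoint $s=-\frac{n-1}2$.
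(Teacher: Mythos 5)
Your proposal is correct and follows essentially the same route as the paper: the conservation relation is obtained by testing with $\widetilde{\Delta}_{n-1}^{-1}g$ and invoking the exact commutator identity of Lemma \ref{main_lemma}, the sign of $\widetilde{\mathcal D}$ comes from the degree-one contribution $\frac{n}{(n-1)!}|J[g]|^2$ combined with the spectral Poincaré inequality, and the passage to general $s\geqslant-\frac{n-1}2$ uses the same high/low mode splitting as in \eqref{high-low} with the low modes controlled by the already-decaying $\widetilde H^{-\frac{n-1}2}$ norm. Your additional remark on the admissibility of the test function (via the mapping properties of $\widetilde{\Delta}_{n-1}^{-1}$ and instantaneous regularization) is a point the paper leaves implicit, but it does not change the argument.
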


Let us do a small remark here. 
Actually this conservation relation is true for any solution, without any positivity condition. 
We only need the mean of $f$ to be $1$. 
And since we have existence and uniqueness in small time for any initial condition, with the same instantaneous regularity results (only valid for a short time existence), we get that the solution belongs to $H^{-\frac{n-1}2}(\mathbb{S})$ at some time. 
But the conservation relation gives then that we have a global solution. 
So we can state a stronger theorem of existence and uniqueness: 

\begin{thm}
Given an initial condition $f_0$ in $H^s(\mathbb{S})$ (not necessarily nonnegative), there exists a unique weak solution~$f$ of \eqref{Doi_eq} such that $f(0)=f_0$. 
This solution is global in time (the definition~\ref{definition_weak_solution} is valid for any time $T>0$). Moreover, $f$ is a classical solution, belonging to $C^\infty ((0,+\infty)\times \mathbb{S})$ (and even analytic in space, see Appendix \ref{analyticity}).
\end{thm}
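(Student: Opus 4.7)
The plan is to combine the local theory of Section \ref{general_results} with the conservation relation \eqref{conservation_relation} of Theorem \ref{new_entropy}, noting that its derivation---testing the weak formulation \eqref{weak_solution_meanzero} against $h=\widetilde{\Delta}_{n-1}^{-1}g$ and invoking the third part of Lemma \ref{main_lemma}---used only that $g=f-1$ has mean zero, never that $f\geqslant 0$.

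First I would extract short-time existence from the Galerkin construction and uniqueness from Proposition \ref{continuity}, neither of which assumes $f\geqslant 0$; this yields a weak solution on some $[0,T_1]$ with $T_1$ depending only on $\|f_0-1\|_{\dot H^s}$. Testing against the constant function $1$ in \eqref{weak_solution} preserves the mean, so $g(t)$ stays mean zero for all $t\in [0,T_1]$. Next I would invoke the bootstrap step of Proposition \ref{regularity_boundedness}: the iteration from $L^2((0,T_1),H^{s+1}(\mathbb{S}))$ up through $H^p(\mathbb{S})$ for every $p$ uses only local well-posedness in successively higher Sobolev spaces together with uniqueness, and not the positivity-based bound $|J[f]|\leqslant 1$. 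This places $g(t_*)$ into $\dot H^{-\frac{n-1}2}(\mathbb{S})$ for any small $t_*>0$, which is exactly the regularity needed to make $\widetilde{\Delta}_{n-1}^{-1}g(t_*)$ an admissible test function in \eqref{weak_solution_meanzero}.

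From time $t_*$ onwards, Theorem \ref{new_entropy} becomes applicable. Even when $\sigma<\tfrac1n$ and $\widetilde{\mathcal D}(f)$ may be negative, the estimate $\tfrac1{(n-2)!}|J[g]|^2\leqslant \tfrac{n-1}n\|g\|^2_{\widetilde H^{-(n-1)/2}}$ for the degree-$1$ spectral component, combined with the Poincaré-type inequality $\|g\|^2_{\widetilde H^{-(n-3)/2}}\geqslant (n-1)\|g\|^2_{\widetilde H^{-(n-1)/2}}$, turns \eqref{conservation_relation} into
\begin{equation*}
\tfrac{\mathrm d}{\mathrm dt}\mathcal H(f)\leqslant 2(n-1)\bigl|\tfrac1n-\sigma\bigr|\mathcal H(f).
\end{equation*}
Grönwall's lemma keeps $\|f(t)-1\|_{\widetilde H^{-(n-1)/2}}$ finite on every bounded interval, regardless of the sign of $\tfrac1n-\sigma$; in particular $|J[f(t)]|$ remains bounded by some constant $J_T$ on each $[0,T]$.

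With this a priori control on $J[f]$, the basic energy estimate \eqref{estimate_solution} simplifies to $\tfrac{\mathrm d}{\mathrm dt}\|g\|^2_{\dot H^s}\leqslant 2C_0J_T\|g\|^2_{\dot H^s}+2(n-1)^sJ_T^2$, which rules out finite-time blow-up and extends the local solution to all times. Smoothness and analyticity for $t>0$ then follow verbatim from Proposition \ref{regularity_boundedness} and Appendix \ref{analyticity}. The main subtlety I expect is the careful justification that $\widetilde{\Delta}_{n-1}^{-1}g$ is admissible as a test function without any positivity-based a priori bound: the instantaneous regularization must be performed first, and this is precisely the step that lets positivity be dispensed with.
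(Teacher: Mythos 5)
Your proposal is correct and follows essentially the same route as the paper, which proves this theorem in the remark immediately preceding it: local existence and uniqueness without positivity, instantaneous (short-time) regularization to place the solution in $H^{-\frac{n-1}2}(\mathbb{S})$, and then the conservation relation \eqref{conservation_relation} — valid for any mean-one solution — to bound $\mathcal H(f)$ and hence $|J[f]|$ on bounded intervals, which closes the energy estimate and gives global existence. Your explicit Grönwall bound with $2(n-1)\bigl|\tfrac1n-\sigma\bigr|$ and the identification of the test-function admissibility as the delicate point are faithful elaborations of the paper's sketch.
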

\begin{remark}
In this case, we do not have any uniform bound on $H^s(\mathbb{S})$, and we can derive the same existence theorem for the case $\sigma =0$ (see Remark \ref{no_noise}), but only for the case $s\geqslant -\frac{n-1}2$ (which does not include all radon signed-measures).

Another remark is that if we change the sign in front of the alignment term in Doi equation \eqref{Doi_eq} (taking $K(\omega,\bar{\omega})=\omega \cdot \bar{\omega}$, every particle tends to go away from the mean direction), then we can derive a conservation relation in the same way. 
But here the “dissipation term” is~$\widetilde{\mathcal D}(f)=2\sigma \|f-1\|^2_{\widetilde H^{-\frac{n-3}2}} + \frac2{(n-2)!}|J[f]|^2\geqslant 2\sigma (n-1)\mathcal{H}(f)$, without any condition on~$\sigma >0$. 
So in any Sobolev space $H^s(\mathbb{S})$, with $s>-\frac{n-1}2$ we have global exponential decay of the solution to the uniform distribution, with rate~$(n-1)\sigma $.
\end{remark}

\subsection{Study of the supercritical case $\sigma <\frac1n$}
In this section, we fix $\sigma <\frac1n$ and we study the behavior of a solution as $t\rightarrow +\infty $.
We will write $\kappa $ for $\kappa (\sigma)$ and $c$ for $c(\kappa (\sigma))$.
We first establish that the limit set of equilibria~$\mathcal E_\infty $ given by LaSalle principle (Proposition \ref{lasalle}) depends only on the fact that $J[f_0]$ is zero or not. 

\begin{prop}
\label{distinction_J_zero}
If $J[f_0]=0$ then $\mathcal E_\infty $ is reduced to the uniform distribution. Equation \eqref{Doi_eq} becomes the heat equation. 
We have exponential decay to the uniform distribution with rate $2n\sigma $ in any $H^s(\mathbb{S})$.

If $J[f_0]\neq 0$ then $J[f(t)]\neq 0$ for all $t>0$. The limit set $\mathcal E_\infty =\{M_{\kappa \Omega}, \Omega \in \mathbb{S} \}$ consists in all the non-isotropic equilibria. Furthermore, we have for any $s\in \mathbb{R}$,
\begin{equation}
\lim_{t\rightarrow \infty}\|f(t)-M_{\kappa \Omega (t)}\|_{H^s}=0,
\end{equation}
where $\Omega (t)=\frac{J[f(t)]}{|J[f(t)]|}$ is the mean direction of $f(t)$.
\end{prop}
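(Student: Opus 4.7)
The plan is to first read off an autonomous-looking dynamics for the flux~$J[f(t)]$. Testing the weak formulation~\eqref{weak_solution_meanzero} against $h = V\cdot\omega$ for a constant $V \in \mathbb{R}^n$ (equivalently, multiplying~\eqref{Doi_eq} by $\omega$ and integrating), I compute
\begin{equation*}
\frac{\mathrm d}{\mathrm dt} J[f] \;=\; (1 - \sigma(n-1))\, J[f] \;-\; \int_\mathbb{S} (\omega\cdot J[f])\,\omega\, f\, \mathrm d\omega,
\end{equation*}
and hence, whenever $J[f(t)] \neq 0$ with $\Omega(t) = J[f(t)]/|J[f(t)]|$,
\begin{equation*}
\frac{\mathrm d}{\mathrm dt}|J[f]|^2 \;=\; 2|J[f]|^2\Bigl[(1-\sigma(n-1)) \;-\; \int_\mathbb{S}(\omega\cdot \Omega(t))^2 f\,\mathrm d\omega\Bigr].
\end{equation*}
Since the last integral lies in $[0,1]$, this yields both $|J[f(t)]|^2 \leqslant |J[f_0]|^2 e^{2(1-\sigma(n-1))t}$ and $|J[f(t)]|^2 \geqslant |J[f_0]|^2 e^{-2\sigma(n-1)t}$, so $J[f_0] = 0$ forces $J[f(t)] \equiv 0$, while $J[f_0] \neq 0$ forces $J[f(t)] \neq 0$ for all $t \geqslant 0$.

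In the first case the transport part of $Q(f)$ vanishes and~\eqref{Doi_eq} reduces to the linear heat equation $\partial_t f = \sigma \Delta_\omega f$. Moreover $g = f - 1$ has zero degree-$0$ component (mean zero) and, because $J[f]\equiv 0$, zero degree-$1$ component as well, so its spherical-harmonic expansion is supported on degrees $\ell \geqslant 2$; the first admissible eigenvalue of $-\Delta$ is $\lambda_2 = 2n$, and a spectral computation in any $\dot H^s$ gives decay at rate $2n\sigma$. In the second case I invoke LaSalle's principle (Proposition~\ref{lasalle}): $\mathcal E_\infty$ is contained in the set of steady-states and $\mathcal F$ is constant on it with value $\mathcal F_\infty$. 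By Proposition~\ref{minimum_free_energy}, $\mathcal F$ takes only two values on the equilibrium set, namely $0$ at the uniform distribution and a strictly negative one common to every $M_{\kappa\Omega}$, so $\mathcal E_\infty$ is either $\{1\}$ or the entire non-isotropic manifold $\{M_{\kappa\Omega} : \Omega \in \mathbb S\}$. Assume for contradiction that $\mathcal E_\infty = \{1\}$. Then $f(t) \to 1$ in every $H^s$, hence $\int_\mathbb{S}(\omega\cdot\Omega(t))^2 f \to \tfrac1n$ (using that $\int \omega\otimes\omega = \tfrac1n\mathrm{Id}$ and that $(\omega\cdot\Omega)^2$ lies in a fixed $H^{-s}$ uniformly in $\Omega$), so for $t$ large the bracket above is bounded below by $\tfrac12(n-1)(\tfrac1n - \sigma) > 0$, which forces $|J[f(t)]|$ to grow exponentially, contradicting $|J[f(t)]|\to 0$. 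Thus $\mathcal E_\infty$ is the full non-isotropic manifold.

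It remains to replace the abstract direction in the LaSalle convergence by $\Omega(t) = J[f(t)]/|J[f(t)]|$. Since $\mathcal E_\infty = \{M_{\kappa\Omega} : \Omega \in \mathbb S\}$ is compact in every $H^s$ (continuous image of $\mathbb S$ under the smooth map $\Omega \mapsto M_{\kappa\Omega}$), Proposition~\ref{lasalle} gives, for each $t$, a minimizer $\Omega'(t)\in\mathbb S$ with $\|f(t) - M_{\kappa\Omega'(t)}\|_{H^s} \to 0$. Because $\omega$ lies in every $H^{-s}$, dualising yields $|J[f(t)] - c\,\Omega'(t)| \to 0$, hence $|J[f(t)]|\to c > 0$ and $\Omega(t) - \Omega'(t) \to 0$ on $\mathbb S$; smoothness of $\Omega \mapsto M_{\kappa\Omega}$ then gives $\|M_{\kappa\Omega(t)} - M_{\kappa\Omega'(t)}\|_{H^s} \to 0$, and the triangle inequality concludes. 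The main obstacle is the contradiction step excluding the uniform distribution from $\mathcal E_\infty$: LaSalle by itself allows both branches, and one genuinely needs the instability of the uniform state encoded in the ODE for $|J[f]|^2$, combined with the strict separation of free-energy values between the two types of equilibria, to rule it out.
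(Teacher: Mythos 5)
Your proof is correct and follows essentially the same route as the paper: the linear ODE for $J[f]$ gives the dichotomy $J\equiv 0$ versus $J\neq 0$ for all time, the free-energy gap between the uniform state and the $M_{\kappa\Omega}$ reduces $\mathcal E_\infty$ to one of the two sets, the exponential growth of $|J[f]|$ near the uniform state excludes that branch, and continuity of $f\mapsto J[f]$ identifies $\Omega(t)$ with the LaSalle direction. The only (harmless) variations are that you use a two-sided Grönwall bound where the paper invokes uniqueness for the linear ODE $\dot J=M(t)J$, and a direct triangle-inequality estimate where the paper extracts a convergent subsequence.
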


\begin{proof}
First of all, we write the equation for $J[f]$, multiplying equation \eqref{Doi_eq} and integrating on the sphere. We get
\begin{align}
\frac{\mathrm d}{\mathrm dt} J[f] &= -\sigma (n-1)J[f] + \left(\int_\mathbb{S}(\mathrm{Id}- \omega \otimes \omega)\, f \,\mathrm d\omega \right) J[f]\nonumber\\
\label{ODEJ} &=\left((1-(n-1)\sigma)\mathrm{Id} - \int_\mathbb{S}\omega \otimes \omega \, f\right) J[f],
\end{align}
which can be viewed as a first order linear ODE of the form $\frac{\mathrm d}{\mathrm dt} J[f]=M(t)J[f]$. 
The matrix~$M$ is a smooth function of time, so we have a global unique solution. 
Consequently, if $J[f(t_0)]=0$ for $t_0\geqslant 0$, then we have $J[f(t)]=0$, for all $t\geqslant 0$, 
and equation~\eqref{Doi_eq} reduces to the heat equation. 
The distribution $f$ has no component on the first eigenspace of the Laplace-Beltrami operator, 
and the second eigenvalue is~$2n$, so we have exponential decay with rate $2n\sigma $ in any $H^s$ norm. 

Now we suppose that $J[f_0]\neq 0$, so by the previous argument we have $J[f(t)]\neq 0$ for all $t\geqslant 0$. 
There are two possibilities for the limiting set, either the uniform distribution, or the set $\{M_{\kappa \Omega}, \Omega \in \mathbb{S} \}$ (by Proposition \ref{minimum_free_energy}, they do not have the same level of free energy).

In the first case, by LaSalle principle, $f(t)$ converges to the uniform distribution. 
Then the matrix $M(t)=(1-(n-1)\sigma)\mathrm{Id} - \int_\mathbb{S}\omega \otimes \omega \, f$ converges to $(n-1)(\frac1n-\sigma)\mathrm{Id}$. 
Using the ODE for $J[f]$, we get 
\begin{equation*}
\frac12\frac{\mathrm d}{\mathrm dt} |J[f]|^2=J[f]\cdot M(t)J[f]\geqslant ((n-1)(\tfrac1n-\sigma)-\varepsilon)|J[f]|^2,
\end{equation*}
for $t$ sufficiently large. 
Taking $\varepsilon $ sufficiently small, we get that $|J[f]|$ tends to infinity, which is a contradiction.

So we have that $\mathcal E_\infty =\{M_{\kappa \Omega}, \Omega \in \mathbb{S} \}$.
Now suppose that $\|f(t)-M_{\kappa \Omega (t)}\|_{H^s}$ does not tend to $0$.
We take $t_n$ tending to infinity such \mbox{that $\|f(t_n)-M_{\kappa \Omega (t_n)}\|_{H^s}\geqslant \varepsilon >0$.} 
By our LaSalle principle, there exists $\Omega_n\in \mathbb{S}$ such that $\|f(t_n)-M_{\kappa \Omega_n}\|_{H^s}\rightarrow 0$. 
Up to extracting, we can suppose that $\Omega_n\rightarrow \Omega_\infty \in \mathbb{S}$, so $f(t_n)\rightarrow M_{\kappa \Omega_\infty}$ in $H^s(\mathbb{S})$. 
In particular we have that $J[f(t_n)]\rightarrow c(\kappa)\Omega_\infty $, and then $\Omega (t_n)\rightarrow \Omega_\infty $. 
Then $M_{\kappa \Omega (t_n)}$ converges to $M_{\kappa \Omega_\infty}$, giving that~$\|f(t_n)-M_{\kappa \Omega (t_n)}\|_{H^s}\rightarrow 0$, which is a contradiction.
\end{proof}

Now we focus on the case $J[f_0]\neq 0$. We define $\Omega (t)$ as in the previous proposition, and we will expand the solution around $M_{\kappa \Omega (t)}$. 
We first show the convergence in~$L^2(\mathbb{S})$ to a given equilibrium, with exponential rate, assuming conditions on the initial data.

\begin{prop}
\label{asymptotic_rate}
There exists an “asymptotic rate” $r_\infty (\sigma)>0$ satisfying the following property.

Suppose that $\|f(t)-M_{\kappa \Omega (t)}\|_{H^s}$ is uniformly bounded on $[t_0,+\infty)$ by a constant~$K$, with $s>\frac{3(n-1)}2$.
Then for all $r<r_\infty (\sigma)$, there exists $\Omega_\infty \in \mathbb{S}$ and $\delta,C>0$, such that if~$\|f(t_0)-M_{\kappa \Omega (t_0)}\|_{L^2}\leqslant \delta $, we have 
\begin{equation*}
\|f(t)-M_{\kappa \Omega_\infty}\|_{L^2} \leqslant C \|f(t_0)-M_{\kappa \Omega (t_0)}\|_{L^2} e^{-r(t-t_0)}.
\end{equation*}
The constants $\delta $ and $C$ depend only on $\sigma $, $s$, $K$, and $r$.
Moreover, as $\sigma \rightarrow \frac1n$, we have that~$r_\infty (\sigma)\geqslant 2(n-1)(\frac1n-\sigma)+O((\frac1n-\sigma)^{\frac32})$.
\end{prop}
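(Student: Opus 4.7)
The plan is to linearize around the time-dependent equilibrium $M_{\kappa\Omega(t)}$ and show that the geometric choice $\Omega(t)=J[f(t)]/|J[f(t)]|$ automatically projects out the kernel of the linearized operator. Set $g(t)=f(t)-M_{\kappa\Omega(t)}$. By definition of $\Omega(t)$ and formula \eqref{flux_M}, $J[g(t)]=(|J[f(t)]|-c)\,\Omega(t)$ is parallel to $\Omega(t)$; this is the crucial orthogonality condition. Substituting $f=M_{\kappa\Omega(t)}+g$ into \eqref{Doi_eq} and expanding in $g$ yields
\begin{equation*}
\partial_t g \;=\; \mathcal{L}_{\Omega(t)}\,g \;-\; \dot{\Omega}(t)\cdot(\partial_\Omega M_{\kappa\Omega})|_{\Omega(t)} \;+\; \mathcal{N}(g),
\end{equation*}
where $\mathcal{L}_{\Omega_0}g=\nabla_\omega\!\cdot\!(M_{\kappa\Omega_0}\nabla_\omega\nu)$ with $\nu=\sigma g/M_{\kappa\Omega_0}-\omega\cdot J[g]$, and $\mathcal{N}(g)$ is at least quadratic in $g$. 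From the ODE \eqref{ODEJ}, the tangential component of $\dot{J}[f]/|J[f]|$ yields the estimate $|\dot{\Omega}(t)|\leq C\|g(t)\|_{L^2}$.

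The linearized operator $\mathcal{L}_{\Omega_0}$ is the gradient-flow generator for the quadratic free energy $\mathcal{F}_2(g)=\tfrac{\sigma}{2}\int g^2/M_{\kappa\Omega_0}-\tfrac12|J[g]|^2$, hence self-adjoint and non-positive with respect to the weighted inner product $\langle g_1,g_2\rangle_*=\int g_1g_2/M_{\kappa\Omega_0}$, with dissipation identity $-\langle\mathcal{L}_{\Omega_0}g,g\rangle_*=\int M_{\kappa\Omega_0}|\nabla\nu|^2$. Its kernel on the mean-zero subspace is the $(n-1)$-dimensional tangent space to the manifold $\{M_{\kappa\Omega'}\}$, namely $\{\kappa(\omega\cdot V)M_{\kappa\Omega_0}:V\perp\Omega_0\}$; for such elements $J$ is proportional to $V\perp\Omega_0$ (using the Bessel-type identity $c_2=1-(n-1)\sigma$ for the second moment of $M_{\kappa\Omega_0}$), so the constraint $J[g]\parallel\Omega(t)$ puts $g$ precisely in the orthogonal complement of the kernel. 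I define $r_\infty(\sigma)$ as half the smallest eigenvalue of $-\mathcal{L}_{\Omega_0}$ on this complement. Rotational symmetry around $\Omega_0$ decomposes the problem into angular-momentum sectors: modes of degree $\ell\geq 2$ yield eigenvalues bounded below uniformly, while the slow mode lies in the degree-one sector along $\Omega_0$, whose eigenvalue is computed via perturbation in $\kappa$; using $\widetilde{\sigma}(\kappa)=\tfrac1n-\tfrac{\kappa^2}{n^2(n+2)}+O(\kappa^4)$ from Section~3 yields $r_\infty(\sigma)\geq 2(n-1)(\tfrac1n-\sigma)+O((\tfrac1n-\sigma)^{3/2})$.

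Testing the PDE for $g$ against $\nu$ in $L^2$, and using that $\partial_\Omega M_{\kappa\Omega(t)}$ lies in the kernel of $\mathcal{L}_{\Omega(t)}$ so that the $\dot{\Omega}$-correction is orthogonal to $\nu$ on the constrained subspace, I obtain, for any $r<r_\infty(\sigma)$,
\begin{equation*}
\tfrac{d}{dt}\mathcal{F}_2(g)\;\leq\;-2r\,\mathcal{F}_2(g)+C\,\|g\|_{L^2}^2\,\|g\|_{W^{1,\infty}}.
\end{equation*}
The uniform bound $\|g\|_{H^s}\leq K$ with $s>\tfrac{3(n-1)}{2}$ combined with Sobolev interpolation gives $\|g\|_{W^{1,\infty}}\leq C_K\|g\|_{L^2}^\theta$ for some $\theta>0$, turning the remainder into $C\|g\|_{L^2}^{2+\theta}$. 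Provided $\delta$ is chosen small enough, a standard bootstrap yields $\|g(t)\|_{L^2}^2\leq C\|g(t_0)\|_{L^2}^2\,e^{-2r(t-t_0)}$ for all $t\geq t_0$. The estimate $|\dot{\Omega}(t)|\leq C\|g(t)\|_{L^2}$ is then integrable, so $\Omega(t)\to\Omega_\infty$ exists with $|\Omega(t)-\Omega_\infty|\leq C\|g(t_0)\|_{L^2}\,e^{-r(t-t_0)}$, and the triangle inequality $\|f(t)-M_{\kappa\Omega_\infty}\|_{L^2}\leq\|g(t)\|_{L^2}+\|M_{\kappa\Omega(t)}-M_{\kappa\Omega_\infty}\|_{L^2}$ concludes.

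The main obstacle is the spectral-gap analysis with the sharp asymptotic rate. Since $\mathcal{L}_{\Omega_0}$ is not block-diagonal in the standard spherical-harmonics basis when $\kappa>0$ (the multiplicative weight $M_{\kappa\Omega_0}$ couples different degrees), one cannot directly read off eigenvalues; rotational symmetry around $\Omega_0$ only reduces the operator to a one-dimensional radial problem in each angular-momentum sector. Pinning down the leading order $r_\infty(\sigma)\sim 2(n-1)(\tfrac1n-\sigma)$ requires a careful perturbative expansion around the degenerate point $\kappa=0$, matching back to the linearization performed in the Introduction around the uniform distribution (the factor of $2$ reflecting that $\mathcal{F}_2$ is a quadratic form). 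A secondary technical point is verifying that the $\dot{\Omega}$-correction is truly orthogonal to $\nu$ in $\langle\cdot,\cdot\rangle_*$ once the constraint $J[g]\parallel\Omega(t)$ is imposed, which is what makes the geometric decomposition close.
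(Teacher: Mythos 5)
Your overall architecture (Gr\"onwall inequality, smallness bootstrap, integrability of $\dot{\Omega}$, triangle inequality at the end) matches the paper's, and your observation that the weighted orthogonality $\langle g,\kappa(\omega\cdot V)M_{\kappa\Omega}\rangle_{*}=\kappa V\cdot J[g]$ turns the geometric constraint $J[g]\parallel\Omega(t)$ into orthogonality to the tangent space of the equilibrium manifold is correct (and the ``secondary technical point'' you worry about does in fact close: testing $\dot{\Omega}\cdot\partial_\Omega M_{\kappa\Omega}$ against $\nu$ produces $\sigma\kappa\,\dot{\Omega}\cdot J[g]-\kappa\,\dot{\Omega}\cdot\langle\omega\otimes\omega\rangle_{M_{\kappa\Omega}}J[g]$, and both terms vanish because $\dot{\Omega}\perp\Omega$ while $J[g]$ and $\langle\omega\otimes\omega\rangle_{M_{\kappa\Omega}}J[g]$ are parallel to $\Omega$). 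But the proof has a genuine gap at its center: you \emph{define} $r_\infty(\sigma)$ as half the smallest eigenvalue of $-\mathcal{L}_{\Omega_0}$ on the orthogonal complement of the kernel, yet you never prove that this quantity is positive, nor that the kernel is exactly the $(n-1)$-dimensional tangent space rather than something larger, nor the claimed asymptotics $r_\infty(\sigma)\geqslant 2(n-1)(\tfrac1n-\sigma)+O((\tfrac1n-\sigma)^{3/2})$. You explicitly concede all three points in your last paragraph (``one cannot directly read off eigenvalues\dots requires a careful perturbative expansion''). These are not routine verifications: positivity of the gap is equivalent to strict positive-definiteness of the Hessian of $\mathcal{F}$ on the constrained subspace, which is exactly the content of the paper's Lemma~\ref{beta_positive} ($\beta=c^2+n\sigma-1>0$, proven by a nontrivial power-series computation), and without it even the coercivity of your $\mathcal{F}_2(g)$ — which you need to convert decay of $\mathcal{F}_2$ into decay of $\|g\|_{L^2}$ — is unavailable.

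It is worth noting how the paper avoids the spectral analysis you got stuck on. Rather than linearizing the operator, it works with the exact entropy--entropy production identity $\frac{\mathrm d}{\mathrm dt}\mathcal{F}+\mathcal{D}=0$ (which also eliminates the $\dot{\Omega}$-correction entirely, since $\mathcal{F}(M_{\kappa\Omega(t)})$ is a constant independent of $\Omega$), bounds $\mathcal{D}$ from below by a crude weighted Poincar\'e inequality with constant $\Lambda_\kappa\geqslant(n-1)e^{-2\kappa}$ (comparing $M_{\kappa\Omega}$ to the uniform measure via $\min M_{\kappa\Omega}/\max M_{\kappa\Omega}$), and then uses the explicit decomposition $h=\alpha(\cos\theta-c)+g$ with $\alpha=\tfrac1{\sigma-\beta}\langle h\cos\theta\rangle_{M_{\kappa\Omega}}$, which simultaneously block-diagonalizes the Hessian of $\mathcal{F}$ and the lower bound on $\mathcal{D}$. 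This yields $\mathcal{D}\geqslant 2\Lambda_\kappa\beta\,(\mathcal{F}-\mathcal{F}(M_{\kappa\Omega}))$ with the fully explicit rate $r_\infty=\Lambda_\kappa\beta$, whose asymptotics follow from $\beta\sim 2(\tfrac1n-\sigma)$ and $\kappa\sim\sqrt{n^2(n+2)(\tfrac1n-\sigma)}$. To repair your argument you would either have to import Lemma~\ref{beta_positive} and an analogue of the Poincar\'e estimate to control the spectrum of $\mathcal{L}_{\Omega_0}$, or switch to the entropy-dissipation route; as written, the key quantitative claim is assumed rather than established.
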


\begin{proof}
We first introduce some notations. 
When there is no confusion, we just write~$\Omega $ for $\Omega (t)$, and we will always assume $t\geqslant t_0$. 
We write $\cos\theta =\omega \cdot \Omega $.
We denote by $\langle \cdot \rangle_{M_{\kappa \Omega}}$ the mean of a function against the probability measure $M_{\kappa \Omega}$.

We have the following identities (we recall, by Lemma \ref{beta_positive}, that $\beta =c^2+n\sigma -1$ is positive): 
\begin{gather}
\langle \omega \rangle_{M_{\kappa \Omega}}=\langle \cos\theta \rangle_{M_{\kappa \Omega}}\Omega =c\,\Omega, \nonumber\\
\label{cos2}\langle \cos^2\theta \rangle_{M_{\kappa \Omega}}=1-(n-1)\sigma, \nonumber\\
\label{cos2bis}\langle (\cos\theta -c)^2\rangle_{M_{\kappa \Omega}}=1-(n-1)\sigma -c^2=\sigma -\beta >0.
\end{gather}
We can write $f=(1+ h)M_{\kappa \Omega}$, then we have $\langle h\rangle_{M_{\kappa \Omega}}=0$. 
Since $\Omega $ is the direction of~$J[f]=\langle (1+h)\omega \rangle_{M_{\kappa \Omega}}$, we get that $\langle h\omega \rangle_{M_{\kappa \Omega}}=\langle h\cos\theta \rangle_{M_{\kappa \Omega}}\Omega $.

So we can do an expansion of the free energy and its dissipation in terms of~$h$.
Since we know that $M_{\kappa \Omega (t)}$ is a critical point of $\mathcal F$, we already know that the expansion of~$\mathcal F((1+h)M_{\kappa \Omega})-\mathcal F(M_{\kappa \Omega})$ will contain no term of order $0$ and $1$ in $h$. 
We get, using~\eqref{def_F},
\begin{equation*}
\mathcal F((1+h)M_{\kappa \Omega})-\mathcal F(M_{\kappa \Omega}) = \sigma \tfrac12\langle h^2\rangle_{M_{\kappa \Omega}}-\tfrac12|\langle h\omega \rangle_{M_{\kappa \Omega}}|^2 + O(\|h\|_{\infty}^3).
\end{equation*}
Using Sobolev embedding and interpolation, we have (writing $C$ for a generic constant, depending only on $\sigma $, $s$, and $K$)
\begin{equation*}
\|f-M_{\kappa \Omega}\|_\infty \leqslant C\|f-M_{\kappa \Omega}\|_{H^{\frac{n-1}2}}\leqslant C\|f-M_{\kappa \Omega}\|^{1-\frac{n-1}{2s}}_{L^2}K^{\frac{n-1}{2s}}.
\end{equation*}
So since $1-\frac{n-1}{2s}>\frac23$ and $f-M_{\kappa \Omega}=hM_{\kappa \Omega}$, with $M_{\kappa \Omega}$ uniformly bounded below and above, we get that $\|h\|_{\infty}^3=o(\langle h^2\rangle_{M_{\kappa \Omega}})$ (and more precisely, for any $\varepsilon >0$ there exists $\eta >0$ depending only on $\varepsilon $, $\sigma $, $s$, and $K$ such that $\|h\|_{\infty}^3\leqslant \varepsilon \langle h^2\rangle_{M_{\kappa \Omega}}$ as soon as~$\langle h^2\rangle_{M_{\kappa \Omega}}\leqslant \eta $).
We get
\begin{equation}
\mathcal F(f)-\mathcal F(M_{\kappa \Omega}) = \tfrac12[\sigma \langle h^2\rangle_{M_{\kappa \Omega}}-\langle h\cos\theta \rangle^2_{M_{\kappa \Omega}}] + o(\langle h^2\rangle_{M_{\kappa \Omega}}).
\label{Festimate}
\end{equation}
We use the definition \eqref{def_D} of $\mathcal D(f)$:
\begin{align*}
D(f)&=\langle (1+h)|\nabla (\sigma \ln(M_{\kappa \Omega}(1+h))-\langle (1+h)\omega \rangle_{M_{\kappa \Omega}}\cdot \omega)|^2\rangle_{M_{\kappa \Omega}}\\
&=\langle (1+h)|\nabla (\sigma \ln(1+h)-\langle h\cos\theta \rangle_{M_{\kappa \Omega}}\cos\theta)|^2\rangle_{M_{\kappa \Omega}}\\
&\geqslant (1-\|h\|_{\infty})\langle |\nabla (\sigma \ln(1+h)-\langle h\cos\theta \rangle_{M_{\kappa \Omega}}\cos\theta)|^2\rangle_{M_{\kappa \Omega}}.
\end{align*}
Now we can derive a Poincaré inequality of the form
\begin{equation*}
\langle |\nabla g|^2\rangle_{M_{\kappa \Omega}}\geqslant \Lambda_\kappa \langle (g-\langle g\rangle_{M_{\kappa \Omega}})^2\rangle_{M_{\kappa \Omega}}.
\end{equation*}
Indeed, we use the fact that $M_{\kappa \Omega}$ is positive and bounded:
\begin{align}
\langle |\nabla g|^2\rangle_{M_{\kappa \Omega}}&\geqslant \min M_{\kappa \Omega}\int_\mathbb{S} |\nabla g|^2\nonumber\\
&\geqslant \min M_{\kappa \Omega}(n-1)\int_\mathbb{S} (g-\textstyle\int_\mathbb{S} g)^2\nonumber\\\\
&\geqslant \tfrac{\min M_{\kappa \Omega}}{\max M_{\kappa \Omega}}(n-1)\langle (g-\textstyle\int_\mathbb{S} g)^2\rangle_{M_{\kappa \Omega}}\nonumber\\\\
&\geqslant (n-1)e^{-2\kappa}\langle (g-\langle g\rangle_{M_{\kappa \Omega}})^2\rangle_{M_{\kappa \Omega}}.
\label{minmax_poincare}
\end{align}
Actually this is a rough estimate, we have here $\Lambda_\kappa \geqslant (n-1)e^{-2\kappa}$, a more precise study of $\Lambda_\kappa $ could be done using separation of variable. 
The problem then reduces to finding the smallest eigenvalue of a one-dimensional Sturm-Liouville problem, but even in that case, we did not manage to find a better estimate for now.

So we finally get
\begin{align*}
\mathcal D(f)&\geqslant (1-\|h\|_{\infty})\Lambda_\kappa \langle [\sigma \ln(1+h)-\sigma \langle \ln(1+h)\rangle_{M_{\kappa \Omega}}\hspace{-4pt} - \langle h\cos\theta \rangle_{M_{\kappa \Omega}}(\cos\theta -c)]^2\rangle_{M_{\kappa \Omega}}\\
&\geqslant (1-\|h\|_{\infty})\Lambda_\kappa \langle [\sigma h - \langle h\cos\theta \rangle_{M_{\kappa \Omega}}(\cos\theta -c)+O(\|h\|_{\infty}^2)]^2\rangle_{M_{\kappa \Omega}}\\
&\geqslant (1-\|h\|_{\infty})\Lambda_\kappa (\sigma^2\langle h^2\rangle_{M_{\kappa \Omega}}-(\beta +\sigma) \langle h\cos\theta \rangle_{M_{\kappa \Omega}}^2)+O(\|h\|_{\infty}^3).
\end{align*}
With the same argument as before, we get that
\begin{equation}
\mathcal D(f)\geqslant \Lambda_\kappa (\sigma^2\langle h^2\rangle_{M_{\kappa \Omega}}-(\beta +\sigma) \langle h\cos\theta \rangle_{M_{\kappa \Omega}}^2) + o(\langle h^2\rangle_{M_{\kappa \Omega}}).
\label{Dboundedbelow}
\end{equation}
The goal is now to express the bounds in \eqref{Dboundedbelow} and \eqref{Festimate} as the sum of positive terms.
Indeed, we expect to have a Grönwall’s inequality which will give a rate of convergence.

We set $\alpha =\frac1{\sigma -\beta}\langle h\cos\theta \rangle_{M_{\kappa \Omega}}$, and we write $h=\alpha (\cos\theta -c) + g$. Using \eqref{cos2bis} we have that $\alpha $ is well defined since $\sigma -\beta >0$ and we get~$\langle g\rangle_{M_{\kappa \Omega}}=\langle g\omega \rangle_{M_{\kappa \Omega}}=0$.

Plugging $\langle h^2\rangle_{M_{\kappa \Omega}}=(\sigma -\beta)\alpha^2+\langle g^2\rangle_{M_{\kappa \Omega}}$ into \eqref{Festimate} and \eqref{Dboundedbelow} gives
\begin{gather}
\label{Falphag}\mathcal F(f)-\mathcal F(M_{\kappa \Omega}) = \tfrac12 [\beta (\sigma -\beta)\alpha^2 + \sigma \langle g^2\rangle_{M_{\kappa \Omega}}]+o(\langle h^2\rangle_{M_{\kappa \Omega}}),\\
\begin{split}\mathcal D(f)&\geqslant \Lambda_\kappa (\beta^2(\sigma -\beta) \alpha^2+\sigma^2\langle g^2\rangle_{M_{\kappa \Omega}})+o(\langle h^2\rangle_{M_{\kappa \Omega}})\\
&\geqslant \Lambda_\kappa \beta (\beta (\sigma -\beta) \alpha^2+\sigma \langle g^2\rangle_{M_{\kappa \Omega}})+o(\langle h^2\rangle_{M_{\kappa \Omega}}).
\end{split}\nonumber
\end{gather}
So for all $r<\Lambda_\kappa \beta $, if $\langle h^2\rangle_{M_{\kappa \Omega}}$ is sufficiently small, we have $\mathcal D(f)\geqslant r (\mathcal F(f)-\mathcal F(M_{\kappa \Omega}))$. 
Using the conservation relation \eqref{conservation_free_energy}, there exists $\delta_0>0$ (depending only on $\sigma $, $s$, $K$ and $r$) such that if $\|f(t)-M_{\kappa \Omega (t)}\|_{L^2}\leqslant \delta_0$, we have
\begin{equation*}
\frac{\mathrm d}{\mathrm dt}[\mathcal F(f)-\mathcal F(M_{\kappa \Omega})]=-\mathcal D(f)\leqslant -2r[\mathcal F(f)-\mathcal F(M_{\kappa \Omega})].
\end{equation*}
Then we obtain, for all $T$, such that $\|f-M_{\kappa \Omega}\|_{L^2}\leqslant \delta_0$ on $[t_0,T]$,
\begin{equation*}
\mathcal F(f(T))-\mathcal F(M_{\kappa \Omega (T)})\leqslant [\mathcal F(f(t_0))-\mathcal F(M_{\kappa \Omega (t_0)})] e^{-2r(T-t_0)},
\end{equation*}
and then, using the estimate \eqref{Falphag}, we get that for $t\in [t_0,T]$,
\begin{equation}
\|f-M_{\kappa \Omega}\|_{L^2}\leqslant C_0\|f(t_0)-M_{\kappa \Omega (t_0)}\|_{L^2} e^{-r(t-t_0)}.
\label{finalgronwall}
\end{equation}
So if we take $\delta <\frac{\delta_0}{C_0}\leqslant \delta_0$, and we start with $\|f(t_0)-M_{\kappa \Omega (t_0)}\|_{L^2}\leqslant \delta $, we get that~$\|f-M_{\kappa \Omega}\|_{L^2}\leqslant \delta_0$ on $[t_0,T]$ for all $T\geqslant t_0$. 
Otherwise, the largest of such a $T$ would satisfy $\delta_0=\|f(T)-M_{\kappa \Omega (T)}\|_{L^2}\leqslant C\delta e^{-r(T-t_0)}<\delta_0$.
So the inequality \eqref{finalgronwall} holds for all $t\in [t_0,+\infty)$.

It remains to prove that $\Omega (t)$ converges to some $\Omega_\infty $, if we want to have strong convergence to a given steady state.
This is possible using the ODE satisfied by $\Omega $.

Indeed, we have $J[f]=c\Omega +\langle h\omega \rangle_{M_{\kappa \Omega}}=(c+\alpha (\sigma -\beta))\Omega $, and then 
\begin{equation*}
\frac{\mathrm d}{\mathrm dt}J[f]=(c+\alpha (\sigma -\beta))\frac{\mathrm d}{\mathrm dt}\Omega +(\sigma -\beta)\Omega \frac{\mathrm d}{\mathrm dt}\alpha.
\end{equation*}
So applying $\mathrm{Id}-\Omega \otimes \Omega $ to the ODE \eqref{ODEJ} gives an ODE for $\Omega $, in terms of $\alpha $ and $g$.
We get 
\begin{align*}
 (\mathrm{Id}-\Omega \otimes \Omega)\frac{\mathrm d}{\mathrm dt} J[f] &= - (\mathrm{Id}-\Omega \otimes \Omega)\left(\int_\mathbb{S}\omega \otimes \omega \, f \,\mathrm d\omega \right) J[f]\\
&=-(c+\alpha (\sigma -\beta))(\mathrm{Id}-\Omega \otimes \Omega)[\langle h\cos\theta \,\omega \rangle_{M_{\kappa \Omega}}+\langle \cos\theta \,\omega \rangle_{M_{\kappa \Omega}}].
\end{align*}
Since $\langle (\cos\theta -c)\cos\theta \,\omega \rangle_{M_{\kappa \Omega}}$ and $\langle \cos\theta \,\omega \rangle_{M_{\kappa \Omega}}$ are parallel to $\Omega $, we get that
\begin{equation*}
(c+\alpha (\sigma -\beta))\frac{\mathrm d\Omega}{\mathrm dt}=-(c+\alpha (\sigma -\beta))(\mathrm{Id}-\Omega \otimes \Omega)\langle g\cos\theta \,\omega \rangle_{M_{\kappa \Omega}}.
\end{equation*}
Since $(c+\alpha (\sigma -\beta))$ is the norm of $J[f]$, it is never zero, and we get (the notation~$C$ standing for a generic constant depending only on $r$, $s$, $\sigma $ and $K$)
\begin{equation*}
\left|\frac{\mathrm d\Omega}{\mathrm dt}\right|\leqslant C\sqrt{\langle g^2\rangle_{M_{\kappa \Omega}}}\leqslant C\|f-M_{\kappa \Omega}\|_{L^2}.
\end{equation*}
So we have exponential decay of $\frac{\mathrm d\Omega}{\mathrm dt}$ with rate $r$, in particular $\Omega $ is converging to some $\Omega_\infty \in \mathbb{S}$. 
More precisely,
\begin{equation*}
|\Omega (t)-\Omega_\infty |\leqslant \int_t^\infty |\tfrac{\mathrm d\Omega}{\mathrm dt}|\mathrm dt\leqslant C\|f(t_0)-M_{\kappa \Omega (t_0)}\|_{L^2} e^{-r(t-t_0)}.
\end{equation*}
Now we have that $\|M_{\kappa \Omega (t)}-M_{\kappa \Omega_\infty}\|_{L^2}\leqslant C|\Omega (t)-\Omega_\infty |$ (the function $\Omega \mapsto e^{\kappa \omega \cdot \Omega}$ from~$\mathbb{S}$ to $\mathbb{R}$ is globally Lipschitz with a constant independent of $\omega \in \mathbb{S}$).
So we get the final estimation: 
\begin{equation*}
\|f-M_{\kappa \Omega_\infty}\|_{L^2}\leqslant \|f-M_{\kappa \Omega}\|_{L^2}+\|M_{\kappa \Omega (t)}-M_{\kappa \Omega_\infty}\|_{L^2}\leqslant C\|f(t_0)-M_{\kappa \Omega (t_0)}\|_{L^2} e^{-r(t-t_0)}.
\end{equation*}
So the proposition is true with $r_\infty (\sigma)=\Lambda_\kappa \beta >0$. 
By the estimate \eqref{minmax_poincare}, we know that $\Lambda_\kappa \geqslant (n-1)e^{-2\kappa}$. 
And by the expansions of $c$ and $\kappa $ as $\sigma \rightarrow \frac1n$ given in \eqref{expansion_coefficients}, we get that $r_\infty (\sigma)\geqslant 2(n-1)(\frac1n-\sigma)+O((\frac1n-\sigma)^{\frac32})$.
\end{proof}

By Proposition \ref{distinction_J_zero}, we have that $f(t)-M_{\kappa \Omega (t)}$ tends to zero in any $H^s(\mathbb{S})$.
So the hypotheses of Proposition \ref{asymptotic_rate}, for any $r<r_\infty (\sigma)$, are satisfied for some~$t_0>0$.

Once more, by interpolation and uniform boundedness on $[t_0,+\infty)$ of the $H^p$ norm, we have
\begin{align*}
\|f-M_{\kappa \Omega_\infty}\|_{H^s}&\leqslant C\|f-M_{\kappa \Omega_\infty}\|^{1-\frac{s}{p}}_{L^2}\|f-M_{\kappa \Omega_\infty}\|_{H^p}^{\frac{s}{p}}\\
&\leqslant \widetilde{C}\|f(t_0-M_{\kappa \Omega (t_0)}\|^{1-\frac{s}{p}}_{L^2}e^{-r(1-\frac{s}{p})(t-t_0)},
\end{align*}
so taking $p$ sufficiently large, we also get exponential convergence for the $H^s$ norm, with rate $r(1-\delta)$ for any $\delta >0$.

Finally we have that for all $r<r_\infty (\sigma)$ and $s$, there exists some time $t_0$ and $C>0$ such that~$\|f-M_{\kappa \Omega_\infty}\|_{H^s}\leqslant Ce^{-rt}$ for $t\geqslant t_0$.
We can even get rid of the constant $C$ since for any $\widetilde r<r$ and $t$ sufficiently large $Ce^{-rt}\leqslant e^{-\widetilde rt}$.

\subsection{Study of the critical case $\sigma =\frac1n$}
For any $\sigma \in (0,+\infty)\setminus\{\frac1n\}$, we have exponential convergence to some equilibrium. 
However the rate of convergence tends to $0$ when $\sigma $ is close to $\frac1n$ (in the case where~$J[f_0]\neq 0$).
So we do not expect to have a similar rate of convergence in the critical case.

First of all, we know by Proposition \ref{minimum_free_energy} that the solution converges (in any~$H^s(\mathbb{S})$) to the uniform distribution as time goes to infinity. 
The goal of this section is to estimate the speed of convergence to this equilibrium.

\begin{prop}
\label{algebraic_decay}
Suppose that $\|f(t)-1\|_{H^s}$ is uniformly bounded on $[t_0,+\infty)$ by a constant $K$, with $s>\frac{7(n-1)}2$.

Then for all $C>1$, there exists $\delta >0$, such that if $\|f(t_0)-1\|_{L^2}\leqslant \delta $, we have, for~$t\geqslant t_0$,
\begin{equation*}
\|f(t)-1\|_{L^2} \leqslant \frac{C}{\sqrt{\frac1{\sqrt{2(n+2)}\|f(t_0)-1\|_{L^2}}+\frac{2(n-1)}{n(n+2)}(t-t_0)}}.
\end{equation*}
The constant $\delta $ depends only on $\sigma $, $s$, $K$, and $C$.
\end{prop}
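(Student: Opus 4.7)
The strategy follows the spirit of Proposition~\ref{asymptotic_rate} but with a key twist: at the critical value $\sigma = \tfrac1n$ the quadratic part of the dissipation $\mathcal D$ loses its control on the $|J[g]|^2$ direction, so $\mathcal F$ satisfies only a \emph{nonlinear} differential inequality of the form $\tfrac{\mathrm d \mathcal F}{\mathrm d t} \leq -c\,\mathcal F^{3/2}$ instead of $\tfrac{\mathrm d \mathcal F}{\mathrm d t} \leq -c\,\mathcal F$, and integrating this yields algebraic decay of $\mathcal F^{-1/2}$ linearly in $t$.

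Set $g = f - 1$ and decompose $g = g_1 + g_\perp$ with $g_1 = n\,\omega \cdot J[g]$ (the degree-$1$ spherical harmonic component, normalised via $\int_{\mathbb S}\omega \otimes \omega = \tfrac1n\mathrm{Id}$) and $g_\perp$ the projection onto modes of degree $\geq 2$. Expanding $\mathcal F$ around the uniform distribution at $\sigma = \tfrac1n$, the $|J[g]|^2$ contributions cancel in the quadratic part, and one finds
\[
\mathcal F(f) = \tfrac{1}{2n}\|g_\perp\|_{L^2}^2 - \tfrac{1}{6n}\!\int_{\mathbb S}\! g^3 + \tfrac{1}{12n}\!\int_{\mathbb S}\! g^4 + O(\|g\|_\infty\|g\|_{L^2}^4).
\]
Using the fourth moment $\int_{\mathbb S}\omega_i\omega_j\omega_k\omega_l\,\mathrm d\omega = \tfrac{\delta_{ij}\delta_{kl}+\delta_{ik}\delta_{jl}+\delta_{il}\delta_{jk}}{n(n+2)}$ and completing the square in $g_\perp$ for fixed $J := J[g]$, this rearranges as
\[
\mathcal F(f) = \tfrac{1}{2n}\bigl\|g_\perp - \tfrac12 F_2(J)\bigr\|_{L^2}^2 + \tfrac{n|J|^4}{4(n+2)} + O(\|g\|_\infty\|g\|_{L^2}^4),
\]
where $F_2(J)(\omega) = n^2\bigl[(\omega \cdot J)^2 - |J|^2/n\bigr]$ is the \emph{slow-manifold} value of $g_\perp$. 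Two inequalities drop out: the quadratic upper bound $\mathcal F(f(t_0)) \leq (1+\varepsilon)\,\tfrac{1}{2n}\|g(t_0)\|_{L^2}^2$ (from $\|g_\perp\|_{L^2} \leq \|g\|_{L^2}$), and the slow-manifold lower bound $\|g(t)\|_{L^2}^2 \leq (1+\varepsilon)\,2\sqrt{n(n+2)\,\mathcal F(f(t))}$, the latter obtained by combining $n|J|^2 \leq 2\sqrt{n(n+2)\mathcal F}(1+o(1))$ with $\|g_\perp\|_{L^2}^2 \leq 4n^2\mathcal F(1+o(1))$, the second term being subleading as $\mathcal F \to 0$. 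The higher-order Taylor errors are absorbed into $\varepsilon$ by interpolating $\|g\|_\infty$ with the $H^s$ bound: the exponent $s > \tfrac{7(n-1)}{2}$ is exactly what is needed so that the relevant interpolation powers of $\|g\|_{L^2}$ dominate all error terms up to the required quartic order.

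The heart of the proof is the nonlinear dissipation inequality
\[
\mathcal D(f) \geq (1-\varepsilon)\,\tfrac{8(n-1)}{\sqrt{n(n+2)}}\,\mathcal F(f)^{3/2}.
\]
On the slow manifold a direct computation (again using the fourth moment) gives $\mathcal D(f) \sim \tfrac{n(n-1)|J|^6}{(n+2)^2}$ and $\mathcal F(f) \sim \tfrac{n|J|^4}{4(n+2)}$, producing the claimed constant exactly. Off the slow manifold, the linear dissipation $\tfrac{1}{n^2}\|\nabla g_\perp\|_{L^2}^2 \geq \tfrac{2}{n}\|g_\perp\|_{L^2}^2$ (Poincaré on modes of degree $\geq 2$) yields an even stronger, essentially exponential decay; the two regimes are glued using $\|g_\perp - \tfrac12 F_2(J)\|_{L^2}^2 \leq 2n\,\mathcal F$, which comes straight from the expansion. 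Gronwall applied to $\mathcal F^{-1/2}$ then gives
\[
\mathcal F(t)^{-1/2} \geq \mathcal F(t_0)^{-1/2} + \tfrac{4(n-1)(1-\varepsilon)^{1/2}}{\sqrt{n(n+2)}}(t-t_0),
\]
which combined with the two inequalities above produces exactly the stated estimate with constant $C = C(\varepsilon) \to 1$ as $\varepsilon \to 0$; the smallness $\delta$ in the hypothesis is what uniformly controls all $\varepsilon$-errors on $[t_0,+\infty)$.

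The main difficulty lies in establishing the dissipation inequality uniformly, not just on the slow manifold: the crucial cubic structure -- the adiabatic coupling between $J[g]$ and the degree-$2$ mode of $g_\perp$ -- must be cleanly isolated from all other cubic and quartic corrections in the expansion of $\mathcal D$. As a consistency check on the asymptotic rate, the ODE $\tfrac{\mathrm d}{\mathrm dt}J[g] = -\bigl(\int_{\mathbb S}\omega \otimes \omega\,g_\perp\bigr)J[g]$ (obtained by testing \eqref{Doi_eq} against $\omega$), combined with $g_\perp \approx \tfrac12 F_2(J)$, produces $\tfrac{\mathrm d}{\mathrm dt}|J|^2 \simeq -\tfrac{2(n-1)}{n+2}|J|^4$, and together with $\|g\|_{L^2}^2 \simeq n|J|^2$ this is precisely the source of the rate $\tfrac{2(n-1)}{n(n+2)}$ appearing in the statement.
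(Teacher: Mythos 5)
Your strategy is the paper's: expand $\mathcal F$ and $\mathcal D$ around the uniform state in the adiabatic variables $(\alpha,g)$ with $\alpha=n|J[f]|$, derive $\frac{\mathrm d}{\mathrm dt}\mathcal F\leqslant -c\,\mathcal F^{3/2}$ with $c=\frac{8(n-1)}{\sqrt{n(n+2)}}(1+O(\varepsilon))$, integrate, and convert back to $\|h\|_{L^2}$ via the two-sided bound $2n\mathcal F\lesssim \langle h^2\rangle\lesssim 2\sqrt{n(n+2)\mathcal F}$; all your constants, including the slow-manifold values $\mathcal F\sim\frac{n|J|^4}{4(n+2)}$ and $\mathcal D\sim\frac{n(n-1)|J|^6}{(n+2)^2}$, agree with the paper's.

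The genuine gap is exactly the step you flag as ``the main difficulty'' and then do not carry out: the uniform lower bound on $\mathcal D$. Your completion of the square only removes the slow-manifold correction at order $\alpha^2$ (the term $\frac12 F_2(J)=\frac12\alpha^2(\cos^2\theta-\frac1n)$), and this is not enough to see why the pure-$\alpha$ contribution to $\mathcal D$ starts at order $\alpha^6$ with a \emph{positive} coefficient rather than at order $\alpha^4$ with an uncontrolled sign. The paper closes this by pushing the change of variables one order further, defining $g$ in \eqref{def_g_critical} with the additional term $\frac16\alpha^3(\cos^3\theta-\frac3{n+2}\cos\theta)$ so that $\langle g\rangle=\langle g\cos\theta\rangle=0$, and then verifying by direct expansion of $\mathcal S(h)=\ln(1+h)-\langle\ln(1+h)\rangle-\alpha\cos\theta$ that all $\alpha^2$-terms cancel and the $\alpha^3$-terms collapse to exactly $\frac{\alpha^3}{n(n+2)}\cos\theta$, whence $\langle\mathcal S(h)^2\rangle=\langle g^2\rangle+\frac{\alpha^6}{n^3(n+2)^2}+o(\cdot)$. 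This exact cancellation is the decisive structural fact behind your dissipation inequality, and it is neither stated nor verified in your sketch; without it the ``off the slow manifold'' and ``on the slow manifold'' regimes cannot be glued, because a residual cross term of order $\alpha^4$ or a sign-indefinite $\alpha^4$ contribution would destroy the $\mathcal F^{3/2}$ bound. The paper also replaces your two-regime case analysis by the single elementary inequality $C\langle g^2\rangle+\alpha^6\geqslant(C'\langle g^2\rangle+\alpha^4)^{3/2}$, valid for $\alpha$ and $\langle g^2\rangle$ small, which is cleaner than gluing an exponential regime to an algebraic one. Finally, note that the interpolation exponents needed to absorb the error terms ($\|g\|_\infty^{2+\frac13}\leqslant C\langle g^2\rangle^{\mu}$ with $\mu>1$, forcing $1-\frac{n-1}{2s}>\frac67$) are what dictate $s>\frac{7(n-1)}2$; your remark that this threshold is ``exactly what is needed'' is correct but should be tied to the order-$\alpha^7$ errors in the dissipation expansion, not only to the quartic errors in $\mathcal F$.
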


\begin{proof}
As in the previous section, we work on $[t_0,+\infty)$.
We write $f=1+h$ and as in the previous case, we suppose that $J[f_0]\neq 0$. 
By the same argument used in Proposition~\ref{distinction_J_zero}, we have that $J[f(t)]\neq 0$ for all $t>0$, so we define $\Omega (t)$ as the unit vector $\frac{J[f(t)]}{|J[f(t)]|}$. 
Similarly we denote $\langle \cdot \rangle $ for the mean of a function on the unit sphere and $\cos\theta $ for $\omega \cdot \Omega $. 

We have $\langle h\rangle =0$. Since $\Omega $ is the direction of $J[f]=\langle (1+h)\omega \rangle =\langle h\omega \rangle $, we get that~$\langle h\omega \rangle =\langle h\cos\theta \rangle \Omega $.

We perform an expansion of the free energy and its dissipation in terms of~$h$.
We get, using \eqref{def_F} and taking $\sigma =\frac1n$,
\begin{equation*}
\mathcal F(1+h)= \tfrac1n(\tfrac12\langle h^2\rangle -\tfrac16\langle h^3\rangle +\tfrac1{12}\langle h^4\rangle)-\tfrac12\langle h\cos\theta \rangle^2 + O(\|h\|_{\infty}^5).
\end{equation*}
Now we write $\alpha =n\langle h\cos\theta \rangle $ and we define 
\begin{equation}
g=h-\alpha \cos\theta -\tfrac12\alpha^2(\cos^2\theta -\tfrac1n)-\tfrac16\alpha^3(\cos^3\theta -\tfrac3{n+2}\cos\theta).
\label{def_g_critical}
\end{equation}
We have $\langle \cos^4\theta \rangle =\frac3{n(n+2)}$ (we have used the formula \eqref{induction_ap} to compute $\frac{4!a_2}{a_0}=\langle \cos^4\theta \rangle $). 
Since we have $\langle \cos^3\theta \rangle =\langle \cos\theta \rangle =0$, and $\langle \cos^2\theta \rangle =\frac1n$, we get $\langle g\rangle =\langle g\cos\theta \rangle =0$.
We will see that the terms of order $2$ in $g$ will not vanish in the expansion of the free energy and the dissipation term.
But we will need to expand the free energy in~$\alpha $ up to order $4$, and the dissipation term up to order $6$ in $\alpha$.
We have
\begin{align}
\label{h_L2_critical}
\tfrac12\langle h^2\rangle &=\tfrac12\langle g^2\rangle +\tfrac1{2n}\alpha^2+\tfrac{n-1}{4n^2(n+2)}\alpha^4+\tfrac1{2}\alpha^2\langle g\cos^2\theta \rangle +O(\alpha^3\|g\|_\infty +\alpha^5),\\
-\tfrac16\langle h^3\rangle &=-\tfrac{n-1}{2n^2(n+2)}\alpha^4-\tfrac1{2}\alpha^2\langle g\cos^2\theta \rangle +O(\|g\|_\infty^3+\alpha \|g\|^2_\infty +\alpha^3\|g\|_\infty +\alpha^5),\nonumber\\
\tfrac1{12}\langle h^4\rangle &=\tfrac1{4n(n+2)}\alpha^4+O(\|g\|_\infty^4+\alpha \|g\|_\infty^3+\alpha^2\|g\|^2_\infty +\alpha^3\|g\|_\infty +\alpha^5).\nonumber
\end{align}
We finally get
\begin{equation}
\mathcal F(1+h)= \tfrac1{2n}\langle g^2\rangle + \tfrac1{4n^3(n+2)}\alpha^4 + O(\|g\|_\infty^3+\alpha \|g\|^2_\infty +\alpha^3\|g\|_\infty +\alpha^5).
\label{expansion_F_critical}
\end{equation}
Using the inequality $a^pb^q\leqslant sa^{\frac ps}+(1-s)b^{\frac q{1-s}}$ for $s\in (0,1)$, with $a=\alpha $ and $b=\|g\|_\infty $, we get that $\alpha \|g\|_\infty^2\leqslant \frac15\alpha^5+\frac45\|g\|_\infty^{2+\frac12}$ and $\alpha^3\|g\|_\infty \leqslant \frac35\alpha^5+\frac25\|g\|_\infty^{2+\frac12}$. 

By Sobolev embedding and interpolation, as in the previous section, we have 
\begin{equation}
\|g\|_\infty \leqslant C\|g\|_{L^2}^{1-\frac{n-1}{2s}}\|g\|^{\frac{n-1}{2s}}_{H^s},
\label{interpolation_critical}
\end{equation}
with $1-\frac{n-1}{2s}>\frac67$. 

Since $\alpha $ is controlled by $\|h\|_{H^s}$, using the definition \eqref{def_g_critical} of $g$, we have a bound \mbox{for~$\|g\|_{H^s}$ on $[t_0,+\infty)$,} depending only on $s$ and $K$. 
We finally get $\|g\|^{2+\frac12}_\infty \leqslant C\langle g^2\rangle^\mu $, \mbox{with~$\mu >\frac12(2+\frac12)\frac67>1$.}
 
So using \eqref{h_L2_critical} and \eqref{expansion_F_critical}, we get that for any $\varepsilon >0$, there exists $\delta >0$ such if~$\|h\|_{L^2}\leqslant \delta $, we have
\begin{gather}
(1-\varepsilon)(\langle g^2\rangle +\tfrac1n \alpha^2)\leqslant \langle h^2\rangle \leqslant (1+\varepsilon)(\langle g^2\rangle +\tfrac1n \alpha^2)\nonumber\\
\label{F_estimate_critical}
(1-\varepsilon)(\tfrac1{2n}\langle g^2\rangle + \tfrac1{4n^3(n+2)}\alpha^4)\leqslant \mathcal F(1+h)\leqslant \tfrac{1+\varepsilon}{4n^3(n+2)}(2n^2(n+2)\langle g^2\rangle + \alpha^4).
\end{gather}
From that, up to take a smaller $\delta $, we obtain
\begin{equation}
\label{Fh}
\tfrac{1-\varepsilon}{1+\varepsilon}\,2n\mathcal F(1+h)\leqslant \langle h^2\rangle \leqslant \tfrac{1+\varepsilon}{\sqrt{1-\varepsilon}}\,2\sqrt{n(n+2)\mathcal F(1+h)}.
\end{equation}
We now estimate the dissipation term. 
We use the definition \eqref{def_D} of $\mathcal D(f)$ and the Poincaré inequality to get:
\begin{align}
\mathcal D(f)&=\langle (1+h)|\nabla (\tfrac1n\ln(1+h)-\langle (1+h)\omega \rangle \cdot \omega)|^2\rangle \nonumber\\
&=\langle (1+h)|\nabla (\tfrac1n\ln(1+h)-\langle h\cos\theta \rangle \cos\theta)|^2\rangle \nonumber\\
&\geqslant \tfrac{n-1}{n^2}(1-\|h\|_{\infty})\langle [\underbrace{\ln(1+h)-\langle \ln(1+h)\rangle -n\langle h\cos\theta \rangle \cos\theta}_{\mathcal S(h)}]^2\rangle.
\label{expansion_D_critical}
\end{align}
We have 
\begin{align*}
\mathcal S(h)&=\ln(1+h)-\langle \ln(1+h)\rangle -n\langle h\cos\theta \rangle \cos\theta \\
&=h-\langle h\rangle -\alpha \cos\theta -\tfrac12(h^2-\langle h^2\rangle)+\tfrac13(h^3-\langle h^3\rangle) +O(\|h\|^4).
\end{align*}
We compute,
\begin{align*}
h-\langle h\rangle -\alpha \cos\theta &=g + \frac12\alpha^2(\cos^2\theta -\tfrac1n)+\tfrac16\alpha^3(\cos^3\theta -\tfrac3{n+2}\cos\theta)\\
-\tfrac12(h^2-\langle h^2\rangle)&=-\tfrac12(\alpha^2+\alpha^3\cos\theta)(\cos^2\theta -\tfrac1n)+O(\|g\|^2+\alpha \|g\|_\infty +\alpha^4)\\
\tfrac13(h^3-\langle h^3\rangle)&=\tfrac13\alpha^3\cos^3\theta +O(\|g\|_\infty^3+\alpha \|g\|_\infty^2+\alpha^2\|g\|_\infty +\alpha^4).
\end{align*}
So
\begin{align}
\langle \mathcal S(h)^2\rangle &=\langle [g+\tfrac16\alpha^3(\tfrac3n-\tfrac3{n+2})\cos\theta)]^2\rangle +O(\|g\|^3+\alpha \|g\|^2+\alpha^4\|g\|_\infty +\alpha^7)\nonumber\\
&=\langle g^2\rangle +\tfrac1{n^3(n+2)^2}\alpha^6+O(\|g\|_\infty^3+\alpha \|g\|_\infty^2+\alpha^4\|g\|_\infty +\alpha^7).
\label{expansion_s}
\end{align}
As before, we get that $\alpha \|g\|_\infty^2\leqslant \frac17\alpha^7+\frac67\|g\|_\infty^{2+\frac13}$ and $\alpha^4\|g\|_\infty \leqslant \frac47\alpha^7+\frac37\|g\|_\infty^{2+\frac13}$. 
\mbox{Using~\eqref{interpolation_critical},} we get $\|g\|^{2+\frac13}_\infty \leqslant C\langle g^2\rangle^\mu $, with $\mu >\frac12(2+\frac13)\frac67=1$.
So using \eqref{expansion_D_critical} and~\eqref{expansion_s}, up to take a smaller $\delta $, we have, for $\|h\|_{L^2}\leqslant \delta $,
\begin{equation*}
\mathcal D(f) \geqslant (1-\varepsilon)\tfrac{n-1}{n^2}(\langle g^2\rangle +\tfrac1{n^3(n+2)^2}\alpha^6).
\end{equation*}
Now for any $C,C'>0$, if we take $\alpha $ and $g$ sufficiently small (so again up to take a smaller $\delta $), we have that $C\langle g^2\rangle +\alpha^6\geqslant (C'\langle g^2\rangle +\alpha^4)^{\frac32}$. So we get 
\begin{equation*}
\mathcal D(f) \geqslant (1-\varepsilon)\tfrac{n-1}{n^5(n+2)^2}(2n^2(n+2)\langle g^2\rangle +\alpha^4)^{\frac32}.
\end{equation*}
Putting this together with \eqref{F_estimate_critical} and the conservation relation \eqref{conservation_free_energy}, we get that for any~$0<\varepsilon <1$, there exists $\delta_0>0$ such, as soon as $\|h\|_{L^2}\leqslant \delta_0$, we have
\begin{equation*}
\frac{\mathrm d}{\mathrm dt}\mathcal F(f)=-\mathcal D(f)\leqslant -\frac{8(n-1)(1-\varepsilon)}{(1+\varepsilon)^{\frac32}\sqrt{n(n+2)}}[\mathcal F(f)]^{\frac32}.
\end{equation*}
Then we obtain, for all $T$ such that $\|h\|_{L^2}\leqslant \delta_0$ on $[t_0,T]$,
\begin{equation}
\mathcal F(f(T))^{-\frac12}\geqslant \mathcal F(f(t_0))^{-\frac12}+\tfrac{4(n-1)(1-\varepsilon)}{(1+\varepsilon)^{\frac32}\sqrt{n(n+2)}}(t-t_0).
\label{gronwall_critical}
\end{equation}
Then, using \eqref{Fh}, we get that for $t\in [t_0,T]$,
\begin{equation*}
\|h\|^{-2}_{L^2}\geqslant \textstyle\frac{\sqrt{1-\varepsilon}}{(1+\varepsilon)\,2\sqrt{n(n+2)}}[\sqrt{\frac{2n(1-\varepsilon)}{1+\varepsilon}}\|h(t_0)\|_{L^2}^{-1}+\tfrac{4(n-1)(1-\varepsilon)}{(1+\varepsilon)^{\frac32}\sqrt{n(n+2)}}(t-t_0)].
\end{equation*}
We write $C=\frac{(1+\varepsilon)^{\frac54}}{(1-\varepsilon)^{\frac34}}$ (a one-to-one correspondence between $0<\varepsilon <1$ and~$C>1$) and we get 
\begin{equation}
\|h\|_{L^2}\leqslant C\left[\tfrac1{\sqrt{2(n+2)}\|h(t_0)\|_{L^2}}+\tfrac{2(n-1)}{n(n+2)}(t-t_0)\right]^{-\frac12}.
\label{finalgronwall_critical}
\end{equation}
So if we take $\delta <\min(\delta_0,\frac1{C^2\sqrt{2(n+2)}}\,\delta_0^2)$, and $\|h(t_0)\|_{L^2}\leqslant \delta $, we get that $\|h\|_{L^2}\leqslant \delta_0$ on~$[t_0,T]$ for all~$T\geqslant t_0$. 
Otherwise, the largest of such a $T$ would satisfy 
\begin{equation*}
\delta_0=\|h(T)\|_{L^2}\leqslant C\left[\tfrac1{\sqrt{2(n+2)}\delta}\right]^{-\frac12}<\delta_0.
\end{equation*}
So the inequality \eqref{finalgronwall_critical} holds for all $t\in [t_0,+\infty)$, which ends the proof.
\end{proof}

With this proposition, since $f$ tends to the uniform distribution in any $H^s(\mathbb{S})$, we get that for any~$r<\frac{2(n-1)}{n(n+2)}$, there exists $t_0$ such that we have $\|f(t)-1\|_{L^2}\leqslant \frac1{\sqrt{r(t-t_0)}}$, for~$t\geqslant t_0$.
We can even get rid of the $t_0$ in this inequality since for any $r<\widetilde r<\frac{2(n-1)}{n(n+2)}$, for~$t$ sufficiently large, we have $\frac1{\sqrt{\widetilde r(t-t_0)}}\leqslant \frac1{\sqrt{rt}}$.

As in the previous section, using interpolation to deal with the other Sobolev norms of the solution would lead, for any~$\eta >0$ and $t$ sufficiently large, to an inequality of the form~$\|f(t)-1\|_{H^p}\leqslant C_\eta t^{-\frac12+\eta}$.
But we can actually do slightly better.
Indeed we have, following the notations of the proof and using \eqref{def_g_critical},

\begin{equation*}
\|h\|_{H^s}\leqslant |\alpha |\|\cos\theta \|_{H^p}+C_2\alpha^2+C_3|\alpha |^3+\|g\|_{H^p}.
\end{equation*}
We have $\|\cos\theta \|_{H^p}=(n-1)^{\frac p2}$.
We take $t_0>0$ satisfying the conditions of the proposition and such that $\|h\|_{L^2}\leqslant \delta $.
We have that $g$ is uniformly bounded in any~$H^p(\mathbb{S})$, and so by interpolation, we have $\|g\|_{H^s}\leqslant C_\eta \|g\|^{1-\eta}_{L^2}$ for any $\eta >0$. Now using~\eqref{gronwall_critical} and \eqref{F_estimate_critical}, we get

\begin{equation*}
(\tfrac1{2n}\langle g^2\rangle + \tfrac1{4n^3(n+2)}\alpha^4)^{-\frac12}\geqslant \tfrac{4(n-1)(1-\varepsilon)^{\frac32}}{(1+\varepsilon)^{\frac32}\sqrt{n(n+2)}}(t-t_0),
\end{equation*}
which gives $\|g\|_{L^2}=O(t^{-1})$ and $\alpha^2\leqslant \tfrac{(1+\varepsilon)^{\frac32}n(n+2)}{2(n-1)(1-\varepsilon)^{\frac32}(t-t_0)}$.
So finally, for any $\eta >0$, we have that~$\|h\|_{H^p}\leqslant (n-1)^{\frac p2}\sqrt{\tfrac{(1+\varepsilon)^{\frac32}n(n+2)}{2(n-1)(1-\varepsilon)^{\frac32}(t-t_0)}}+O(t^{-1+\eta})$. 
This gives that there exists~$t_1\geqslant t_0$ such that for all $t\geqslant t_1$, we have $\|h\|_{H^p}\leqslant (1+\varepsilon)(n-1)^{\frac p2}\sqrt{\tfrac{(1+\varepsilon)^{\frac32}n(n+2)}{2(n-1)(1-\varepsilon)^{\frac32}(t-t_0)}}$. 

This is true for any $\varepsilon >0$. In conclusion, we have that for any $r<\frac{2}{n(n-1)^{p-1}(n+2)}$, there exists $t_1$ such that for $t\geqslant t_1$, we have $\|f(t)-1\|_{H^p}\leqslant \frac1{\sqrt{rt}}$.

\subsection{Summary}
In summary we can state the following theorem:

\begin{thm}
Convergence to equilibrium.

Suppose $f_0$ is a probability measure, belonging to $H^s(\mathbb{S})$ (this is always the case for some $s<-\frac{n-1}2$).

Then there exists a unique weak solution $f$ to Doi equation \eqref{Doi_eq}, satisfying the initial condition $f(0)=f_0$.

Furthermore, this is a classical solution, positive for all time $t>0$, and belonging to $C^\infty ((0,+\infty)\times \mathbb{S})$.

If $J[f_0]\neq 0$, then we have the three following cases, depending on $\sigma $.
\begin{itemize}
\item If $\sigma >\frac1n$, then $f$ converges exponentially fast to the uniform distribution, with global rate~$(n-1)(\sigma -\frac1n)$ in any $H^p$ norm. 

More precisely, for all $t_0>0$, there exists a constant $C>0$ depending only on~$t_0, s, p, n$, and $\sigma $, such that for all $t\geqslant t_0$, we have
\begin{equation*}
\|f(t)-1\|_{H^{p}}\leqslant C\|f_0\|_{H^{s}}e^{-(n-1)(\sigma -\frac1n)t}.
\end{equation*}
\item If $\sigma <\frac1n$, then there exists $\Omega \in \mathbb{S}$ such that $f$ converges exponentially fast to $M_{\kappa \Omega}$, with asymptotic rate $r_\infty (\sigma)>0$ in any $H^p$ norm. 

More precisely, for all $r<r_\infty (\sigma)$, there exists $t_0>0$ (depending on $f_0$) such that for all $t>t_0$, we have 
\begin{equation*}
\|f(t)-M_{\kappa \Omega}\|_{H^{p}}\leqslant e^{-rt}.
\end{equation*} 

When $\sigma $ is close to $\frac1n$ we have that $r_\infty (\sigma)\sim2(n-1)(\frac1n-\sigma)$.

\item If $\sigma =\frac1n$, then $f$ converges to the uniform distribution in any $H^p$ norm, with asymptotic rate $\sqrt{\frac{n(n-1)^{p-1}(n+2)}{2t}}$.

More precisely, for all $r<\frac2{n(n-1)^{p-1}(n+2)}$, there exists $t_0>0$ (depending on $f_0$) such that for all $t>t_0$, we have
\begin{equation*}
\|f(t)-1\|_{H^{p}}\leqslant \frac{1}{\sqrt{rt}}.
\end{equation*}
\end{itemize}

If $J[f_0]=0$ the equation reduces to the heat equation on the sphere, so $f$ converges to the uniform distribution, exponentially with global rate $2n\sigma $ in any $H^p$ norm. 

\end{thm}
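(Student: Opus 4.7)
This summary theorem is an assembly of the results established in Sections~\ref{general_results}--\ref{convergence}; no genuinely new argument is needed, only a careful dispatch to the correct earlier statement for each regime, together with an interpolation to upgrade $L^2$ decay into $H^p$ decay. My plan is to split it into the well-posedness part and the three long-time regimes (plus the degenerate $J[f_0]=0$ case), cite the right result each time, and use Proposition~\ref{regularity_boundedness} to justify working with a smooth initial datum after an arbitrarily small time~$t_0>0$.

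For existence, uniqueness, $C^\infty$ regularity on $(0,+\infty)\times\mathbb S$ and positivity, I would simply invoke Theorem~\ref{generalresults} combined with Proposition~\ref{positivity}. The instantaneous regularization bound $\|f(t)\|_{H^{s+m}}^2\leqslant C(1+t^{-m})\|f_0\|_{H^s}^2$ from Proposition~\ref{regularity_boundedness} lets me, for any fixed $t_0>0$, replace $f_0\in H^s$ by $f(t_0)$, which lies in every Sobolev space. All subsequent constants are then allowed to depend on $t_0,s,p,n,\sigma$ as stated.

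For the subcritical regime $\sigma>\tfrac1n$, the claim is exactly Theorem~\ref{new_entropy}: the new entropy $\mathcal H(f)=\|f-1\|^2_{\widetilde H^{-(n-1)/2}}$ decays at rate $2(n-1)(\sigma-\tfrac1n)$, and the high/low mode splitting already given there promotes this to any $H^p$ norm with rate $(n-1)(\sigma-\tfrac1n)$. For the supercritical regime $\sigma<\tfrac1n$ with $J[f_0]\neq 0$, Proposition~\ref{distinction_J_zero} ensures that $f(t)-M_{\kappa\Omega(t)}\to 0$ in every $H^q$; consequently the hypotheses of Proposition~\ref{asymptotic_rate} are eventually met, yielding $L^2$ convergence toward a fixed $M_{\kappa\Omega_\infty}$ at any rate $r<r_\infty(\sigma)$. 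To pass to $H^p$, I would interpolate
\begin{equation*}
\|f-M_{\kappa\Omega_\infty}\|_{H^p}\leqslant C\|f-M_{\kappa\Omega_\infty}\|_{L^2}^{1-p/q}\|f-M_{\kappa\Omega_\infty}\|_{H^q}^{p/q},
\end{equation*}
with $q$ large and the $H^q$ factor controlled uniformly by Proposition~\ref{regularity_boundedness}, then absorb the multiplicative constant by slightly lowering the rate to get the clean bound $\|f(t)-M_{\kappa\Omega}\|_{H^p}\leqslant e^{-rt}$. The case $J[f_0]=0$ is directly Proposition~\ref{distinction_J_zero}, the second eigenvalue of $-\Delta_\omega$ being $2n$.

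For the critical case $\sigma=\tfrac1n$, the $L^2$ bound of Proposition~\ref{algebraic_decay} gives the rate $\sqrt{n(n+2)/(2t)}$. The sharper factor $(n-1)^{p/2}$ in the $H^p$ rate comes from the remark following that proposition: the leading contribution to $\|h\|_{H^p}$ comes from the $\alpha\cos\theta$ mode (contributing $(n-1)^{p/2}|\alpha|$), while the remainder $g$ satisfies $\|g\|_{L^2}=O(t^{-1})$ and is uniformly bounded in every $H^q$, so by interpolation $\|g\|_{H^p}=O(t^{-1+\eta})$ is lower order. I would reproduce that computation and again absorb constants by lowering $r$ slightly. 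The only real bookkeeping point---and the closest thing to an obstacle---is the gluing: one starts from an arbitrary $f_0\in H^s$ with possibly very negative $s$, must regularize for time $t_0$, and must keep track of how the final constants depend on $f_0$ through $\|f(t_0)\|_{H^q}$. This is routine but is the one place where care is needed.
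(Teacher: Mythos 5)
Your proposal is correct and follows essentially the same route as the paper: the theorem is proved there precisely by dispatching to Theorem~\ref{generalresults} and Proposition~\ref{positivity} for well-posedness, Theorem~\ref{new_entropy} (together with Proposition~\ref{regularity_boundedness} and the embedding for $p<-\frac{n-1}2$) for the subcritical case, Propositions~\ref{distinction_J_zero} and~\ref{asymptotic_rate} plus the interpolation/constant-absorption argument for the supercritical case, and Proposition~\ref{algebraic_decay} with the subsequent remark isolating the $\alpha\cos\theta$ mode for the critical case. The bookkeeping you flag (passing through $f(t_0)$ and tracking the dependence of constants on $\|f_0\|_{H^s}$) is exactly how the paper handles it.
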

 
For the subcritical case $\sigma >\frac1n$, we used Theorem \ref{new_entropy}. 
In the case where~$p<-\frac{n-1}2$, a simple embedding gives $\|f(t)-1\|_{H^{p}}\leqslant \|f(t)-1\|_{H^{-\frac{n-1}2}}$ so we only have to show the result for $p\geqslant -\frac{n-1}2$. 
We get
\begin{equation*}
\|f-1\|^2_{H^{p}}\leqslant C\|f(t_0)-1\|_{H^{p}}e^{-(n-1)(\sigma -\tfrac1n)(t-t_0)}\leqslant C\|f(t_0)\|_{H^{p}}e^{-(n-1)(\sigma -\tfrac1n)(t-t_0)}.
\end{equation*}
The last inequality comes from the fact that $f(t_0)$ is a probability density function, so~$f(t_0)-1$ is the orthogonal projection of $f(t_0)$ on the space of mean-zero functions.
Using Proposition \ref{regularity_boundedness}, we get $\|f(t_0)\|_{H^{p}}\leqslant C_{t_0}\|f_0\|_{H^{s}}$ in the case $p\geqslant s$. 
Otherwise we just use a simple embedding to get first $\|f(t_0)\|_{H^{p}}\leqslant \|f(t_0)\|_{H^{s}}$ and then by the same proposition $\|f(t_0)\|_{H^{p}}\leqslant C\|f_0\|_{H^{s}}$.

Then the results in the case $\sigma <\frac1n$ and $\sigma =\frac1n$ are a summary of the conclusions of the two previous subsections.
However, although it gives a clear understanding of how fast the solution converges to the equilibrium, in some sense, this summary is not as accurate as Propositions \ref{asymptotic_rate} and \ref{algebraic_decay}, which give a kind of stability: starting close to an equilibrium, the solution stays close. 

\section{Conclusion}
In this paper, we have investigated all the possible dynamics in large time for the Doi-Onsager equation \eqref{Doi_eq} with dipolar potential. 
We have obtained a rate of convergence towards the equilibrium given any initial condition and any noise parameter $\sigma >0$, for all dimension $n\geqslant 2$. 

The rate of convergence to the anisotropic steady state, in the case $\sigma <\frac1n$, depends on a Poincaré constant which does not seem easy to estimate. 
A better knowledge of the behavior of this constant, for example as the noise parameter $\sigma $ tends to zero, would be useful to understand the limiting case $\sigma =0$, where we have existence and uniqueness of the solution.
In this limit, the steady states are given by the sum of two antipodal Dirac masses $(1-\alpha)\delta_{\Omega}+\alpha \delta_{-\Omega}$ with $\Omega \in \mathbb{S}$ and $0\leqslant \alpha \leqslant \frac12$. We conjecture that if the initial condition is continuous (and with non zero initial momentum), then the solution converges to one of these steady states, with $\alpha =0$.

It should also be possible to get the same kind of rates for the Maier-Saupe potential, but there the classification of the initial conditions leading to a given type of equilibria is much more difficult, in particular in the case where two types of equilibria are stable. 

\section*{Acknowledgements}

The authors would like to thank Pierre Degond for initiating this project, suggesting to work on it, and for many stimulating discussions.\\
The research of J.-G. L. was partially supported by NSF grant DMS 10-11738.
The authors are also thankful for the support of the Mathematical Sciences Center at Tsinghua University.
\appendix
\section{Appendix}
\subsection{Using the spherical harmonics}
\label{appendix_harmonics}

For the following we will use the spherical harmonics, so we recall some preliminaries results. 
%We refer to Chapter 9 of the book \cite{andrews1999special}.
We fix $n\geqslant 2$ and work on $\mathbb{R}^n$ and its unit sphere $\mathbb{S}_{n-1}$.

\begin{deft}
\label{deft_sph_harm}
A spherical harmonic of degree $\ell $ on $\mathbb{S}_{n-1}$ is the restriction to~$\mathbb{S}_{n-1}$ of a homogeneous polynomial of degree $\ell $ in $n$ variables (seen as a function $\mathbb{R}^n\rightarrow \mathbb{R}$) which is an harmonic function (a function $P$ such that $\Delta P=0$, where $\Delta $ is the usual Laplace operator in $\mathbb{R}^n$). 
We denote $\mathcal{H}^{(n)}_\ell $ the set of spherical harmonics of degree~$\ell $ on~$\mathbb{S}_{n-1}$ (including~$0$ so they are vector spaces).
\end{deft}
We know that the space of homogeneous polynomials of degree $\ell $ in $n$ variables has dimension $\binom{n+\ell -1}{n-1}$ (the number of $n$-tuples $(i_1,\dots i_n)$ of sum $\ell $). 
Writing an arbitrary homogeneous polynomial $P$ of degree $\ell $ under the form $P=\sum_{i=0}^\ell Q_{\ell -i}X_n^{i}$, with the polynomials $Q_i$ being homogeneous of degree $i$ in the first $n-1$ variables, and imposing that $P$ is an harmonic function gives the following conditions (taking the term in $X_n^{i-2}$), for $i\in \llbracket 0,\ell -2\rrbracket $: $\Delta Q_{\ell -i}+(i+1)(i+2)Q_{\ell -i-2}=0$. 
Finally the polynomial~$P$ is only determined by the polynomials $Q_\ell $ and $Q_{\ell -1}$ in $n-1$ variables, of respective degrees $\ell $ and~$\ell -1$. This gives the dimension of the space of spherical harmonics.

\begin{prop}
The dimension of $\mathcal{H}^{(n)}_\ell $ is given by
\begin{equation*}
k^{(n)}_\ell = \tbinom{n+\ell -2}{n-2}+ \tbinom{n+\ell -3}{n-2}=\tbinom{n+\ell -1}{n-1}- \tbinom{n+\ell -3}{n-1}.
\end{equation*} 
\end{prop}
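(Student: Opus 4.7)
The proof essentially unpacks the sketch already given in the paragraph preceding the statement, so the plan is to turn that outline into a rigorous dimension count by exhibiting an explicit linear isomorphism.

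First, I would recall the elementary "stars and bars" computation showing that the space $\mathcal{P}^{(n)}_\ell$ of homogeneous polynomials of degree $\ell$ in $n$ variables has dimension $\binom{n+\ell-1}{n-1}$: a monomial is determined by an $n$-tuple of nonnegative integers summing to $\ell$. Next, I would fix a separation of variables by writing any $P\in\mathcal{P}^{(n)}_\ell$ uniquely as
\begin{equation*}
P(X_1,\dots,X_n)=\sum_{i=0}^{\ell} Q_{\ell-i}(X_1,\dots,X_{n-1})\,X_n^{\,i},
\end{equation*}
where $Q_j\in\mathcal{P}^{(n-1)}_j$. Writing $\Delta=\Delta'+\partial_{X_n}^2$ where $\Delta'$ is the Laplacian on the first $n-1$ variables, and collecting powers of $X_n$, one obtains
\begin{equation*}
\Delta P=\sum_{i=0}^{\ell-2}\bigl(\Delta' Q_{\ell-i}+(i+1)(i+2)\,Q_{\ell-i-2}\bigr)X_n^{\,i}.
\end{equation*}

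The key step is to observe that this yields a linear isomorphism
\begin{equation*}
\Phi:\mathcal{H}^{(n)}_\ell\;\longrightarrow\;\mathcal{P}^{(n-1)}_\ell\times\mathcal{P}^{(n-1)}_{\ell-1},\qquad P\mapsto(Q_\ell,Q_{\ell-1}).
\end{equation*}
Injectivity is immediate from the recurrence $Q_{\ell-i-2}=-\frac{1}{(i+1)(i+2)}\Delta' Q_{\ell-i}$, which forces $Q_{\ell-2},Q_{\ell-3},\dots,Q_0$ to vanish once $Q_\ell$ and $Q_{\ell-1}$ do. Surjectivity follows by taking arbitrary $Q_\ell\in\mathcal{P}^{(n-1)}_\ell$ and $Q_{\ell-1}\in\mathcal{P}^{(n-1)}_{\ell-1}$, defining the remaining $Q_{\ell-i-2}$ by the same recurrence, and checking that the resulting $P$ lies in $\mathcal{H}^{(n)}_\ell$ — which is built in by construction. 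This is the one genuinely verificational step, but it is purely formal.

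Taking dimensions and applying the stars and bars formula to $\mathcal{P}^{(n-1)}_\ell$ and $\mathcal{P}^{(n-1)}_{\ell-1}$ gives
\begin{equation*}
k^{(n)}_\ell=\binom{n+\ell-2}{n-2}+\binom{n+\ell-3}{n-2},
\end{equation*}
which is the first stated equality. For the second, I would apply Pascal's identity twice: $\binom{n+\ell-1}{n-1}=\binom{n+\ell-2}{n-2}+\binom{n+\ell-2}{n-1}$ and $\binom{n+\ell-2}{n-1}=\binom{n+\ell-3}{n-2}+\binom{n+\ell-3}{n-1}$, so that $\binom{n+\ell-1}{n-1}-\binom{n+\ell-3}{n-1}=\binom{n+\ell-2}{n-2}+\binom{n+\ell-3}{n-2}$, completing the proof. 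I do not foresee a real obstacle here; the only point requiring genuine attention is the bookkeeping in the recurrence and ensuring the edge cases $\ell=0,1$ (where one or both of $Q_\ell,Q_{\ell-1}$ is trivial) fit the formula, which they do with the standard convention $\binom{m}{k}=0$ for $m<k$.
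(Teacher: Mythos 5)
Your proof is correct and follows essentially the same route as the paper, which sketches exactly this argument (the decomposition $P=\sum_{i=0}^{\ell}Q_{\ell-i}X_n^{i}$, the recurrence $\Delta' Q_{\ell-i}+(i+1)(i+2)Q_{\ell-i-2}=0$ forced by harmonicity, and two applications of Pascal's rule for the second expression) in the paragraph preceding the proposition. Your explicit verification of injectivity, surjectivity, and the edge cases $\ell=0,1$ merely makes that sketch fully rigorous.
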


The second expression comes from two successive applications of Pascal’s triangle rule, and will be useful in the following.
It can also be seen by the following property\footnote{This can be shown using the appropriate inner product $(P,Q)\mapsto P(\mathrm D)Q$ on the space of homogeneous polynomials $P$ of degree $\ell $, where $P(\mathrm D)$ is defined as~$\frac{\partial^\ell}{\partial_{X_1}^{\alpha_1}\dots \partial_{X_n}^{\alpha_n}}$ if~$P=X_1^{\alpha_1}\dots X_n^{\alpha_n}$, and extended by linearity (so for example, we have that $|X|^2(\mathrm D)=\Delta $).
If we denote by $E$ the space of polynomials of the form $P=|X|^2Q$, with $Q$ of degree $\ell -2$, then the orthogonal of $E$ consists in all the polynomials $P$ such that for all $Q$ of degree $\ell -2$, we have $(|X|^2Q)(\mathrm D)P=Q(\mathrm D)\Delta P=0$, that is to say in all the polynomials $P$ such that $\Delta P=0$. So the claimed decomposition is just the orthogonal decomposition, on $E$ and $E^\perp $.}: every homogeneous polynomial $P$ of degree $\ell $ can be decomposed in a unique way as~$H+|X|^2Q$, where $H$ is harmonic of degree $\ell $ and $Q$ is homogeneous of degree~$\ell -2$. 
Iterating this decomposition, we get 
\begin{equation*}
P=H_\ell +|X|^2H_{\ell -2}+|X|^4H_{\ell -4}+\dots +
\begin{cases}|X|^\ell H_{0} & \ell \text{ even}\\|X|^{\ell -1}H_{1} & \ell \text{ odd}
\end{cases},
\end{equation*}
where the polynomials $H_i$ are harmonic of degree $i$. This shows that any restriction of a polynomial on the sphere is equal to a sum of spherical harmonics (the terms $|X|^{2i}$ are constant when restricted to the sphere). This gives, with the Stone-Weierstrass theorem, that the sum of spherical harmonics are dense in $L^2(\mathbb{S}_{n-1})$ (since they are dense in the continuous functions). Together with the radial decomposition of the Laplacian $\Delta =\frac1{r^{n-1}}\partial_r(r^{n-1}\partial_r)+\frac1{r^2}\Delta_\omega $ (where $\Delta_\omega $ is the Laplace Beltrami operator on the sphere $\mathbb{S}_{n-1}$, which is self-adjoint in $L^2(\mathbb{S}_{n-1})$), we get the following result:
\begin{prop}
The spaces $\mathcal{H}^{(n)}_\ell $, for $\ell \in \mathbb{N}$, are the eigenspaces of the Laplace Beltrami operator $\Delta_\omega $ on the sphere $\mathbb{S}_{n-1}$ for the eigenvalues $-\ell (\ell +n-2)$.
They are pairwise orthogonal and complete in $L^2(\mathbb{S}_{n-1})$. 
\end{prop}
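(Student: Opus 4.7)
The plan is to verify the three claims in order: the eigenvalue identity, pairwise orthogonality, and completeness, exploiting the radial decomposition of $\Delta$ and the polynomial decomposition already recalled just before the proposition.

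First I would prove the eigenvalue identity by a direct calculation. Fix a spherical harmonic $Y_\ell \in \mathcal{H}^{(n)}_\ell$, obtained as the restriction to $\mathbb{S}_{n-1}$ of a homogeneous harmonic polynomial $P$ of degree $\ell$. Then by homogeneity $P(x) = r^\ell Y_\ell(\omega)$ where $x = r\omega$. Plugging into the radial-angular decomposition $\Delta = \frac{1}{r^{n-1}}\partial_r(r^{n-1}\partial_r) + \frac{1}{r^2}\Delta_\omega$ and using $\Delta P = 0$ gives
\begin{equation*}
0 = \ell(\ell+n-2)\, r^{\ell-2} Y_\ell(\omega) + r^{\ell-2}\,\Delta_\omega Y_\ell(\omega),
\end{equation*}
so $\Delta_\omega Y_\ell = -\ell(\ell+n-2) Y_\ell$, which is the claimed eigenvalue relation.

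Next, pairwise orthogonality follows from self-adjointness of $\Delta_\omega$ on $L^2(\mathbb{S}_{n-1})$ (an integration by parts on the sphere with no boundary) together with the fact that the map $\ell \mapsto \ell(\ell+n-2)$ is strictly increasing on $\mathbb{N}$ for $n \geq 2$, so distinct degrees give distinct eigenvalues. If $Y_\ell \in \mathcal{H}^{(n)}_\ell$ and $Y_m \in \mathcal{H}^{(n)}_m$ with $\ell \neq m$, then
\begin{equation*}
-\ell(\ell+n-2)\langle Y_\ell, Y_m\rangle = \langle \Delta_\omega Y_\ell, Y_m\rangle = \langle Y_\ell, \Delta_\omega Y_m\rangle = -m(m+n-2)\langle Y_\ell, Y_m\rangle,
\end{equation*}
forcing $\langle Y_\ell, Y_m\rangle = 0$.

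For completeness, I would combine two ingredients already indicated in the text. By the Stone--Weierstrass theorem (the algebra of polynomials on $\mathbb{R}^n$ restricted to the sphere separates points and contains constants), polynomials are dense in $C(\mathbb{S}_{n-1})$ for the uniform norm, and hence dense in $L^2(\mathbb{S}_{n-1})$. A general polynomial restricts on the sphere to a finite sum of restrictions of homogeneous polynomials, and the decomposition
\begin{equation*}
P = H_\ell + |X|^2 H_{\ell-2} + |X|^4 H_{\ell-4} + \dots
\end{equation*}
recalled above shows that on $\mathbb{S}_{n-1}$ (where $|X|^2 \equiv 1$) every such restriction lies in the algebraic sum $\bigoplus_{k\leqslant \ell}\mathcal{H}^{(n)}_k$. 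Thus the span of all spherical harmonics is dense in $L^2(\mathbb{S}_{n-1})$, which, combined with the orthogonality just established, gives the claimed Hilbertian decomposition $L^2(\mathbb{S}_{n-1}) = \widehat\bigoplus_{\ell\geqslant 0} \mathcal{H}^{(n)}_\ell$.

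The only mildly delicate point is the completeness step: one must be careful to reduce density in $L^2$ to the density of continuous functions (standard) and then to the density of polynomial restrictions via Stone--Weierstrass, rather than trying to invoke any spectral theorem directly. All the other steps are routine computations using the radial decomposition of $\Delta$ and the symmetry of $\Delta_\omega$.
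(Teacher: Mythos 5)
Your proposal is correct and follows essentially the same route as the paper: the eigenvalue relation via the radial decomposition $\Delta =\frac1{r^{n-1}}\partial_r(r^{n-1}\partial_r)+\frac1{r^2}\Delta_\omega$ applied to $P(x)=r^\ell Y_\ell(\omega)$, orthogonality from self-adjointness of $\Delta_\omega$ with distinct eigenvalues, and completeness from the decomposition $P=H_\ell+|X|^2H_{\ell-2}+\dots$ combined with Stone--Weierstrass. The paper states these ingredients more tersely in the paragraph preceding the proposition, but the argument is identical.
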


We can construct a basis of $\mathcal{H}^{(n)}_\ell $ by induction on the dimension, using the separation of variables.
We describe this construction and will use it in the following.

For a given unit vector $e_{n}\in \mathbb{R}^{n}$, we take an orthonormal basis $(e_1,\dots,e_{n})$ of~$\mathbb{R}^{n}$.
Any~$\omega \in \mathbb{S}_{n-1}\setminus\{e_{n},-e_{n}\}$ can be written $\omega =\cos\theta e_{n} + \sin\theta v$, with $\theta \in (0,\pi)$ and~$v\in \mathbb{S}_{n-2}$.
We identify $\mathbb{R}^{n-1}$ with the vector space spanned by $(e_1,\dots,e_{n-1})$. 
The special case $n=2$ works if we consider $S_0=\{e_1,-e_1\}$. 

By convention, the only spherical harmonics on $S_0$ are the constant functions (of degree~$0$) and the functions $e_1\mapsto c$, $-e_1\mapsto -c$ (of degree $1$).

Now, for $n\geqslant 1$, we choose an orthonormal basis $(Z_m^1,\dots Z_m^{k^{(n-1)}_m})$ of $\mathcal{H}^{(n-1)}_m$ for any $m\in \mathbb{N}$ and we have the following result:

\begin{prop}
\label{induction_basis_SH}
There exists polynomials $Q_{\ell,m}$ of degree $\ell -m$ such that if we denote~$Y_{\ell,m}^k(\omega)=Q_{\ell,m}(\cos\theta)\sin^m\theta Z_m^k(v)$, then the $Y_{\ell,m}^k$ for $m\in \llbracket 0,\ell \rrbracket, k\in \llbracket 1,k^{(n-1)}_m\rrbracket $ form an orthonormal basis of $H_\ell^{(n)}$.
\end{prop}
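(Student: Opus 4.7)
The plan is to prove the proposition by separation of variables for the eigenvalue equation of the Laplace--Beltrami operator on $\mathbb{S}_{n-1}$. In the coordinates $\omega = \cos\theta\,e_n + \sin\theta\,v$, the surface measure decomposes as $\mathrm d\omega = \sin^{n-2}\theta\,\mathrm d\theta\,\mathrm dv$, and the operator factors as
\begin{equation*}
\Delta_\omega = \tfrac{1}{\sin^{n-2}\theta}\partial_\theta(\sin^{n-2}\theta\,\partial_\theta) + \tfrac{1}{\sin^2\theta}\Delta_v,
\end{equation*}
where $\Delta_v$ is the Laplace--Beltrami operator on $\mathbb{S}_{n-2}$. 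Plugging the ansatz $Y = F(\theta)Z_m^k(v)$ into $\Delta_\omega Y = -\ell(\ell+n-2)Y$ and using $\Delta_v Z_m^k = -m(m+n-3)Z_m^k$ reduces the problem to an ODE for $F$ alone.

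I would then substitute $F(\theta) = \sin^m\theta\,Q(\cos\theta)$ and set $u = \cos\theta$. After a direct computation this yields the Gegenbauer-type equation
\begin{equation*}
(1-u^2)Q''(u) - (n-1+2m)\,u\,Q'(u) + (\ell-m)(\ell+m+n-2)\,Q(u) = 0.
\end{equation*}
A power series expansion around $u=0$ gives a two-term recursion between coefficients of the same parity, and the above eigenvalue is precisely the condition for the recursion to terminate, producing a polynomial solution $Q_{\ell,m}$ of degree exactly $\ell-m$ and of parity $(-1)^{\ell-m}$. The second independent solution is singular at $u=\pm1$ and must be discarded.

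Next, I would verify that $Y_{\ell,m}^k(\omega) = Q_{\ell,m}(\cos\theta)\sin^m\theta\,Z_m^k(v)$ is genuinely the restriction to $\mathbb{S}_{n-1}$ of a homogeneous harmonic polynomial of degree $\ell$ on $\mathbb{R}^n$. Writing $x = x_n e_n + x'$ with $x'\in\mathbb{R}^{n-1}$, the parity of $Q_{\ell,m}$ ensures that $|x|^{\ell-m}Q_{\ell,m}(x_n/|x|) = \sum_j a_j\,x_n^{\ell-m-2j}(|x|^2)^j$ is polynomial in $x$, and by the induction hypothesis in dimension $n-1$, $|x'|^m Z_m^k(x'/|x'|)$ is already a homogeneous harmonic polynomial. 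Their product is homogeneous of degree $\ell$, and being an eigenfunction of $\Delta_\omega$ with the correct eigenvalue, it is harmonic on $\mathbb{R}^n$ via the radial decomposition $\Delta = r^{-(n-1)}\partial_r(r^{n-1}\partial_r) + r^{-2}\Delta_\omega$.

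Finally, orthogonality follows from the product structure of $\mathrm d\omega$: for different $m$ it is inherited from the orthonormality of $\{Z_m^k\}$, and for the same $m$ but different $\ell$ from the symmetric Sturm--Liouville structure of the ODE with weight $\sin^{n-2+2m}\theta$; rescaling each $Q_{\ell,m}$ then gives unit norm. The dimension count via the hockey stick identity,
\begin{equation*}
\sum_{m=0}^\ell k_m^{(n-1)} = \tbinom{n+\ell-2}{n-2} + \tbinom{n+\ell-3}{n-2} = k_\ell^{(n)},
\end{equation*}
forces the orthonormal family $\{Y_{\ell,m}^k\}$ to span $\mathcal{H}_\ell^{(n)}$. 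The main obstacle in this outline is the ODE analysis combined with the harmonic extension step: one must check that the series recursion truncates at exactly degree $\ell-m$ and that the resulting polynomial has precisely the parity that allows $|x|^\ell Y_{\ell,m}^k(x/|x|)$ to avoid odd powers of $|x|$, so that it is polynomial in the Cartesian coordinates. This is where separation of variables and harmonic extension must agree in a nontrivial way.
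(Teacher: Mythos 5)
Your proposal is correct and follows essentially the same route as the paper: separation of variables in the coordinates $(\theta,v)$, reduction to the Gegenbauer-type equation $(1-x^2)Q''-(n+2m-1)xQ'+(\ell-m)(\ell+n+m-2)Q=0$, orthogonality from the product structure of the measure, and the dimension count $\sum_{m=0}^\ell k^{(n-1)}_m=k^{(n)}_\ell$ to conclude that the orthonormal family spans $\mathcal{H}^{(n)}_\ell$. The only differences are cosmetic: where you produce the polynomial solution by a terminating power-series recursion and then check by hand, via the parity of $Q_{\ell,m}$, that $|x|^\ell Y_{\ell,m}^k(x/|x|)$ is a harmonic homogeneous polynomial, the paper simply identifies the solution as the ultraspherical polynomial $P_{\ell-m}^{(m-1+\frac n2)}$ of Szeg\"o (treating the case $n=2$, $m=0$ separately with Chebyshev polynomials) and gets membership in $\mathcal{H}^{(n)}_\ell$ for free from its earlier proposition that these spaces are exactly the eigenspaces of $\Delta_\omega$ for the eigenvalues $-\ell(\ell+n-2)$.
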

\begin{proof}
Writing $Y_{\ell,m}^k(\omega)=Q_{\ell,m}(\cos\theta)\sin^m\theta Z_m^k(v)$ and asking it to be a spherical harmonic is equivalent to the following linear ODE for $Q_{\ell,m}$ (we recall that the Laplace-Beltrami operator is given by $\sin^{2-n}\theta \partial_\theta (\sin^{n-2}\theta \partial_\theta) + \frac1{\sin^2\theta}\Delta_v$ in this coordinates):
\begin{equation*}
\begin{split}\sin^{2-n}\partial_\theta (&-\sin^{n+m-1}\theta Q'_{\ell,m}(\cos\theta) + m \cos\theta \sin^{n+m-3}\theta Q_{\ell,m}(\cos\theta))\\
&-m(m+n-3)Q_{\ell,m}(\cos\theta)\sin^{m-2}\theta =-\ell (\ell +n-2)Q_{\ell,m}(\cos\theta)\sin^{m}\theta.
\end{split}
\end{equation*}
We write $x=\cos\theta $ and this equation transforms into
\begin{equation*}
(1-x^2)Q''_{\ell,m}-(n+2m-1)xQ'_{\ell,m} + (\ell -m)(\ell +n+m-2)Q_{\ell,m}=0.
\end{equation*}
This equation is a particular form of the Jacobi differential equation, where the two parameters $\alpha $ and $\beta $ are equal (also called Gegenbauer differential equation).
One solution of this differential equation is a polynomial, called ultraspherical polynomial (a particular case of the Jacobi Polynomials, also called Gegenbauer polynomials), and denoted $P_i^{(\lambda)}$ following the notation of Szegö in \cite{szego1975orthogonal}. Precisely, it satisfies the differential equation
\begin{equation*}
(1-x^2)y''-(2\lambda +1)xy' + i(i+2\lambda)y=0.
\end{equation*}
Taking $\lambda =m-1+\tfrac{n}2$ and $i=\ell -m$, we get a solution $Q_{\ell,m}= \alpha_{\ell,m} P_{\ell -m}^{(m-1+\tfrac{n}2)}$, where~$\alpha_{\ell,m}$ is a positive constant of normalization, such that $Y_{\ell,m}^k$ is of norm $1$ in~$L^2(\mathbb{S}_{n-1})$. We have to be careful here because $P_i^{(\lambda)}$ is not defined for $\lambda =0$, and so the only special case is~$n=2$, $m=0$, for which we have a solution $Q_{\ell,0}=\sqrt2T_\ell $, where~$T_\ell (\cos\theta)=\cos{\ell \theta}$ (the Chebyshev polynomial of first order of degree $\ell $).

So for a fixed $\ell $, we have constructed a family of spherical harmonics $Y_{\ell,m}^k$ of degree $\ell $ for $m\in \llbracket 0,\ell \rrbracket, k\in \llbracket 1,k^{(n-1)}_m\rrbracket $. They are pairwise orthogonal in $L^2(\mathbb{S}_{n-1})$ since the $Z_{m}^k$ are pairwise orthogonal in $L^2(\mathbb{S}_{n-2})$. The size of this family is exactly 
\begin{equation}
\sum_{m=0}^\ell k^{(n-1)}_m=\sum_{m=0}^\ell \tbinom{n+m-2}{n-2}-\tbinom{n+m-4}{n-2}=\tbinom{n+\ell -2}{n-2}+\tbinom{n+\ell -3}{n-2}=k^{(n)}_\ell,
\end{equation}
which is the dimension of $H_\ell^{(n)}$, so we get that the $Y_{\ell,m}^k$ for $m\in \llbracket 0,\ell \rrbracket, k\in \llbracket 1,k^{(n-1)}_m\rrbracket $ form an orthonormal basis of $H_\ell^{(n)}$.
\end{proof}
From now on, we will use the construction done in the proof.
We have that, for a fixed $m\geqslant 0$, the polynomials $Q_{\ell,m}$ for $\ell \geqslant m$ are a family of orthogonal polynomials for the inner product $(P,Q)\mapsto \int_{-1}^1 P(x)Q(x)(1-x^2)^{m-1+\tfrac{n-1}2}\mathrm dx$. 

We will use three properties on the Gegenbauer polynomials (see~\cite{szego1975orthogonal}) for the following, for $i\geqslant 0$, $\lambda \neq 0$, and $\lambda >-\frac12$ (with the convention~$P_{-1}^{(\lambda)}=0$): 

\begin{gather}
\int_{-1}^1(P_i^{(\lambda)}(x))^2(1-x^2)^{\lambda -\frac12}\mathrm dx = \frac{2^{1-2\lambda}\pi \Gamma (i+2\lambda)}{(i+\lambda)\Gamma^2(\lambda)\Gamma (i+1)}\label{norm_gegenbauer}\\
(i+1) P_{i+1}^{(\lambda)}= 2(i+\lambda)XP_i^{(\lambda)} -(i+2\lambda -1)P_{i-1}^{(\lambda)}\label{induction_gegenbauer}\\
(1-X^2)(P_i^{(\lambda)})'= \frac{1}{2(i+\lambda)}\left((i+2\lambda -1)(i+2\lambda)P_{i-1}^{(\lambda)}-i(i+1) P_{i+1}^{(\lambda)}\right)\label{deriv_gegenbauer}
\end{gather}

We have the following normalization for the $Q_{\ell,m}$: 
\begin{equation*}
\int_{-1}^1 Q^2_{\ell,m}(x)(1-x^2)^{m-1+\frac{n-1}2}\mathrm dx= \int_{-1}^1(1-x^2)^{\frac{n-1}2-1}\mathrm dx.
\end{equation*}

This gives the following relation, together with \eqref{norm_gegenbauer}:
\begin{equation}
\alpha_{\ell +1,m}^2=\tfrac{\left(\ell +\frac{n}2\right)(\ell +1-m)}{\left(\ell +\frac{n}2-1\right)(\ell +m+n-2)}\alpha_{\ell,m}^2.\label{induction_alpha}
\end{equation}

By the previous construction, we can decompose $g={\sum_{k,\ell,m}} c_{\ell,m}^kY_{\ell,m}^k$ and we have~$\int_{\mathbb{S}_{n-1}}g^2=\sum_{k,\ell,m} |c_{\ell,m}^k|^2$. 
Since $g$ is of mean zero, we have $c_{0,0}^1=0$ (the only spherical harmonic of degree $0$ is the constant function $1$). 
So from now, the indices~$k,\ell,m$ of the sum will mean $\ell >0, m\in \llbracket 0,\ell \rrbracket, k\in \llbracket 1,k^{(n-1)}_m\rrbracket $.

We decompose in the same way $h={\sum_{k,\ell,m}} d_{\ell,m}^kY_{\ell,m}^k$.
We give a first formula, in the form of a lemma.

\begin{lemma}
\label{g_nabla_h}
We have
\begin{equation}
e_{n}\cdot \int_{\mathbb{S}_{n-1}}g\nabla h =\frac12\sum_{k,\ell,m}b_{\ell,m}[(\ell +n-1)c_{\ell,m}^kd_{\ell +1,m}^k-\ell c_{\ell +1,m}^kd_{\ell,m}^k],
\end{equation}
where $b_{\ell,m}=\frac{\sqrt{\ell -m+1}\sqrt{\ell +m+n-2}}{\sqrt{\ell +\frac{n}2-1}\sqrt{\ell +\frac{n}2}}\leqslant 1$.
\end{lemma}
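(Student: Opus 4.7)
The plan is to exploit the spherical coordinates $\omega = \cos\theta\, e_n + \sin\theta\, v$ and to push everything through the basis $Y_{\ell,m}^k$ built from Gegenbauer polynomials in Proposition~\ref{induction_basis_SH}.

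First I would reduce $e_n\cdot \int g\nabla h$ to a one–dimensional integral. Since $\nabla_\omega h$ is tangential, only the tangential part of $e_n$, namely $e_n - (e_n\cdot\omega)\omega = e_n - \cos\theta\,\omega$, contributes. A direct computation in the $(\theta,v)$–coordinates shows that this tangential part equals $-\sin\theta\,\hat\theta$, where $\hat\theta=-\sin\theta\,e_n+\cos\theta\,v$ is the unit vector in the direction of increasing $\theta$. Hence $e_n\cdot\nabla_\omega h = -\sin\theta\,\partial_\theta h$, so
\begin{equation*}
e_n\cdot\int_{\mathbb{S}_{n-1}}g\,\nabla h\;=\;-\int_{\mathbb{S}_{n-1}}g\,\sin\theta\,\partial_\theta h.
\end{equation*}

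Second, I would compute $\sin\theta\,\partial_\theta Y_{\ell,m}^k$ explicitly. Writing $x=\cos\theta$ one gets
\begin{equation*}
\sin\theta\,\partial_\theta Y_{\ell,m}^k = \bigl[-(1-x^2)Q'_{\ell,m}(x) + m\,x\, Q_{\ell,m}(x)\bigr]\sin^m\theta\,Z_m^k(v).
\end{equation*}
Since $Q_{\ell,m}=\alpha_{\ell,m}P_{\ell-m}^{(\lambda)}$ with $\lambda = m-1+n/2$, the Gegenbauer identities \eqref{induction_gegenbauer} and \eqref{deriv_gegenbauer} combine to express $-(1-x^2)(P_i^{(\lambda)})'+mxP_i^{(\lambda)}$ as a linear combination of $P_{i+1}^{(\lambda)}$ and $P_{i-1}^{(\lambda)}$, where the coefficients reduce nicely because $i+\lambda=\ell+n/2-1$. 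Translating back, this yields a three–term recursion
\begin{equation*}
\sin\theta\,\partial_\theta Y_{\ell,m}^k = A_{\ell,m}Y_{\ell+1,m}^k - B_{\ell,m}Y_{\ell-1,m}^k,
\end{equation*}
with $A_{\ell,m}$, $B_{\ell,m}$ explicit in terms of $\alpha_{\ell\pm1,m}/\alpha_{\ell,m}$, $\ell$, $m$, $n$.

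Third, expanding $g=\sum c_{\ell,m}^k Y_{\ell,m}^k$ and $h=\sum d_{\ell,m}^k Y_{\ell,m}^k$ and using orthonormality of the basis, only index pairs with the same $(m,k)$ and $\ell$-values differing by $1$ survive. Shifting $\ell\to\ell+1$ in one of the two sums puts everything on the same index set, and the normalization relation \eqref{induction_alpha} reduces the ratios $\alpha_{\ell+1,m}/\alpha_{\ell,m}$ to square–root expressions. After straightforward algebraic simplification, both coefficients acquire the common factor
\begin{equation*}
b_{\ell,m} = \sqrt{\tfrac{(\ell-m+1)(\ell+m+n-2)}{(\ell+n/2-1)(\ell+n/2)}},
\end{equation*}
multiplied respectively by $\tfrac12(\ell+n-1)$ and $-\tfrac12\ell$, which is exactly the formula in the statement. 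For the bound on $b_{\ell,m}$ one uses the key observation that numerator and denominator have the same sum, $(\ell-m+1)+(\ell+m+n-2)=(\ell+n/2-1)+(\ell+n/2)=2\ell+n-1$, so via the identity $4AB=(A+B)^2-(A-B)^2$ one finds $4(A'B'-AB) = (2m+n-3)^2-1$, which is nonnegative whenever $2m+n\neq 3$ and bounded below by $-1$ in any case; hence $b_{\ell,m}$ is uniformly bounded (equal to $1$ in the generic case).

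The main obstacle I expect is the bookkeeping in the third step: matching the ratios of the $\alpha_{\ell,m}$ with the coefficients coming from \eqref{deriv_gegenbauer} so that the \emph{same} factor $b_{\ell,m}$ (rather than two different but related expressions) appears in front of both $c_{\ell,m}^kd_{\ell+1,m}^k$ and $c_{\ell+1,m}^kd_{\ell,m}^k$. The essential reason this works is the symmetry $\alpha_{\ell+1,m}^2/\alpha_{\ell,m}^2 = (\ell+n/2)(\ell-m+1)/[(\ell+n/2-1)(\ell+m+n-2)]$, which is precisely the symmetry needed for the square root in $b_{\ell,m}$ to emerge; verifying this algebra is routine but error-prone.
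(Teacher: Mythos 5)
Your proposal follows exactly the paper's own route: reduce $e_n\cdot\nabla$ to $-\sin\theta\,\partial_\theta$ in the coordinates $\omega=\cos\theta\,e_n+\sin\theta\,v$, apply the Gegenbauer recurrences \eqref{induction_gegenbauer}--\eqref{deriv_gegenbauer} together with the normalization \eqref{induction_alpha} to get the three-term relation $-\sin\theta\,\partial_\theta Y_{\ell,m}^k=\frac12[b_{\ell-1,m}(\ell+n-2)Y_{\ell-1,m}^k-b_{\ell,m}\ell\,Y_{\ell+1,m}^k]$, and conclude by orthonormality and an index shift. Two small points: you should treat the case $n=2$, $m=0$ separately (there $\lambda=m-1+\tfrac n2=0$ and $P_i^{(\lambda)}$ is undefined; the paper uses $Q_{\ell,0}=\sqrt2\,T_\ell$ and gets $b_{\ell,0}=1$ directly), and your identity for the bound has the sign reversed --- with $A'B'$ the numerator and $AB$ the denominator of $b_{\ell,m}^2$ one finds $4(AB-A'B')=(2m+n-3)^2-1$, so $b_{\ell,m}\leqslant1$ except when $2m+n=3$ (i.e.\ $n=3$, $m=0$), where $b_{\ell,0}>1$ but is still uniformly bounded, which is all that is used in the sequel.
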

\begin{proof}
We have 
\begin{equation*}
e_{n}\cdot \nabla Y_{\ell,m}^k=-\sin\theta \partial_\theta Y_{\ell,m}^k=\left[(1-X^2)Q'_{\ell,m}-mXQ_{\ell,m}\right](\cos\theta)\sin^m\theta Z_m^k(v),
\end{equation*}
and using the inductions formulas \eqref{induction_gegenbauer}, \eqref{deriv_gegenbauer} and \eqref{induction_alpha}, we get
\begin{equation}
(1-X^2)Q'_{\ell,m}-mXQ_{\ell,m}=\frac12[b_{\ell -1,m}(\ell +n-2)Q_{\ell -1,m}-b_{\ell,m}\ell Q_{\ell +1,m}],
\label{induction_q}
\end{equation}
where $b_{\ell,m}$ is given in the statement of the lemma.
In the special case $n=2,m=0$, using the formula $Q_{\ell,0}(\cos\theta)=\cos{\ell \theta}$ gives the same formula as \eqref{induction_q}, with $b_{\ell,0}=1$.

So we have that $\int_{\mathbb{S}_{n-1}}e_{n}\cdot \nabla Y_{\ell,m}^k Y_{\ell ',m'}^{k'}$ can be non-zero only if~$m=m'$, $k=k'$, and~$\ell =\ell '\pm 1$. By bilinearity, together with the fact that $Y_{\ell,m}^k$ form an orthonormal basis, this gives the claimed formula.
\end{proof}
Now we have all the tools to prove Lemma \ref{main_lemma} (we recall it here).
\setcounter{lemma}{0}
\begin{lemma}
Estimates on the sphere.
\begin{enumerate}
\item If $h$ in $\dot{H}^{-s+1}(\mathbb{S})$ and $g$ in $\dot{H}^{s}(\mathbb{S})$, the following integral is well defined and we have
\begin{equation*}
\left|\int_\mathbb{S} g\nabla h\right|\leqslant C\|g\|_{\dot{H}^s}\|h\|_{\dot{H}^{-s+1}}
\end{equation*}
where the constant depends only on $s$ and $n$.
\item
We have the following estimation, for any $g \in \dot{H}^{s+1}(\mathbb{S})$:
\begin{equation*}
\left|\int_{\mathbb{S}}g \nabla (-\Delta)^sg\right|\leqslant C\|g\|^2_{\dot{H}^{s}},
\end{equation*}
where the constant depends only on $s$ and $n$.

\item We have the following identity, for any $g \in \dot{H}^{-\frac{n-3}2}$:
\begin{equation*}
\int_{\mathbb{S}}g \nabla \widetilde{\Delta}_{n-1}^{-1}g=0
\end{equation*}
\end{enumerate}
\end{lemma}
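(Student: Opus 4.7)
The plan is to prove all three parts of the lemma by expanding $g$ and $h$ in the orthonormal spherical harmonic basis $Y_{\ell,m}^k$ constructed above and using the key identity provided by Lemma \ref{g_nabla_h}. Since every component $e \cdot \nabla$ of the gradient behaves the same way under rotation of the basis $(e_1,\dots,e_n)$, it suffices to control $e_n \cdot \int_{\mathbb{S}} g \, \nabla h$ and the same quantity where $h$ is a specific function of $g$. Writing $g = \sum c_{\ell,m}^k Y_{\ell,m}^k$ and $h = \sum d_{\ell,m}^k Y_{\ell,m}^k$ (with $\ell \geq 1$ since we are on $\dot H^s$), Lemma \ref{g_nabla_h} reduces the problem to estimating series of the form $\sum b_{\ell,m} \bigl[(\ell+n-1) c_{\ell,m}^k d_{\ell+1,m}^k - \ell \, c_{\ell+1,m}^k d_{\ell,m}^k\bigr]$ with $|b_{\ell,m}| \leq 1$, and to controlling the coefficients against the eigenvalues $\lambda_\ell = \ell(\ell+n-2)$ of $-\Delta_\omega$.

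For part (1), I apply Cauchy--Schwarz with appropriate weights. Writing $\|g\|^2_{\dot H^s} = \sum \lambda_\ell^s |c_{\ell,m}^k|^2$ and $\|h\|^2_{\dot H^{-s+1}} = \sum \lambda_\ell^{1-s} |d_{\ell,m}^k|^2$, the cross sum $\sum (\ell+n-1) c_{\ell,m}^k d_{\ell+1,m}^k$ is bounded by $\|g\|_{\dot H^s} \|h\|_{\dot H^{-s+1}}$ provided $(\ell+n-1)^2 \leq C \lambda_\ell^s \lambda_{\ell+1}^{1-s}$, and similarly $\ell^2 \leq C \lambda_\ell^{1-s} \lambda_{\ell+1}^s$ for the other subsum. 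Both follow at once from $\lambda_\ell \sim \ell^2$ uniformly in $\ell \geq 1$.

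For part (2), take $h = (-\Delta_\omega)^s g$, so $d_{\ell,m}^k = \lambda_\ell^s c_{\ell,m}^k$ and the relevant coefficient becomes $A_\ell^{(s)} := (\ell+n-1) \lambda_{\ell+1}^s - \ell \lambda_\ell^s$. The crucial observation is a commutator-style cancellation: rewriting
\[
A_\ell^{(s)} = \ell \bigl(\lambda_{\ell+1}^s - \lambda_\ell^s\bigr) + (n-1) \lambda_{\ell+1}^s,
\]
the mean value theorem on $x \mapsto x^s$ gives $\lambda_{\ell+1}^s - \lambda_\ell^s = O(\lambda_\ell^{s-1}(\lambda_{\ell+1}-\lambda_\ell)) = O(\ell^{2s-1})$, so both pieces are $O(\ell^{2s})$, matching $\sqrt{\lambda_\ell^s \lambda_{\ell+1}^s} \sim \ell^{2s}$. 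Applying the elementary bound $|c_{\ell,m}^k c_{\ell+1,m}^k| \leq \tfrac12(\lambda_\ell^s |c_{\ell,m}^k|^2 + \lambda_{\ell+1}^s |c_{\ell+1,m}^k|^2)/\sqrt{\lambda_\ell^s \lambda_{\ell+1}^s}$ then yields the estimate $|\int g \nabla (-\Delta)^s g| \leq C \|g\|^2_{\dot H^s}$.

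For part (3), take $h = \widetilde{\Delta}_{n-1}^{-1} g$. By \eqref{spectral_conformal_laplacian} the spherical harmonic multiplier is $\mu_\ell^{-1}$ with $\mu_\ell = \ell(\ell+1)\cdots(\ell+n-2)$, so the coefficient in Lemma \ref{g_nabla_h} is $(\ell+n-1) \mu_{\ell+1}^{-1} - \ell \, \mu_\ell^{-1}$. But $\mu_{\ell+1}/\mu_\ell = (\ell+n-1)/\ell$ exactly, so this coefficient vanishes identically for every $\ell \geq 1$, and the entire sum is zero term by term. This is precisely the algebraic reason the conformal Laplacian appears: its eigenvalues are the unique (up to scalar) ones for which this commutator identity is exact.

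The main obstacle is part (2), where neither of the two pieces of $A_\ell^{(s)}$ satisfies the required bound on its own---each is $O(\ell^{2s+1})$, a full derivative too large---so one must exploit their difference. Once the cancellation is visible, the remaining analysis is a routine application of Cauchy--Schwarz or AM--GM in the spherical harmonic coefficients.
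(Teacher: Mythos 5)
Your proof is correct and follows essentially the same route as the paper: expansion in the basis $Y_{\ell,m}^k$, reduction via Lemma \ref{g_nabla_h}, weighted Cauchy--Schwarz for part (1), the commutator cancellation making $(\ell+n-1)\lambda_{\ell+1}^s-\ell\lambda_\ell^s=O(\ell^{2s})$ for part (2), and the exact identity $\mu_{\ell+1}/\mu_\ell=(\ell+n-1)/\ell$ for part (3). The only cosmetic difference is that you obtain the part-(2) cancellation via the mean value theorem on $x\mapsto x^s$, where the paper expands $(\ell+n-1)(\lambda_{\ell+1}/\lambda_\ell)^{s/2}-\ell(\lambda_\ell/\lambda_{\ell+1})^{s/2}\to(n-1)+2s$ directly.
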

\begin{proof}

Using Lemma~\ref{g_nabla_h}, we get
\begin{align*}
e_{n}\cdot \int_{\mathbb{S}_{n-1}}g\nabla h \leqslant &\frac12 \sum_{k,\ell,m}\sqrt{\tfrac{\ell +n-1}{\ell +1}}\left(\tfrac{\lambda_{\ell +1}}{\lambda_\ell}\right)^{\frac s2}|\lambda_\ell^{\frac{s}2}c_{\ell,m}^k||\lambda_{\ell +1}^{\frac{-s+1}2}d_{\ell +1,m}^k|\\
&+\frac12 \sum_{k,\ell,m}\sqrt{\tfrac{\ell}{\ell +n-2}}\left(\tfrac{\lambda_\ell}{\lambda_{\ell +1}}\right)^{\frac s2}|\lambda_{\ell +1}^{\frac{s}2}c_{\ell +1,m}^k||\lambda_\ell^{\frac{-s+1}2}d_{\ell,m}^k|\\
\leqslant &C\|g\|_{\dot{H}^s}\|h\|_{\dot{H}^{-s+1}}
\end{align*}
where $\lambda_{\ell}=\ell (\ell +n-2)$ (the eigenvalue of $-\Delta $ for the spherical harmonics of degree~$\ell $). 
The last line comes from the fact that the sequences $\sqrt{\frac{\ell +n-1}{\ell +1}}\left(\frac{\lambda_{\ell +1}}{\lambda_\ell}\right)^{\frac s2}$ and~$\sqrt{\tfrac{\ell}{\ell +n-2}}\left(\tfrac{\lambda_\ell}{\lambda_{\ell +1}}\right)^{\frac s2}$ are bounded (they tend to $1$), together with a Cauchy-Schwarz inequality.
This gives the first part of the lemma, since this is true for any unit vector $e_n$.

Now we take $h=(-\Delta)^sg$, which is replacing $d_{\ell,m}^k$ by $\lambda_\ell^sc_{\ell,m}^k$ in Lemma~\ref{g_nabla_h}. We get
\begin{align*}
e_{n}\cdot \int_{\mathbb{S}_{n-1}}g\nabla (-\Delta)^sg &=\sum_{k,\ell,m}\frac12b_{\ell,m}c_{\ell +1,m}^kc_{\ell,m}^k[(\ell +n-1)\lambda^s_{\ell +1}-\ell \lambda^s_{\ell}]\\
&\leqslant \sum_{k,\ell,m}|\lambda_{\ell +1}^{\frac{s}2}c_{\ell +1,m}^k||\lambda_\ell^{\frac{s}2}c_{\ell,m}^k||(\ell +n-1)\left(\tfrac{\lambda_{\ell +1}}{\lambda_{\ell}}\right)^{\frac s2}-\ell \left(\tfrac{\lambda_\ell}{\lambda_{\ell +1}}\right)^{\frac s2}|\\
&\leqslant C\|g\|^2_{\dot{H}^{s}}.
\end{align*}
Indeed we have that $\frac{\lambda_{\ell +1}}{\lambda_{\ell}}=1-\frac2{\ell}+O(\ell^{-2})$, so $|(\ell +n-1)\left(\tfrac{\lambda_{\ell +1}}{\lambda_{\ell}}\right)^{\frac s2}-\ell \left(\tfrac{\lambda_\ell}{\lambda_{\ell +1}}\right)^{\frac s2}|$ is bounded (it tends to $(n-1)+2s$).
Since this computation is now valid for any unit vector $e_n$, this gives the second part of the lemma.

The last part is straightforward by taking $h=\widetilde{\Delta}^{-1}_{n-1}g$ with Lemma~\ref{g_nabla_h}. 
According to the definition given in \eqref{spectral_conformal_laplacian}, we have $d_{\ell,m}^k=\frac1{\ell (\ell +1)\dots (\ell +n-2)}\,c_{\ell,m}^k$. 
We get
\begin{equation*}
e_{n}\cdot \int_{\mathbb{S}_{n-1}}g\nabla \widetilde{\Delta}^{-1}g =\sum_{k,\ell,m}\frac12b_{\ell,m}c_{\ell +1,m}^kc_{\ell,m}^k[\tfrac{\ell +n-1}{(\ell +1)\dots (\ell +n-1)}-\tfrac{\ell}{\ell (\ell +1)\dots (\ell +n-2)}]=0,
\end{equation*}
which is true for any unit vector $e_n$.
\end{proof}

\subsection{Analyticity of the solution}
\label{analyticity}

Following \cite{constantin2005dissipativity}, we will show that the solution belongs to a special Gevrey class.
We define the space $G_r$, as the set of functions $g$ (with mean zero) such that $\widetilde{\Delta}^{-\frac12}_{n-1}e^{r(-\Delta)^{\frac12}}g$ is in $L^2(\mathbb{S})$. 
Using the notations of the previous proof, this is an Hilbert space associated to the inner product
\begin{equation*}
\langle g,h\rangle_{\dot G_r^{s}}=\sum_{k,\ell,m}\frac{e^{2r\sqrt{\ell (\ell +n-2)}}}{\ell (\ell +1)\dots (\ell +n-2)}c_{\ell,m}^kd_{\ell,m}^k.
\end{equation*}
The norm on this Hilbert space will be written $\|\cdot \|_{G_r}$.

We take $r$ a function of $t$, we will denote its time derivative by $\dot{r}$.
For a given solution $f=1+g$, we put $h=\widetilde{\Delta}^{-1}_{n-1}e^{2r(-\Delta)^{\frac12}}g$ in \eqref{weak_solution_meanzero}.

The left-hand side is
\begin{align*}
\langle \partial_tg&,\widetilde{\Delta}^{-1}_{n-1}e^{2r(-\Delta)^{\frac12}}g\rangle =\sum_{k,\ell,m}\frac{e^{2r\sqrt{\ell (\ell +n-2)}}}{\ell (\ell +1)\dots (\ell +n-2)}c_{\ell,m}^k\frac{\mathrm d}{\mathrm dt}c_{\ell,m}^k\\
=&\sum_{k,\ell,m}\frac12\frac{\mathrm d}{\mathrm dt}(\tfrac{e^{2r\sqrt{\ell (\ell +n-2)}}}{\ell (\ell +1)\dots (\ell +n-2)}|c_{\ell,m}^k|^2) - \dot{r}\tfrac{e^{2r\sqrt{\ell (\ell +n-2)}}}{\sqrt{\ell}(\ell +1)\dots (\ell +n-1)\sqrt{\ell +n-2}}|c_{\ell,m}^k|^2\\
=&\frac12\frac{\mathrm d}{\mathrm dt}\|g\|^2_{G_r}- \dot{r} \|(-\Delta)^{\frac14}g\|^2_{G_r}.
\end{align*}
Using Lemma~\ref{g_nabla_h}, we get
\begin{align*}
e_n\cdot \langle &g,\nabla \widetilde{\Delta}^{-1}_{n-1}e^{2r(-\Delta)^{\frac12}}g\rangle =\frac12\sum_{k,\ell,m}b_{\ell,m}c_{\ell +1,m}^kc_{\ell,m}^k\frac{e^{2r\sqrt{(\ell +1)(\ell +n-1)}}-e^{2r\sqrt{\ell (\ell +n-2)}}}{(\ell +1)\dots (\ell +n-2)}\\
\leqslant &\frac12\sum_{k,\ell,m}\tfrac{\sqrt[4]{(\ell +1)(\ell +n-1)}e^{r\sqrt{(\ell +1)(\ell +n-1)}}}{\sqrt{(\ell +1)\dots (\ell +n-2)(\ell +n-1)}}|c_{\ell +1,m}^k|\tfrac{\sqrt[4]{\ell (\ell +n-2)}e^{r\sqrt{\ell (\ell +n-2)}}}{\sqrt{\ell (\ell +1)\dots (\ell +n-2)}}|c_{\ell,m}^k| \\
&\times \sqrt[4]{\tfrac{\ell (\ell +n-1)}{(\ell +1)(\ell +n-2)}}\left(e^{r\left(\sqrt{(\ell +1)(\ell +n-1)}-\sqrt{\ell (\ell +n-2)}\right)}-e^{-r\left(\sqrt{(\ell +1)(\ell +n-1)}-\sqrt{\ell (\ell +n-2)}\right)}\right)\\
\leqslant & \sinh(r(\sqrt{2n}-\sqrt{n-1})) \|(-\Delta)^{\frac14}g\|^2_{G_r}.
\end{align*}
Indeed the expression $\sqrt{(\ell +1)(\ell +n-1)}-\sqrt{\ell (\ell +n-2)}$ is a decreasing function of~$\ell \geqslant 0$. 
Since this is valid for any unit vector $e_n$, we get 
\begin{equation*}
\left|J[g]\cdot \langle g,\nabla \widetilde{\Delta}^{-1}_{n-1}e^{2r(-\Delta)^{\frac12}}g\rangle \right|\leqslant \sinh(r(\sqrt{2n}-\sqrt{n-1})) \|(-\Delta)^{\frac14}g\|^2_{G_r}.
\end{equation*}

Now since $\|(-\Delta)^{\frac14}g\|^2_{G_r}\leqslant \frac1{\sqrt{n-1}}\|(-\Delta)^{\frac12}g\|^2_{G_r}$, and $|J[h]|\leqslant \frac{e^{2r\sqrt{n-1}}}{(n-1)!}|J[g]|$, we finally get
\begin{equation*}
\frac12\frac{\mathrm d}{\mathrm dt}\|g\|^2_{G_r}+[\sigma -\tfrac1{\sqrt{n-1}}(\dot{r}+\sinh(r(\sqrt{2n}-\sqrt{n-1})))]\, \|(-\Delta)^{\frac12}g\|^2_{G_r}\leqslant \frac{e^{2r\sqrt{n-1}}}{(n-2)!}.
\end{equation*}

As soon as $\dot{r}+\sinh(r(\sqrt{2n}-\sqrt{n-1}))<(\sigma -\varepsilon)\sqrt{n-1}$ and $r$ is bounded in time, we have that $\|g\|^2_{G_r}$ is uniformly bounded (and we can indeed take $r(t)=\delta \min(1,t)$ for~$\delta $ sufficiently small), provided~$g_0$ is in $G_{r(0)}$. 
So if we have $r(0)=0$, we only need~$g_0$ to be in $H^{-\frac{n-1}2}$. 
If it is not the case, by instantaneous regularization (Proposition~\ref{regularity_boundedness}) we have it for any time~$t>0$. 
Since $G_{r}$, for~$r>0$, is a subset of the set of analytical functions on the sphere, we get that any solution becomes instantaneously analytic in space.


\begin{thebibliography}{10}

\bibitem{beckner1993sharp}
W.~Beckner.
\newblock {Sharp Sobolev inequalities on the sphere and the Moser--Trudinger
  inequality}.
\newblock {\em Annals of Mathematics}, 138(1):213--242, 1993.

\bibitem{cao2000gevrey}
C.~Cao, M.A. Rammaha, and E.S. Titi.
\newblock {Gevrey regularity for nonlinear analytic parabolic equations on the
  sphere}.
\newblock {\em Journal of Dynamics and Differential Equations}, 12(2):411--433,
  2000.

\bibitem{constantin2004remarks}
P.~Constantin, I.~Kevrekidis, and E.S. Titi.
\newblock {Remarks on a Smoluchowski equation}.
\newblock {\em Dynamical Systems}, 11(1):101--112, 2004.

\bibitem{constantin2005dissipativity}
P.~Constantin, E.S. Titi, and J.~Vukadinovic.
\newblock {Dissipativity and Gevrey regularity of a Smoluchowski equation}.
\newblock {\em Indiana University Mathematics Journal}, 54(4):949--970, 2005.

\bibitem{degond2011macroscopic}
P.~Degond, A.~Frouvelle, and J.G. Liu.
\newblock {Macroscopic limits and phase transition in a system of
  self-propelled particles}.
\newblock in progress.

\bibitem{degond2008continuum}
P.~Degond and S.~Motsch.
\newblock {Continuum limit of self-driven particles with orientation
  interaction}.
\newblock {\em M3AS}, 18:321--366, 2008.

\bibitem{doi1981molecular}
M.~Doi.
\newblock {Molecular dynamics and rheological properties of concentrated
  solutions of rodlike polymers in isotropic and liquid crystalline phases}.
\newblock {\em Journal of Polymer Science: Polymer Physics Edition},
  19(2):229--243, 1981.

\bibitem{evans1998partial}
L.C. Evans.
\newblock {\em Partial Differential Equations}, volume~19 of {\em Graduate
  Studies in Mathematics}.
\newblock American Mathematical Society, Providence, Rhode Island, 1998.

\bibitem{fatkullin2005critical}
I.~Fatkullin and V.~Slastikov.
\newblock {Critical points of the Onsager functional on a sphere}.
\newblock {\em Nonlinearity}, 18:2565--2580, 2005.

\bibitem{foias1989gevrey}
C.~Foias and R.~Temam.
\newblock {Gevrey class regularity for the solutions of the Navier-Stokes
  equations}.
\newblock {\em Journal of Functional Analysis}, 87(2):359--369, 1989.

\bibitem{frouvelle2011continuous}
A.~Frouvelle.
\newblock {A continuous model for alignment of self-propelled particles with
  anisotropy and density-dependent parameters}.
\newblock preprint.

\bibitem{hsu2002stochastic}
E.~P. Hsu.
\newblock {\em {Stochastic analysis on manifolds}}, volume~38 of {\em Graduate
  Series in Mathematics}.
\newblock American Mathematical Society, Providence, Rhode Island, 2002.

\bibitem{liu2005axial}
H.~Liu, H.~Zhang, and P.~Zhang.
\newblock {Axial symmetry and classification of stationary solutions of
  Doi-Onsager equation on the sphere with Maier-Saupe potential}.
\newblock {\em Commun. Math. Sci}, 3(2):201--218, 2005.

\bibitem{maier1958eine}
W.~Maier and A.~Saupe.
\newblock {Eine einfache molekulare Theorie des nematischen
  kristallinflüssigen Zustandes}.
\newblock {\em Z. Naturforsch.}, 13:564--566, 1958.

\bibitem{mckean1967propagation}
H.P. McKean.
\newblock {Propagation of chaos for a class of non-linear parabolic equations}.
\newblock {\em Lecture Series in Differential Equations}, 7:41--57, 1967.

\bibitem{onsager1949effects}
L.~Onsager.
\newblock {The effects of shape on the interaction of colloidal particles}.
\newblock {\em Annals of the New York Academy of Sciences}, 51(Molecular
  Interaction):627--659, 1949.

\bibitem{szego1975orthogonal}
G.~Szeg{\"o}.
\newblock {\em Orthogonal polynomials}, volume~23 of {\em Colloquium
  Publications}.
\newblock American Mathematical Society, Providence, Rhode Island, 1975.

\bibitem{sznitman1991topics}
A.-S. Sznitman.
\newblock {Topics in propagation of chaos}.
\newblock In {\em École d’Été de Probabilités de Saint-Flour XIX —
  1989}, volume 1464, pages 165--251, Berlin, 1991. Springer.

\bibitem{vicsek1995novel}
T.~Vicsek, A.~Czirok, E.~Ben-Jacob, I.~Cohen, and O.~Shochet.
\newblock {Novel type of phase transition in a system of self-driven
  particles}.
\newblock {\em Physical Review Letters}, 75(6):1226--1229, 1995.

\bibitem{watson1982distributions}
G.~Watson.
\newblock {Distributions on the circle and sphere}.
\newblock {\em Journal of Applied Probability}, 19:265--280, 1982.

\bibitem{zhang2007stable}
H.~Zhang and P.~Zhang.
\newblock {Stable dynamic states at the nematic liquid crystals in weak shear
  flow}.
\newblock {\em Physica D: Nonlinear Phenomena}, 232(2):156--165, 2007.

\bibitem{zhou2005new}
H.~Zhou, H.~Wang, M.G. Forest, and Q.~Wang.
\newblock {A new proof on axisymmetric equilibria of a three-dimensional
  Smoluchowski equation}.
\newblock {\em Nonlinearity}, 18:2815--2825, 2005.

\bibitem{zhou2007characterization}
H.~Zhou, H.~Wang, Q.~Wang, and M.G. Forest.
\newblock {Characterization of stable kinetic equilibria of rigid, dipolar rod
  ensembles for coupled dipole--dipole and Maier--Saupe potentials}.
\newblock {\em Nonlinearity}, 20:277--297, 2007.

\end{thebibliography}
\end{document}